\documentclass[a4paper,12pt]{article}

\usepackage{amsmath}
\usepackage{amsthm}
\usepackage{amssymb}
\usepackage[latin1]{inputenc}
\usepackage[english]{babel}
\usepackage{tikz}
\usetikzlibrary{shapes,arrows,positioning}
\usepackage{hyperref}

\theoremstyle{plain}
\newtheorem{thm}{Theorem}[section]
\newtheorem{prop}[thm]{Proposition}
\newtheorem{lem}[thm]{Lemma}
\newtheorem{cor}[thm]{Corollary}
\newtheorem{que}[thm]{Question}

\theoremstyle{definition}
\newtheorem{defn}[thm]{Definition}
\theoremstyle{remark}

\newtheorem{rem}[thm]{Remark}

\DeclareMathOperator{\ad}{ad}
\DeclareMathOperator{\Span}{span}

\DeclareMathOperator{\tr}{tr}

\DeclareMathOperator{\rk}{rk}
\DeclareMathOperator{\card}{card}

\DeclareMathOperator{\End}{End}

\title{Almost K\"ahler geometry of adjoint orbits of semisimple Lie groups}
\author{Alberto Della Vedova\footnote{Universit\`a di Milano - Bicocca, alberto.dellavedova@unimib.it}, Alice Gatti\footnote{Universit\`a di Pavia, alice.gatti@unimib.it}}

\begin{document}
\newcommand{\lrar}[1]{\langle #1 \rangle}
\newcommand{\lrbr}[1]{\lbrace #1 \rbrace}
\newcommand{\norm}[1]{\lvert\lvert #1 \rvert\rvert}
\maketitle

\abstract{We study the almost K\"ahler geometry of adjoint orbits of non-compact real semisimple Lie groups endowed with the Kirillov-Kostant-Souriau symplectic form and a canonically defined almost complex structure.
We give explicit formulas for the Chern-Ricci form, the Hermitian scalar curvature and the Nijenhuis tensor in terms of root data. We also discuss when the Chern-Ricci form is a multiple of the symplectic form, and when compact quotients of these orbits are of K\"ahler type.}

\section{Introduction}

The aim of this paper is to study the geometry of a canonically defined almost K\"ahler structure on adjoint orbits of real semisimple Lie groups.
While our approach covers the extensively studied case where the group $G$ is compact (see e.g. \cite[Chapter 8]{Besse1987} and references therein), we focus on the case where $G$ is a non-compact real semisimple Lie group.
Denoted by $\mathfrak g$ the Lie algebra of $G$, our main object of study will be an adjoint orbit $G/V$ of an element $v \in \mathfrak g$ with compact stabilizer $V \subset G$.
By this assumption on the stabilizer, the orbit $G/V$ is never compact.
However the reader interested in compact almost K\"ahler manifolds can eventually get plenty of such manifolds by modding out by the action of some uniform lattice of $G$.

By semisemplicity assumption on $G$, the orbit $G/V$ is canonically isomorphic to a co-adjoint orbit and, as such, it is equipped with the Kirillov-Kostant-Souriau symplectic form $\omega$.
Moreover, among all existing compatible almost complex structures on $(G/V,\omega)$, we consider a homogeneous $J$ that can be canonically defined in terms of root data of $\mathfrak g$ (see \cite[Section 4.2]{DellaVedova2017}, or \cite{AlekseevskyPodesta2018} for an alternate description).
Such a $J$ is independent of $v$, in some sense that will be apparent. 
This makes very easy to show that for any $w \in \mathfrak g$ with stabilizer $W \subset V$, the canonical projection $G/W \to G/V$ is pseudo-holomorphic (see Proposition \ref{prop:pseudoholo}).
This should be compared with the more common Hodge-theoretic `non-classical' situation where one considers $G/W$ and $G/V$ endowed with complex structures and the projection $G/W \to G/V$ may fail to be holomorphic \cite{GriffithsRoblesToledo2014}.

Regarding the non-compact homogeneous almost K\"ahler manifold $(G/V,\omega,J)$, a number of questions are naturally posed: Is $J$ integrable? What is the Hermitian scalar curvature of $J$? Denoting by $\rho$ the Chern-Ricci form of $J$, does it hold $\rho = \lambda \omega$ for some constant $\lambda$? If not, is it possible to move $v$ inside $\mathfrak g$ in order to satisfy that equation? Given $G$, is it possible to classify all such $v$'s?
Given a discrete torsion-free subgroup $\Gamma \subset G$, does the quotient symplectic manifold $(\Gamma\backslash G/V,\omega_\Gamma)$ admit any integrable compatible almost complex structure?

We shall give answers to these questions showing that the almost K\"ahler grometry of the orbit $(G/V,\omega,J)$ is governed by a certain element $\varphi$ which belongs to the dominant Weyl chamber $C$ of a suitably chosen positive root system $\Delta^+$ for $\mathfrak g$.
In simple words, denoted by $\ell$ the rank of $\mathfrak g$, all the geometry of $(G/V,\omega,J)$ is encoded in the euclidean geometry of a non-zero vector $\varphi$ belonging to a convex cone $C$ in the $\ell$-dimensional euclidean vector space, and a bunch of other vectors (corresponding to elements of $\Delta^+$).

In this perspective one can expect that giving a classification of all orbits $(G/V,\omega,J)$ satisfying $\rho=\lambda\omega$ is not completely hopeless, at least when $G$ is simple.
Unfortunately the intricate combinatoric of the problem prevented us from getting that classification. 
Whereas one can algorithmically list all such orbits for a given simple $G$, guessing the general pattern seems to us out of reach at the moment.
On the other hand, we are able to provide several infinite families for classical simple Lie groups and \emph{a complete classification for all exceptional ones}.
For the latter, see relevant tables in Appendix \ref{sec::appendix}.
For the former we have the following
\begin{thm}\label{thm::mainintro}
Let $G$ be a non-compact real simple Lie group of classical type.
There exist orbits $(G/V,\omega,J)$ satisfying $\rho=\lambda\omega$ if the Lie algebras of $G$ and $V$, and the constant $\lambda$ are as in the following table
\begin{center}
\begin{tabular}{cccc}
$\mathfrak g$ & $\mathfrak v$ & $\lambda$ \\ \hline
$\mathfrak{su}(p,q)$ & $\mathfrak{su}(p) \oplus \mathfrak{su}(q) \oplus \mathbf R$ & $-1$ & $p,q \geq 1$ \\
$\mathfrak{so}(2p,q)$ & $\mathfrak{su}(p) \oplus \mathfrak{so}(q) \oplus \mathbf R$ & $p-q-1$ & $p,q \geq 1$\\
$\mathfrak{so}^*(2\ell)$ & $\mathfrak{su}(\ell) \oplus \mathbf R$ & $-1$ & $\ell \geq 4$ \\
$\mathfrak{sp}(p,q)$ & $\mathfrak{su}(p) \oplus \mathfrak{sp}(q) \oplus \mathbf R$ & $p-2q+1$ & $p,q \geq 1$\\
$\mathfrak{sp}(\ell,\mathbf R)$ & $\mathfrak{su}(\ell) \oplus \mathbf R$ & $-1$ & $\ell \geq 3$\\
\end{tabular}
\end{center}
\end{thm}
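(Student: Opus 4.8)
The plan is to turn the equation $\rho=\lambda\omega$ into a single linear identity in a Cartan subalgebra, and then to verify that identity for each of the five families. Since $v$ is elliptic, fix a compact Cartan subalgebra $\mathfrak t\subset\mathfrak k$ for the Cartan decomposition $\mathfrak g=\mathfrak k\oplus\mathfrak p$, assume $v\in\mathfrak t$, and identify $v$ with a dominant element $\varphi\in C$ for a positive system $\Delta^+$ of the roots of $(\mathfrak g^{\mathbf C},\mathfrak t^{\mathbf C})$ making $\varphi$ dominant; these roots split into compact ($\mathfrak g_\alpha\subset\mathfrak k^{\mathbf C}$) and noncompact ($\mathfrak g_\alpha\subset\mathfrak p^{\mathbf C}$) ones. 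The forms $\omega$ and $\rho$ are homogeneous, hence determined by their values on the tangent space at the base point, which is the sum of the root spaces $\mathfrak g_\alpha$ with $\alpha$ a \emph{tangent root}, i.e. $\langle\varphi,\alpha\rangle\neq 0$; for $\varphi$ interior to its face these are exactly the $\alpha\in\Delta^+\setminus\Delta_{\mathfrak v}^+$. On each such $\mathfrak g_\alpha$ the restriction of $\omega$ is a positive multiple of $\langle\varphi,\alpha\rangle$, while the explicit root-theoretic formula for the Chern-Ricci form expresses $\rho$ as the same multiple of $\langle\psi,\alpha\rangle$, where $\psi=\sum_\alpha\epsilon_\alpha\,\alpha$ is a fixed vector, the sum running over the tangent positive roots and $\epsilon_\alpha=\pm 1$ according as $\mathfrak g_\alpha$ is of compact or of noncompact type. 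Consequently $\rho=\lambda\omega$ holds if and only if $\langle\psi-\lambda\varphi,\alpha\rangle=0$ for every tangent root $\alpha$.

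The decisive point is that for each row of the table the proportionality $\psi\parallel\varphi$, and hence solvability, is automatic, so that the whole content reduces to computing the constant $\lambda$. In every listed case $\mathfrak v$ has one-dimensional centre (the single $\mathbf R$ summand), so its semisimple part $\mathfrak v_{ss}$ has corank one in $\mathfrak t$ and $\varphi$ necessarily spans the line $L=E_{\mathfrak v_{ss}}^{\perp}$ orthogonal to the roots of $\mathfrak v_{ss}$; equivalently $\varphi$ is a positive multiple of the fundamental coweight dual to the unique simple node not lying in $\mathfrak v_{ss}$. Now the set of tangent positive roots is stable under the Weyl group $W_{\mathfrak v}$ of $\mathfrak v$, and $W_{\mathfrak v}$, being generated by reflections in compact roots orthogonal to $\varphi$, preserves the compact/noncompact type; hence $\psi$ is $W_{\mathfrak v}$-invariant. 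Since $W_{\mathfrak v}$ acts on $E_{\mathfrak v_{ss}}$ as the Weyl group of a semisimple root system, its invariants are precisely $L$, so $\psi\in L$ and $\psi$ is automatically a scalar multiple of $\varphi$. Fixing the canonical normalization of $v$ (the distinguished generator of $L$) then gives $\lambda=\langle\psi,\alpha\rangle/\langle\varphi,\alpha\rangle$ for any convenient tangent root $\alpha$.

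It remains to evaluate this ratio in standard $\epsilon_i$-coordinates, type by type. For the three Hermitian symmetric entries $\mathfrak{su}(p,q)$, $\mathfrak{so}^*(2\ell)$ and $\mathfrak{sp}(\ell,\mathbf R)$ the subalgebra $\mathfrak v$ is maximal compact, the distinguished node is cominuscule, and the tangent space equals $\mathfrak p$; every tangent root is then noncompact, all signs $\epsilon_\alpha$ coincide, and $\psi$ becomes $\pm\,2\delta_{\mathfrak p}$, twice the half-sum of the positive noncompact roots, which a short computation shows equals $-\varphi$ in the chosen normalization, giving $\lambda=-1$ uniformly. For the two non-symmetric entries $\mathfrak{so}(2p,q)$ and $\mathfrak{sp}(p,q)$ the subalgebra $\mathfrak v$ sits strictly inside the maximal compact one, so the tangent space $(\mathfrak k\ominus\mathfrak v)\oplus\mathfrak p$ carries both compact and noncompact directions; here one lists all tangent positive roots in coordinates, records each $\epsilon_\alpha$, assembles $\psi$, and reads off $\lambda=p-q-1$, respectively $\lambda=p-2q+1$. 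The inequalities in the last column merely discard low-rank coincidences among the classical types.

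I expect the only real difficulty to be concentrated in this last, non-symmetric, step: correctly reading off the sign $\epsilon_\alpha$ of each tangent root of $\mathfrak{so}(2p,q)$ and $\mathfrak{sp}(p,q)$ — equivalently, determining how the Cartan involution acts on each complexified root space relative to the compact Cartan $\mathfrak t$ — and then organizing the resulting signed root sum so as to recover the precise integers $p-q-1$ and $p-2q+1$. A preliminary verification, which I would carry out first, is that the distinguished $\varphi$ has stabilizer exactly $\mathfrak v$ rather than some larger subalgebra; this is immediate once $\varphi$ is taken in the relative interior of the ray $L$, so that the roots orthogonal to it are exactly those of $\mathfrak v_{ss}$.
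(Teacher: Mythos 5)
Your proposal is correct and follows essentially the same route as the paper: the paper's Proposition \ref{prop::singlenodespecial} establishes the proportionality $\varphi'=\lambda\varphi_p$ for single-painted-node Vogan diagrams by exactly your Weyl-invariance argument (reflections in the compact simple roots fix $\varphi'$, forcing it onto the line spanned by $\varphi_p$), Proposition \ref{prop::fromvogantoalgebrastabilizer} identifies $\mathfrak v$ from the diagram as you do, and the constant $\lambda$ is then extracted by a direct root computation (the paper via Lemma \ref{lem::lambdaintermsofvarphiandtheta}, you via evaluating $(\varphi',\alpha)/(\varphi,\alpha)$ on a tangent root — equivalent up to the overall normalization of $\varphi$, which both the paper and you must fix to pin down the stated values of $\lambda$).
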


We stress that the implication of the theorem above cannot be reversed.
This happens for two reasons.
Firstly for the obvious fact that rescaling the orbit, hence $\omega$, has the effect of rescaling the constant $\lambda$ accordingly.
Secondly, because a simple Lie group can admit several orbits, with different stabilizer, satisfying $\rho=\lambda \omega$ (see tables in the Appendix \ref{sec::appendix}).

Our results on this classification problem are complementary with those recently obtained by Alekseevsky and Podest\`a, who treat the case when $\varphi$ is in \emph{general position} inside $C$. In other words, they consider the case when $L \subset G$ is a maximal torus, and classify all non-compact real simple Lie groups which admit adjoint orbits $(G/L,\omega,J)$ satisfying $\rho=\lambda\omega$ \cite[Theorem 1.1]{AlekseevskyPodesta2018}.

We decided to not include explicit examples in order to contain the length of the paper.
On the other hand, one can check that Hermitian symmetric spaces such as the Siegel upper half-space $Sp(2\ell,\mathbf R)/U(\ell)$ or $SU(p,q)/S(U(p) \times U(q))$ are included in the class of spaces that we are going to discuss. 
Furthermore, non-integrable examples are constituted by period domains of weight two $SO(2p,q)/U(p) \times SO(q)$ \cite[Sections 4.2.1 and 4.2.2]{DellaVedova2017} and also by homogeneous spaces for exceptional Lie groups like $G_{2(2)}/U(2)$, $E_{7(7)}/U(7)$, $E_{8(8)}/U(8)$, among many others.

Finally we give a brief description of the sections of the paper.

In section \ref{sec::structureofg} we recall the definition of the Kirillov-Kostant-Souriau symplectic form $\omega$ on $G/V$ and we present a summary of results that we need on the structure of $\mathfrak g$.

In section \ref{sec::definitionJ} we give the definition of the canonical complex structure $J$ (see Definition \ref{defn::cancomplexstructure}) and we discuss its main features (see Proposition \ref{prop:pseudoholo} and its corollaries) including the compatibility with $\omega$ (Proposition \ref{Porp::compatibilityJ}).

In Section \ref{sec::spacialitycondition} we write the equation $\rho = \lambda \omega$ as an equation $\varphi'=\lambda\varphi$, where $\varphi'$ depends only on the vector $\varphi \in C$ mentioned above. Moreover we show that if $\psi \in C$ is likewise associated to another adjoint orbit and it satisfies $\psi' = \mu \psi$, then the signs of $\lambda$ and $\mu$ are the same (Proposition \ref{prop::uniquenesstype}). 
Moreover, the number of solutions of $\psi'= \mu \psi$ is at most one, a finite number or infinite in accordance with the sign of $\lambda$ being negative, positive or zero respectively (Proposition \ref{prop::finitenessuptoscaling}).
Finally we give a precise characterization of $\varphi$ when it is a solution of $\varphi'=\lambda\varphi$ (Theorems \ref{thm::specialvFGT} and \ref{thm::specialvCY}).

In Section \ref{sec::Hermitianscalarcurvature} we provide a formula for the Hermitian scalar curvature of $(G/V,\omega,J)$, which is necessarily constant due to homogeneity, in terms of $\varphi$ and root data (Lemma \ref{lem::Hermitianscalarcurvature}).

In Section \ref{sec::Nijenhuis} we discuss the integrability of $J$ and we express its Nijenhuis tensor in terms of $\varphi$ and root data (Lemma \ref{lem::NH} and Theorem \ref{thm::|N|^2}).

In section \ref{sec::compactquotients} we consider uniform lattices $\Gamma \subset G$ and compact quotients $(\Gamma \backslash G / V, \omega_\Gamma, J_\Gamma)$. Since $J_\Gamma$ is not integrable in general, we ask when the compact symplectic manifold $(\Gamma \backslash G / V, \omega_\Gamma)$ is of K\"ahler type, i.e. it admits a compatible complex structure. We do not give a full answer, but we show that it is not of K\"ahler type whenever $\Gamma \backslash G / V$ is symplectic Fano, meaning that its first Chern class $c_1$ is a positive multiple of $[\omega_\Gamma]$ (Lemma \ref{lem: sympFanoNotKahler}).
We also briefly discuss our expectation that $(\Gamma \backslash G / V, \omega_\Gamma)$ is of K\"ahler type precisely when $J$ is integrable. 

In section \ref{sec::Vogandiagrams} we use Vogan diagrams as a tool for making some steps toward the classification of all possible adjoint orbits $(G/V,\omega,J)$ satisfying $\rho = \lambda \omega$. 
In particular we discuss how to look at these diagrams in order to find solutions of the equation $\varphi'=\lambda\omega$ and to show that $J$ is not integrable for most of them (Proposition \ref{prop::singlenodespecial} and Theorems \ref{thm::integrabilityonenode} and \ref {thm::mostlynonintegrable}).

Finally, in Appendix \ref{sec::appendix}, we list all solutions of the equation $\varphi'=\lambda\varphi$ associated to connected Vogan diagrams either of rank at most $\ell=4$ or of exceptional type.

\bigskip

The results of this paper are part of the research project of A.G. under the direction of A.D.V within the `Joint PhD Program in Mathematics Milano Bicocca - Pavia - INdAM'.
A.G. has given talks on the results of this paper within the conferences `Hamiltonian PDEs: Models and Applications', Milano - Bicocca, June 2018, and `Progressi Recenti in Geometria Reale e Complessa - XI', CIRM, September 2018.

A very recent preprint of Alekseevsky and Podest\`a \cite{AlekseevskyPodesta2018} partially overlaps with some results of this work. The authors of this work became aware of that work when it appeared on the arXiv.

\section{Adjoint orbits and structure of $\mathfrak g$}\label{sec::structureofg}

The aim of this section is to recall some elementary facts about adjoint orbits of semisimple Lie groups together with some structure theory of real semisimple Lie algebras, mainly to fix the notation.
Most of facts presented here are extensively discussed in \cite[Section III]{Helgason1978} and \cite[Sections II and VI]{Knapp1996}. 
Our exposition follows quite closely the one of Griffiths and Schmid \cite[Section 3]{GriffithsSchmid1969}.

Let $G$ be a non-compact real semisimple Lie group with Lie algebra $\mathfrak g$.
Consider the adjoint action of $G$ on $\mathfrak g$, and let be chosen $v \in \mathfrak g$ having compact isotropy subgroup $V \subset G$.
The Lie algebra of $V$ turns out to be equal to
\begin{equation}
\mathfrak v = \{x \in \mathfrak g \mbox{ s.t. } [v,x]=0 \}.
\end{equation}
Due to compactness of $V$, the Killing form
\begin{equation}
B(x,y) = \tr(\ad(x) \ad(y)) \quad x,y \in \mathfrak g
\end{equation}
restricts to a negative definite scalar product on $\mathfrak v$.
On the other hand, the orthogonal complement
\begin{equation}
\mathfrak m = \{x \in \mathfrak g \mbox{ s.t. } B(x,y)=0 \mbox{ for all } y \in \mathfrak v\}
\end{equation}
is canonically isomorphic to the tangent space at the identity coset $e$ of the adjoint orbit $G/V$ of $v$.

By the Cartan criterion of semisimplicity, the Killing form is non-degenerate.
Therefore it induces a canonical isomorphism between $\mathfrak g$ and its dual $\mathfrak g^*$.
In particular $v \in \mathfrak g$ corresponds to $\nu \in \mathfrak g^*$, defined by $\nu(x)=B(v,x)$ for all $x \in \mathfrak g$. 
Due to $G$-invariance of $B$, this canonical isomorphism is $G$-equivariant.
As a consequence, the co-adjoint orbit of $\nu$ is $G$-equivariantly diffeomorphic to the adjoint orbit $G/V$ of $v$.
Moreover, $G/V$ turns out to be equipped with the Kirillov-Kostant-Souriau symplectic form $\omega$, which is homogeneous and, at the identity coset $e$, it corresponds  with the symplectic form $\sigma$ on $\mathfrak m$ defined by
\begin{equation}
\sigma(x,y) = B(v,[x,y]) \quad x,y \in \mathfrak m.
\end{equation}

More details on the relationship between the symplectic form $\omega$ on $G/K$ and $\sigma$ are discussed in several places (e.g. \cite[Section 3]{DellaVedova2017}).

In the rest of this section we discuss the structure of $\mathfrak g$.
To this end, let $\mathfrak g_c$ be the complex Lie algebra obtained by complexifying $\mathfrak g$. 
Clearly $\mathfrak g \subset \mathfrak g_c$ is fixed by a unique complex conjugation $\tau$ on $\mathfrak g_c$.
Let $\mathfrak k \subset \mathfrak g$ be a maximal compact subalgebra such that $\mathfrak v \subset \mathfrak k$.
A Cartan decomposition
\begin{equation}
\mathfrak g = \mathfrak k \oplus \mathfrak p,
\end{equation}
can be produced by considering the complexification $\mathfrak k_c \subset \mathfrak g_c$ of $\mathfrak k$ and its $\ad(\mathfrak k_c)$-invariant complement $\mathfrak p_c$, and letting $\mathfrak p = \mathfrak p_c \cap \mathfrak g$. 
With this at hand, a compact real form of $\mathfrak g_c$ is given by 
\begin{equation}
\mathfrak g_0 = \mathfrak k \oplus i \mathfrak p.
\end{equation}

Denote by $\tau_0$ the complex conjugation induced on $\mathfrak g_c$ by $\mathfrak g_0$, and let $\mathfrak h_0$ be a maximal abelian subalgebra of $\mathfrak k$ such that $v \in \mathfrak h_0$.
By the hypothesis that $v$ has compact stabilizer, one can deduce that $\mathfrak k$ and $\mathfrak g_0$ have the same rank, hence complexifying $\mathfrak h_0$ yields a $\tau_0$-invariant Cartan subalgebra $\mathfrak h_c$ of $\mathfrak g_c$.

The adjoint representation of $\mathfrak h_c$ on $\mathfrak g_c$ gives a decomposition
\begin{equation}
\mathfrak g_c = \mathfrak h_c \oplus \sum_{\alpha \in \Delta} \mathfrak g^\alpha,
\end{equation}
where the set of roots $\Delta$ is a finite subset of the dual space of $\mathfrak h_c$, and each root space
\begin{equation}\label{eq::root-space}
\mathfrak g^\alpha = \left\{ x \in \mathfrak g_c \mbox{ s.t. } [h,x] = \alpha(h)x \mbox{ for all } h \in \mathfrak h_c \right\}, 
\end{equation}
is one-dimensional.
Moreover, if $\alpha,\beta \in \Delta$, then one has $[\mathfrak g^\alpha,\mathfrak g^\beta] = \mathfrak g^{\alpha+\beta}$ whenever $\alpha+\beta \in \Delta$, and $[\mathfrak g^\alpha,\mathfrak g^\beta] = 0$ otherwise.

Due to the fact that $\mathfrak g_0$ is a compact real form of $\mathfrak g_c$, all roots assume real values on the real vector space $\mathfrak h_{\mathbf R} = i\mathfrak h_0$.
Therefore one can regard $\Delta$ as a subset of the dual space $\mathfrak h_{\mathbf R}^*$.
Since $\mathfrak h_{\mathbf R}$ is purely imaginary with respect to $\tau_0$, one has
\begin{equation}
\tau_0(\mathfrak g^\alpha) = \mathfrak g^{-\alpha}.
\end{equation}

The complex conjugations $\tau$ and $\tau_0$, induced on $\mathfrak g_c$ respectively by $\mathfrak g$ and $\mathfrak g_0$, commute.
Moreover, the composition $\theta = \tau \tau_0$ is an involutive automorphism of $\mathfrak g_c$ whose $1$ and $-1$ eigenspaces are $\mathfrak k_c$ and $\mathfrak p_c$ respectively. 
Since $\mathfrak h_c$ is contained in $\mathfrak k_c$, the adjoint action of the Cartan subalgebra $\mathfrak h_c$ commutes with $\theta$.
This implies that any root space $\mathfrak g^\alpha$ is contained either in $\mathfrak k_c$ or in $\mathfrak p_c$, and the root $\alpha$ is called compact or non-compact accordingly.
Set $\varepsilon_\alpha = -1$ if $\alpha$ is compact and $\varepsilon_\alpha = 1$ otherwise.
One can check that $\varepsilon_\alpha = \varepsilon_{-\alpha}$ for all $\alpha \in \Delta$, and that $\varepsilon_{\alpha+\beta} = - \varepsilon_\alpha \varepsilon_\beta$ whenever $\alpha, \beta, \alpha+\beta \in \Delta$.

A positive root system is a subset $\Delta^+ \subset \Delta$ such that 
\begin{enumerate}\renewcommand{\labelenumi}{\alph{enumi})}
\item for all $\alpha \in \Delta$, either $\alpha$ or $-\alpha$ belongs to $\Delta^+$,
\item if $\alpha, \beta \in \Delta^+$ and $\alpha+\beta \in \Delta$, then $\alpha + \beta \in \Delta^+$.
\end{enumerate}
A positive root is called simple if it cannot be written as a sum $\alpha + \beta$ where $\alpha, \beta \in \Delta^+$. 
Once a positive root system $\Delta^+$ is fixed, the set of simple roots $\Sigma^+ \subset \Delta^+$ turns out to be a basis for $\mathfrak h_{\mathbf R}^*$. Moreover, if $\alpha$ is a root, then $\alpha = \sum_{\gamma \in \Sigma^+} n_\gamma \gamma$, where $n_{\gamma} \in \mathbf Z$ are either all positive or all negative according to the fact that $\alpha$ is a positive or a negative root.
The compactness of such a root is determined by the compactness of $\gamma \in \Sigma^+$ and the integers $n_\gamma$'s.
More precisely, by induction on the formula recalled above for the compactness of a sum of two roots, one has the following
\begin{lem}\label{lem::compactnessroots}
If a root has the form $\alpha = \sum_{\gamma \in \Sigma^+} n_\gamma \gamma$ for suitable $n_\gamma \in \mathbf Z$,
then 
\begin{equation}
\varepsilon_\alpha = (-1)^{1+\sum_{\gamma \in \Sigma^+}n_\gamma} \prod_{\gamma \in \Sigma^+} \varepsilon_\gamma^{n_\gamma}.
\end{equation}
\end{lem}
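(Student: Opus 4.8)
The plan is to argue by induction on the \emph{height} $\mathrm{ht}(\alpha) = \sum_{\gamma \in \Sigma^+} n_\gamma$, using only the two multiplicative rules recalled just before the statement: $\varepsilon_\alpha = \varepsilon_{-\alpha}$, and $\varepsilon_{\alpha+\beta} = -\varepsilon_\alpha \varepsilon_\beta$ whenever $\alpha,\beta,\alpha+\beta \in \Delta$. First I would reduce to the case of a positive root. Since for any root the integers $n_\gamma$ are either all nonnegative or all nonpositive, and since both sides of the claimed identity are invariant under $\alpha \mapsto -\alpha$ --- the left side because $\varepsilon_\alpha = \varepsilon_{-\alpha}$, and the right side because $(-1)^{1-\sum n_\gamma} = (-1)^{1+\sum n_\gamma}$ together with $\varepsilon_\gamma^{-n_\gamma} = \varepsilon_\gamma^{n_\gamma}$ (as $\varepsilon_\gamma = \pm 1$) --- it suffices to treat $\alpha \in \Delta^+$, where all $n_\gamma \geq 0$.

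For the base case $\mathrm{ht}(\alpha)=1$ the root $\alpha$ is simple, and the formula reads $\varepsilon_\alpha = (-1)^{1+1}\varepsilon_\alpha = \varepsilon_\alpha$, which holds trivially. For the inductive step, assume the identity for all positive roots of height less than $h$ and let $\alpha$ be a positive, non-simple root of height $h$. I would invoke the standard fact that there exists a simple root $\gamma$ such that $\beta := \alpha - \gamma$ is again a positive root; then $\alpha = \beta + \gamma$ with $\mathrm{ht}(\beta) = h-1$, and the coordinates of $\beta$ are those of $\alpha$ with $n_\gamma$ lowered by one. Applying the induction hypothesis to $\beta$ and then the sum rule $\varepsilon_\alpha = -\varepsilon_\beta\varepsilon_\gamma$, a short sign bookkeeping gives
\begin{equation}
\varepsilon_\alpha = -\varepsilon_\gamma \cdot (-1)^{1+(h-1)}\,\varepsilon_\gamma^{\,n_\gamma-1}\!\!\prod_{\gamma'\neq\gamma}\varepsilon_{\gamma'}^{\,n_{\gamma'}} = (-1)^{1+h}\prod_{\gamma'\in\Sigma^+}\varepsilon_{\gamma'}^{\,n_{\gamma'}},
\end{equation}
where the last equality uses $\varepsilon_\gamma \cdot \varepsilon_\gamma^{-1} = 1$ and $h = \sum_{\gamma'} n_{\gamma'}$; this is exactly the asserted formula.

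The only non-formal input, and hence the main point to be justified, is the existence of the decomposition $\alpha = \beta + \gamma$ with $\gamma$ simple and $\beta$ a positive root of height $h-1$. This is classical root-system theory: for a positive non-simple $\alpha$ one has $0 < (\alpha,\alpha) = \sum_{\gamma} n_\gamma (\alpha,\gamma)$, so some simple $\gamma$ with $n_\gamma > 0$ satisfies $(\alpha,\gamma) > 0$, and the root-string property then forces $\alpha - \gamma \in \Delta$, which is positive since $\alpha$ is not simple. I would simply cite this from \cite{Knapp1996} rather than reprove it, so that the lemma reduces to the elementary induction outlined above.
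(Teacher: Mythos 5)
Your proof is correct and follows exactly the route the paper indicates: the paper offers no written proof, saying only that the lemma follows ``by induction on the formula recalled above for the compactness of a sum of two roots,'' and your induction on height, together with the reduction to positive roots via $\varepsilon_\alpha=\varepsilon_{-\alpha}$ and the standard decomposition $\alpha=\beta+\gamma$ with $\gamma$ simple, is precisely that argument carried out in full. The sign bookkeeping in your inductive step checks out.
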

Note that compactness of simple roots induces a splitting
\begin{equation*}
\Sigma^+ = \Sigma^+_c \cup \Sigma^+_n
\end{equation*}
where $\Sigma^+_c = \{ \gamma \in \Sigma^+ \, | \, \varepsilon_\gamma = -1 \}$ is the set of compact simple roots, and $\Sigma^+_n = \{ \gamma \in \Sigma^+ \, | \, \varepsilon_\gamma = 1 \}$ is the set of non-compact ones.
Therefore, lemma above reduces to the following
\begin{cor}\label{cor::epsilonalphaformula}
If a root has the form $\alpha = \sum_{\gamma \in \Sigma^+} n_\gamma \gamma$, then 
\begin{equation}
\varepsilon_\alpha = (-1)^{1+\sum_{\gamma \in \Sigma^+_n}n_\gamma}.
\end{equation}
\end{cor}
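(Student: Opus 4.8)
The plan is to derive the corollary directly from Lemma \ref{lem::compactnessroots} by specializing the product $\prod_{\gamma \in \Sigma^+} \varepsilon_\gamma^{n_\gamma}$ according to the splitting $\Sigma^+ = \Sigma^+_c \cup \Sigma^+_n$. First I would recall that, by definition, $\varepsilon_\gamma = -1$ for every $\gamma \in \Sigma^+_c$ and $\varepsilon_\gamma = 1$ for every $\gamma \in \Sigma^+_n$. Factoring the product accordingly gives
\begin{equation*}
\prod_{\gamma \in \Sigma^+} \varepsilon_\gamma^{n_\gamma} = \prod_{\gamma \in \Sigma^+_c} (-1)^{n_\gamma} \prod_{\gamma \in \Sigma^+_n} 1^{n_\gamma} = (-1)^{\sum_{\gamma \in \Sigma^+_c} n_\gamma},
\end{equation*}
so that the entire right-hand side of the lemma collapses to a single power of $-1$.

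Next I would collect the two exponents. Writing $\sum_{\gamma \in \Sigma^+} n_\gamma = \sum_{\gamma \in \Sigma^+_c} n_\gamma + \sum_{\gamma \in \Sigma^+_n} n_\gamma$ and inserting this into the formula of Lemma \ref{lem::compactnessroots}, the exponent of $-1$ becomes
\begin{equation*}
1 + \sum_{\gamma \in \Sigma^+_c} n_\gamma + \sum_{\gamma \in \Sigma^+_n} n_\gamma + \sum_{\gamma \in \Sigma^+_c} n_\gamma = 1 + 2 \sum_{\gamma \in \Sigma^+_c} n_\gamma + \sum_{\gamma \in \Sigma^+_n} n_\gamma.
\end{equation*}
The key observation is that the compact contribution now appears with an even coefficient, hence it is irrelevant modulo $2$; reducing the exponent modulo $2$ leaves exactly $1 + \sum_{\gamma \in \Sigma^+_n} n_\gamma$, which is the claimed formula.

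There is no genuine obstacle here: the statement is a purely formal consequence of Lemma \ref{lem::compactnessroots} together with the definitions of $\Sigma^+_c$ and $\Sigma^+_n$. The only point requiring mild care is the exponent bookkeeping modulo $2$ — namely, recognising that the compact simple roots contribute twice (once through the global sum $\sum_{\gamma \in \Sigma^+} n_\gamma$ and once through the product of the $\varepsilon_\gamma^{n_\gamma}$) and therefore cancel, so that only the non-compact simple roots survive. Since $n_\gamma \in \mathbf Z$, all the exponents are integers and the parity argument is unambiguous irrespective of the signs of the $n_\gamma$.
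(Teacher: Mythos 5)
Your proposal is correct and is precisely the computation the paper leaves implicit when it says the lemma ``reduces to'' the corollary: substitute $\varepsilon_\gamma=-1$ on $\Sigma^+_c$ and $\varepsilon_\gamma=1$ on $\Sigma^+_n$ into the formula of Lemma \ref{lem::compactnessroots} and observe that the compact contribution enters the exponent twice, hence cancels modulo $2$. Nothing further is needed.
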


Note that the Killing form $B$ restricts to a positive scalar product on $\mathfrak h_{\mathbf R}$. 
As a consequence one has an isomorphism between $\mathfrak h_{\mathbf R}^*$ and $\mathfrak h_{\mathbf R}$ which takes $\psi$ to the unique $h_\psi$ such that $\psi(h) = B(h_\psi,h)$ for all $h \in \mathfrak h_{\mathbf R}$.
Therefore, is defined a positive scalar product on $\mathfrak h_{\mathbf R}^*$ by letting 
\begin{equation*}
(\psi,\psi') = B(h_{\psi},h_{\psi'}).
\end{equation*}
The set of hyperplanes $P_\alpha = \{ \psi \in \mathfrak h_{\mathbf R}^* \,|\, (\psi,\alpha)=0\}$, with $\alpha \in \Delta$, divides $\mathfrak h_{\mathbf R}^*$ into a finite number of closed convex cones, named Weyl chambers.
To each positive root system $\Delta^+$ it corresponds a dominant Weyl chamber defined by
\begin{equation}
C = \left\{ \psi \in \mathfrak h_{\mathbf R}^* \,|\, (\psi,\alpha) \geq 0 \mbox{ for all } \alpha \in \Delta^+ \right\},
\end{equation}
and this correspondence is bijective.

Recall that we have chosen $\mathfrak h_0$ so that $v$ belongs to it, or equivalently, so that $iv$ belongs to $\mathfrak h_{\mathbf R}$. 
Therefore is uniquely defined a co-vector
\begin{equation}
\varphi \in \mathfrak h_{\mathbf R}^* \mbox{ such that } h_\varphi = -iv.
\end{equation}
Moreover, note that one can choose a positive root system $\Delta^+$ such that $(\varphi,\alpha) \geq 0$ for all $\alpha \in \Delta^+$ or, equivalently, such that $\varphi$ belongs to the dominant Weyl chamber $C$ associated to $\Delta^+$. 
We assume henceforth that we make such a choice of a positive root system $\Delta^+$.

A convenient way for parametrizing all elements of $C$ is by means of fundamental dominant weights, which we now recall.
Let us denote by $\ell$ the rank of $\mathfrak g$, that is the dimension of $\mathfrak h_0$.
Therefore we can label simple roots form 1 to $\ell$ so that $\Sigma^+ = \{\gamma_1,\dots,\gamma_\ell\}$.
Moreover, let $A = (A_{ij})$ be the associated Cartan matrix, whose coefficients are given by
\begin{equation*}
A_{ij} = \frac{2 (\gamma_i,\gamma_j)}{(\gamma_i,\gamma_i)}.
\end{equation*}
The fundamental dominant weights $\varphi_1,\dots, \varphi_\ell$ are the linear forms on $\mathfrak h_{\mathbf R}$ defined by $\varphi_j = \sum_{i=1}^\ell A^{ij}\gamma_i$, where $(A^{ij}) = A^{-1}$.
Clearly they form a basis of $\mathfrak h_{\mathbf R}^*$.
As shown by the following lemma, this basis turns out to be very well behaved for our purposes.  

\begin{lem}\label{lem::vassumoffunddomweights}
Let $\psi \in \mathfrak h_{\mathbf R}^*$ and write $\psi = \sum_{j=1}^\ell w^j \varphi_j$ for some reals $w^1, \dots, w^\ell$.
Then one has $(\psi,\alpha) \geq 0$ for each positive root $\alpha$ if and only if all $w^i$'s are non-negative.
Moreover one has $\Delta^+ \setminus \psi^\perp = \Span \left\{ \gamma_j | w^j \neq 0\right\} \cap \Delta^+$.
\end{lem}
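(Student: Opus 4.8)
The plan is to reduce everything to a single duality identity between the fundamental dominant weights and the simple roots, and then to exploit the structural fact, recalled just before the statement, that every positive root is a non-negative integer combination of simple roots. The key computation is
\[
(\varphi_j,\gamma_k)=\sum_{i=1}^\ell A^{ij}(\gamma_i,\gamma_k)=\frac{(\gamma_k,\gamma_k)}{2}\sum_{i=1}^\ell A^{ij}A_{ki}=\frac{(\gamma_k,\gamma_k)}{2}\,\delta_{jk},
\]
which uses only the definition $\varphi_j=\sum_i A^{ij}\gamma_i$, the identity $(\gamma_i,\gamma_k)=\tfrac12(\gamma_k,\gamma_k)A_{ki}$ coming from the definition of the Cartan matrix, and $AA^{-1}=\mathrm{Id}$. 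In other words the $\varphi_j$ form the basis dual to the simple coroots, so pairing $\psi=\sum_j w^j\varphi_j$ against a simple root isolates a single coefficient, $(\psi,\gamma_k)=w^k(\gamma_k,\gamma_k)/2$. Since $B$ restricts to a positive scalar product on $\mathfrak h_{\mathbf R}$, one has $(\gamma_k,\gamma_k)>0$, so the sign of $(\psi,\gamma_k)$ equals the sign of $w^k$.

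For the first assertion I would argue both implications from this. If $(\psi,\alpha)\geq 0$ for every positive root, then specialising $\alpha$ to each simple root $\gamma_k$ gives $w^k\geq 0$. Conversely, assuming all $w^k\geq 0$, I write an arbitrary positive root as $\alpha=\sum_k n_k\gamma_k$ with $n_k\in\mathbf Z$, $n_k\geq 0$ (the non-negativity being exactly the recalled structural fact), and expand by bilinearity:
\[
(\psi,\alpha)=\sum_{k=1}^\ell n_k\,w^k\,\frac{(\gamma_k,\gamma_k)}{2}\geq 0 .
\]

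The second assertion is really a statement about which positive roots are orthogonal to $\psi$, so I would analyse the same expression for a positive root $\alpha=\sum_k n_k\gamma_k$. When $\psi$ is dominant this is a sum of non-negative terms, hence it vanishes if and only if every term vanishes, i.e. $n_k w^k=0$ for all $k$; equivalently $n_k=0$ whenever $w^k\neq 0$, which says precisely that $\alpha\in\Span\{\gamma_j\mid w^j=0\}$. This yields the clean characterisation $\psi^\perp\cap\Delta^+=\Span\{\gamma_j\mid w^j=0\}\cap\Delta^+$, which is the content one then has to reconcile with the displayed identity for $\Delta^+\setminus\psi^\perp$.

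The delicate point — the main obstacle — is precisely this no-cancellation step. The characterisation of $\psi^\perp$ hinges on all the $w^j$ sharing one sign, which is guaranteed by the dominance established in the first part; if mixed signs were allowed, accidental cancellations in $\sum_k n_k w^k(\gamma_k,\gamma_k)/2$ could render orthogonal to $\psi$ certain roots not supported on $\{w^j=0\}$, so I would be careful to invoke dominance here. The remaining care is bookkeeping: one must track exactly how the positive roots mixing support and non-support simple roots are distributed relative to $\psi^\perp$ and to $\Span\{\gamma_j\mid w^j\neq 0\}$, since these ``mixed'' roots pair non-trivially with $\psi$ yet need not lie in the span of the nonzero-support simple roots. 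This comparison is the only step of the argument that is not completely formal, and it is where I would scrutinise the precise form of the set equality.
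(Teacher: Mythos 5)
Your proof is correct and is essentially the paper's own argument: the paper simply records the identity $(\psi,\alpha)=\tfrac12\sum_i w^i n^i|\gamma_i|^2$ for $\alpha=\sum_i n^i\gamma_i\in\Delta^+$ and concludes from it, exactly as you do via the duality $(\varphi_j,\gamma_k)=\delta_{jk}|\gamma_k|^2/2$. Your worry about the final set identity is well founded --- as displayed it is false (e.g.\ $\gamma_1+\gamma_2$ lies in $\Delta^+\setminus\psi^\perp$ when $w^1>0$ and $w^2=0$, yet not in $\Span\{\gamma_j\mid w^j\neq0\}$); the correct statement, which is the one you prove and the one the paper actually invokes later (in the proofs of Theorem \ref{thm::specialvFGT} and Proposition \ref{prop::fromvogantoalgebrastabilizer}), is $\Delta^+\cap\psi^\perp=\Span\{\gamma_j\mid w^j=0\}\cap\Delta^+$, valid under the dominance hypothesis that you rightly insist on to rule out cancellation.
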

\begin{proof}
For all $\alpha \in \Delta^+$ of the form $\alpha = \sum_{i=1}^\ell n^i \gamma_i$ for suitable non-negative integers $n^1, \dots, n^\ell$, one readily calculates 
\begin{equation}\label{eq::psiscalalpha}
(\psi,\alpha) = \frac{1}{2} \sum_{i=1}^\ell w^i n^i |\gamma_i|^2,
\end{equation}
whence the thesis follows.
\end{proof}

As a consequence of the above lemma, the dominant Weyl chamber $C$ turns out to be the (closed) convex cone spanned by the fundamental dominant weights $\varphi_1,\dots,\varphi_\ell$.
In particular, one can write
\begin{equation}
\varphi = \sum_{i=1}^\ell v^i \varphi_i \quad \mbox{ for some } \quad v^1,\dots,v^\ell \geq 0.
\end{equation}

We end this section by recalling the following (see e.g. \cite[Theorem 6.6]{Knapp1996} and \cite[Section 3]{GriffithsSchmid1969})
\begin{thm}\label{thm::basisgc}
For each root $\alpha \in \Delta$, one can choose $e_\alpha \in \mathfrak g^{\alpha}$ such that
\begin{enumerate}
\item $[e_\alpha,e_{-\alpha}] = h_\alpha$,
\item $B(e_\alpha, e_\beta) = \delta_{\alpha,-\beta}$, \label{item::Bab}
\item $B(h_\alpha,h) = \alpha(h)$ for all $h \in \mathfrak h_c$, \label{item::Bhalphah}
\item $[e_\alpha,e_\beta] = 0$ if $\alpha+\beta \neq 0$ and $\alpha+\beta \notin \Delta$,
\item $[e_\alpha,e_\beta] = N_{\alpha,\beta}e_{\alpha+\beta}$ if $\alpha+\beta \in \Delta$,
\item $\tau_0(e_\alpha) = -e_{-\alpha}$,
\item $\tau(e_\alpha) = \varepsilon_\alpha e_{-\alpha}$, \label{item::tauealpha}
\end{enumerate} 
where $N_{\alpha,\beta} \in \mathbf R$ are non-zero and satisfy $N_{\beta,\alpha} = - N_{\alpha,\beta} = N_{-\alpha,-\beta} = N_{-\beta,\alpha+\beta} = N_{\alpha+\beta,-\alpha}$.
 \end{thm}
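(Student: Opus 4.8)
The plan is to build the desired basis in the standard way for a compact real form, which is precisely the content of \cite[Theorem 6.6]{Knapp1996}. I would organize the argument into an easy normalization step, a formal step for the symmetries of the $N_{\alpha,\beta}$, one genuinely substantive step, and a formal deduction of the last item.

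First I would dispose of (3), (2), (1) and (4). Condition (3) is not a constraint but the definition of $h_\alpha$: since $B$ restricts to a non-degenerate form on $\mathfrak h_c$, there is a unique $h_\alpha \in \mathfrak h_c$ with $B(h_\alpha,h)=\alpha(h)$ for all $h$. Because $B(\mathfrak g^\alpha,\mathfrak g^\beta)=0$ unless $\alpha+\beta=0$, the Killing form puts $\mathfrak g^\alpha$ and $\mathfrak g^{-\alpha}$ in perfect duality, so starting from any nonzero vector of $\mathfrak g^\alpha$ one may rescale to arrange $B(e_\alpha,e_{-\alpha})=1$, which is (2). Condition (1) then follows formally: $[e_\alpha,e_{-\alpha}]$ lies in $\mathfrak h_c$, and for every $h \in \mathfrak h_c$ invariance of $B$ gives $B([e_\alpha,e_{-\alpha}],h)=B(e_\alpha,[e_{-\alpha},h])=\alpha(h)B(e_\alpha,e_{-\alpha})=\alpha(h)$, whence $[e_\alpha,e_{-\alpha}]=h_\alpha$. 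Condition (4), and the fact that the $N_{\alpha,\beta}$ in (5) are nonzero whenever $\alpha+\beta\in\Delta$, are exactly the bracket relations $[\mathfrak g^\alpha,\mathfrak g^\beta]=\mathfrak g^{\alpha+\beta}$ recorded after the root-space decomposition, together with the $\mathfrak{sl}_2$ root-string analysis.

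Next I would establish the symmetries of the $N_{\alpha,\beta}$. Antisymmetry $N_{\beta,\alpha}=-N_{\alpha,\beta}$ is immediate from $[e_\alpha,e_\beta]=-[e_\beta,e_\alpha]$. The remaining identities $N_{\alpha,\beta}=N_{-\beta,\alpha+\beta}=N_{\alpha+\beta,-\alpha}$ and $N_{-\alpha,-\beta}=-N_{\alpha,\beta}$ are the classical Chevalley relations, obtained by applying the Jacobi identity to the triple $e_\alpha,e_\beta,e_{-\alpha-\beta}$ (whose iterated brackets land in $\mathfrak h_c$) and pairing against $\mathfrak h_c$ with the invariant form $B$; the relation for $N_{-\alpha,-\beta}$ additionally uses the reality of the eventual basis established in the next step. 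The one substantive step, and the main obstacle, is arranging (6) together with the reality of all $N_{\alpha,\beta}$ simultaneously, i.e.\ the Weyl unitary trick. Since $\tau_0$ maps $\mathfrak g^\alpha$ to $\mathfrak g^{-\alpha}$, one has $\tau_0(e_\alpha)=a_\alpha e_{-\alpha}$ for some $a_\alpha\in\mathbf C$; using that the Hermitian form $(X,Y)\mapsto -B(X,\tau_0 Y)$ is positive definite (compactness of $\mathfrak g_0$), one computes $-B(e_\alpha,\tau_0 e_\alpha)=-a_\alpha>0$, so $a_\alpha$ is a negative real. Rescaling $e_\alpha \mapsto c_\alpha e_\alpha$ with $c_{-\alpha}=c_\alpha^{-1}$ (which preserves (1)--(4)) and $|c_\alpha|^2=-a_\alpha^{-1}$ then yields $\tau_0(e_\alpha)=-e_{-\alpha}$, i.e.\ (6). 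The delicate point is that this normalization must be performed compatibly across all roots so that the residual phase freedom in the $c_\alpha$ can be fixed to make every structure constant real with the stated symmetries; this is the standard inductive choice of root vectors along an ordering of $\Delta^+$, exactly as in \cite[Theorem 6.6]{Knapp1996}.

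Finally, (7) is a formal consequence of (6). Recalling that $\theta=\tau\tau_0$ has $\mathfrak k_c$ and $\mathfrak p_c$ as its $(+1)$- and $(-1)$-eigenspaces, and that $\mathfrak g^\alpha\subset\mathfrak k_c$ exactly when $\alpha$ is compact, the definition of $\varepsilon_\alpha$ gives $\theta(e_\alpha)=-\varepsilon_\alpha e_\alpha$ (indeed $-\varepsilon_\alpha=+1$ in the compact case and $-\varepsilon_\alpha=-1$ in the non-compact one). Since $\tau=\theta\tau_0$ and $\varepsilon_{-\alpha}=\varepsilon_\alpha$, this yields $\tau(e_\alpha)=\theta(\tau_0 e_\alpha)=\theta(-e_{-\alpha})=-(-\varepsilon_\alpha)e_{-\alpha}=\varepsilon_\alpha e_{-\alpha}$, as claimed.
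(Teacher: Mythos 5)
Your outline is correct and matches what the paper does: Theorem \ref{thm::basisgc} is stated there without proof, as a recalled standard fact with exactly the reference (\cite[Theorem 6.6]{Knapp1996}) to which you also defer the one genuinely substantive step, namely the consistent choice of phases across $\Delta$ making all the $N_{\alpha,\beta}$ real. The only blemish is a sign in your transcription of the Chevalley relations --- the cyclic identity coming from the Jacobi identity on $e_\alpha,e_\beta,e_{-\alpha-\beta}$ gives $N_{-\beta,\alpha+\beta}=N_{\alpha+\beta,-\alpha}=N_{-\alpha,-\beta}=-N_{\alpha,\beta}$ rather than $N_{-\beta,\alpha+\beta}=N_{\alpha,\beta}$ --- but this does not affect the substance of the argument.
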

Note that one can conveniently define $N_{\alpha,\beta}=0$ whenever $\alpha+\beta \neq 0$ and it is not a root, so that by abuse of notation one can write $[e_\alpha,e_\beta] = N_{\alpha,\beta}e_{\alpha+\beta}$ for all $\alpha,\beta \in \Delta$ even if $\alpha+\beta$ is not a root (hence $e_{\alpha+\beta}$ is undefined). This will be useful in some computations below.

\section{Canonical almost complex structure}\label{sec::definitionJ}

In this section we recall the definition of a canonical homogeneous almost complex structure on $G/V$ which turns out to be compatible with the Kirillov-Kostant-Souriau symplectic form $\omega$ \cite{DellaVedova2017,AlekseevskyPodesta2018}.

For each root $\alpha \in \Delta$ consider the real number 
\begin{equation}\label{eq::deflambdaalpha}
\lambda_\alpha = s_\alpha (\alpha,\varphi)
\end{equation} where $s_\alpha$ is the signum of $\alpha$, meaning that $s_\alpha = +1$ if $\alpha$ is a positive root, and $s_\alpha = -1$ otherwise.
Note that by our assumption on the Weyl chamber one has $\lambda_\alpha \geq 0$, and by definition above one has $\lambda_\alpha=0$ precisely when $\alpha$ is orthogonal to $\varphi$.
Moreover define
\begin{equation}\label{eq::uandvintermsofe}
u_\alpha = \frac{i^{(1-\varepsilon_\alpha)/2}}{\sqrt 2} (e_\alpha + e_{-\alpha}), \qquad
v_\alpha = \frac{i^{(3-\varepsilon_\alpha)/2}}{\sqrt 2} s_\alpha (e_\alpha - e_{-\alpha}).
\end{equation}
Clearly one has $u_\alpha = u_{-\alpha}$ and similarly $v_\alpha = v_{-\alpha}$.
Moreover one has the following
\begin{lem}\label{lem::advonualphaandvalpha}
For all roots $\alpha, \beta \in \Delta$ one has
\begin{enumerate}
\item $B(u_\alpha,u_\beta) = B(v_\alpha,v_\beta) = (\delta_{\alpha,\beta} + \delta_{\alpha,-\beta})\varepsilon_\alpha$, \label{item::Bualphaubeta}
\item $B(u_\alpha,v_\beta)=0$, \label{item::Bualphavbeta}
\item $u_\alpha, v_\alpha \in \mathfrak g$, \label{item::alphavalphaing}
\item $[v,u_\alpha] = \lambda_\alpha v_\alpha$ and $[v,v_\alpha] = -\lambda_\alpha u_\alpha$. \label{item::[v,ualpha]}
\end{enumerate}
\end{lem}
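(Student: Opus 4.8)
The plan is to prove all four identities by direct computation from the definitions in \eqref{eq::uandvintermsofe}, expanding everything in terms of the root vectors $e_\alpha$ and exploiting the properties collected in Theorem \ref{thm::basisgc}, together with the fact that $\varphi$ and $v$ are related by $h_\varphi = -iv$.

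For item \eqref{item::Bualphaubeta}, I would substitute the definition of $u_\alpha$ and $u_\beta$ and use bilinearity of $B$ to reduce $B(u_\alpha,u_\beta)$ to a sum of four terms of the form $B(e_{\pm\alpha},e_{\pm\beta})$. By property \eqref{item::Bab} of Theorem \ref{thm::basisgc}, namely $B(e_\alpha,e_\beta)=\delta_{\alpha,-\beta}$, only the terms pairing $e_\alpha$ with $e_{-\beta}$ survive, so the whole expression collapses to $\delta_{\alpha,\beta}+\delta_{\alpha,-\beta}$ up to the scalar prefactor. The prefactor is $\tfrac{1}{2} i^{(1-\varepsilon_\alpha)/2} i^{(1-\varepsilon_\beta)/2}$; on the support of the deltas one has $\beta=\pm\alpha$, hence $\varepsilon_\beta=\varepsilon_\alpha$ (recall $\varepsilon_\alpha=\varepsilon_{-\alpha}$), so the prefactor becomes $\tfrac{1}{2}i^{1-\varepsilon_\alpha}$, which equals $\varepsilon_\alpha$ after checking the two cases $\varepsilon_\alpha=\pm1$. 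The computation for $v_\alpha$ is identical, with the extra sign $s_\alpha s_\beta$ and shifted power of $i$ conspiring to give the same answer. Item \eqref{item::Bualphavbeta} follows the same template: the surviving cross-terms again force $\beta=\pm\alpha$, but now the antisymmetry $e_\alpha-e_{-\alpha}$ against the symmetry $e_\alpha+e_{-\alpha}$ produces a cancellation, yielding $0$.

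For item \eqref{item::alphavalphaing}, I would show $u_\alpha$ and $v_\alpha$ are fixed by $\tau$, since $\mathfrak g$ is precisely the fixed-point set of $\tau$. Applying $\tau$ to $e_\alpha+e_{-\alpha}$ and using property \eqref{item::tauealpha}, $\tau(e_\alpha)=\varepsilon_\alpha e_{-\alpha}$, one gets $\tau(e_\alpha+e_{-\alpha})=\varepsilon_\alpha(e_{-\alpha}+e_\alpha)$; combined with $\tau(i)=-i$ acting on the prefactor $i^{(1-\varepsilon_\alpha)/2}$, the factors of $\varepsilon_\alpha$ and the conjugated power of $i$ should cancel to give $\tau(u_\alpha)=u_\alpha$, and similarly for $v_\alpha$. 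The two cases $\varepsilon_\alpha=\pm1$ must again be checked separately because the power of $i$ is integer-valued only after that case split.

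Item \eqref{item::[v,ualpha]} is where the root data enters essentially. Using $v=ih_\varphi$ and the defining relation \eqref{eq::root-space} that $[h,e_\alpha]=\alpha(h)e_\alpha$, together with $B(h_\alpha,h)=\alpha(h)$ from \eqref{item::Bhalphah}, I expect $[v,e_\alpha]=i(\varphi,\alpha)e_\alpha$, so that $[v,e_{\pm\alpha}]=\pm i(\varphi,\alpha)e_{\pm\alpha}$. Feeding this into the definitions and recalling $\lambda_\alpha=s_\alpha(\alpha,\varphi)$ should turn the bracket $[v,u_\alpha]$ into a multiple of $v_\alpha$ with coefficient $\lambda_\alpha$, the relative power of $i$ between $u_\alpha$ and $v_\alpha$ (a factor of $i$, plus the sign $s_\alpha$) being exactly what converts the symmetric combination into the antisymmetric one and accounts for the sign. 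I anticipate that the \emph{main obstacle} is precisely the careful bookkeeping of the half-integer powers of $i$ and the interplay of the signs $s_\alpha$, $\varepsilon_\alpha$: one must split into the cases $\varepsilon_\alpha=\pm1$ to see that $i^{(1-\varepsilon_\alpha)/2}$ is a well-defined real or purely imaginary scalar, and verify that across the identities these phases combine consistently so that the final quantities are real and land in $\mathfrak g$. The Lie-algebraic content is light; the difficulty is entirely in the normalization constants.
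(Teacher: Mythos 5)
Your proposal is correct and follows exactly the route the paper takes: its proof of Lemma \ref{lem::advonualphaandvalpha} simply cites item \ref{item::Bab} of Theorem \ref{thm::basisgc} for the $B$-pairings, item \ref{item::tauealpha} for $\tau$-invariance, and item \ref{item::Bhalphah} together with \eqref{eq::root-space} and $h_\varphi=-iv$ for the brackets, leaving the ``easy calculations'' you spell out to the reader. Your bookkeeping of the powers of $i$ and the signs $s_\alpha$, $\varepsilon_\alpha$ checks out in all cases, so nothing is missing.
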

\begin{proof}
These are simple consequences of Theorem \ref{thm::basisgc}.
In particular, items \ref{item::Bualphaubeta} and \ref{item::Bualphavbeta} follow by item \ref{item::Bab} of the theorem after some easy calculations.

On the other hand, in order to prove item \ref{item::alphavalphaing} one has just to show that $u_\alpha$ and $v_\alpha$ are invariant with respect to the complex conjugation $\tau$.
This follows easily by item \ref{item::tauealpha} of Theorem \ref{thm::basisgc}.

Finally, item \ref{item::[v,ualpha]} above follows by definition of root-space \eqref{eq::root-space}, by item \ref{item::Bhalphah} of Theorem \ref{thm::basisgc}, and by the fact that $h_\varphi = -iv$.
\end{proof}

A straightforward consequence of lemma above is the $B$-orthogonal decomposition
\begin{equation}
\mathfrak g = \mathfrak h_0 \oplus \sum_{\alpha \in \Delta^+} \Span\{u_\alpha ,v_\alpha\}.
\end{equation}
Note that it can be refined by splitting the set of positive roots $\Delta^+$ into the subset of positive roots $\Delta^+ \cap \varphi^\perp$ which are orthogonal to $\varphi$, and the set $\Delta^+ \setminus \varphi^\perp$ of those which are not orthogonal to $\varphi$.
By \eqref{eq::deflambdaalpha}, a root $\alpha$ belongs either to $\Delta^+ \cap \varphi^\perp$ or $\Delta^+ \setminus \varphi^\perp$ according to $\lambda_\alpha=0$ or $\lambda_\alpha>0$.
Therefore, by item \ref{item::[v,ualpha]} of Lemma \ref{lem::advonualphaandvalpha} it follows that the Lie algebra $\mathfrak v$ of the stabilizer of $v$ decomposes as the direct sum of $\mathfrak h_0$ with the subspace of $\mathfrak g$ spanned by $u_\alpha, v_\alpha$ as $\alpha$ runs in $\Delta^+ \cap \varphi^\perp$.
Since $V$ is compact, all roots that belong to $\Delta^+ \cap \varphi^\perp$ must be compact. 
On the other hand, the subspace of $\mathfrak g$ spanned by $u_\alpha, v_\alpha$ for all $\alpha \in \Delta^+ \setminus \varphi^\perp$ turns out to be $\mathfrak m$.
From this it follows readily a formula for the dimension of the adjoint orbit $G/V$ of $v$ in terms of $\varphi$ and the positive roots.
Summarizing one has the following
\begin{prop}\label{prop::decompvplusm}
The summands of the $B$-orthogonal decomposition $\mathfrak g = \mathfrak v \oplus \mathfrak m$ are given by
\begin{equation}\label{eq::kandmdecomposition}
\mathfrak v = \mathfrak h_0 \oplus \sum_{\alpha \in \Delta^+ \cap \varphi^\perp} \Span\{u_\alpha, v_\alpha\}, \qquad
\mathfrak m = \sum_{\alpha \in \Delta^+ \setminus \varphi^\perp} \Span\{u_\alpha, v_\alpha\}.
\end{equation}
Moreover, all roots of $\Delta^+ \cap \varphi^\perp$ are compact and one has the dimension formula
\begin{equation}\label{eq::dimensionorbit}
\dim G/V = \dim \mathfrak g - \ell - 2 \card\left\{ \alpha \in \Delta^+ | (\alpha,\varphi)=0 \right\},
\end{equation}
where $\ell = \dim \mathfrak h_0$ is the rank of $\mathfrak g$.
\end{prop}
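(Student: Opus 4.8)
The plan is to read everything off from the action of $\ad(v)$ on the $B$-orthogonal decomposition $\mathfrak g = \mathfrak h_0 \oplus \sum_{\alpha \in \Delta^+} \Span\{u_\alpha, v_\alpha\}$ recalled just above, using only Lemma \ref{lem::advonualphaandvalpha} and the compactness of $V$. First I would identify $\mathfrak v = \ker \ad(v)$ blockwise. On $\mathfrak h_0$ the operator $\ad(v)$ vanishes identically, since $\mathfrak h_0$ is abelian and $v \in \mathfrak h_0$. On each plane $\Span\{u_\alpha, v_\alpha\}$, item \ref{item::[v,ualpha]} of the lemma gives $u_\alpha \mapsto \lambda_\alpha v_\alpha$ and $v_\alpha \mapsto -\lambda_\alpha u_\alpha$, so that in the basis $\{u_\alpha, v_\alpha\}$ the operator $\ad(v)$ is represented by $\left(\begin{smallmatrix} 0 & -\lambda_\alpha \\ \lambda_\alpha & 0 \end{smallmatrix}\right)$; this is invertible when $\lambda_\alpha \neq 0$ and identically zero when $\lambda_\alpha = 0$. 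Since each summand is $\ad(v)$-invariant, the kernel splits as the sum of the kernels on the summands, whence $\mathfrak v = \mathfrak h_0 \oplus \sum_{\lambda_\alpha = 0} \Span\{u_\alpha, v_\alpha\}$. As $\lambda_\alpha = s_\alpha(\alpha,\varphi)$ vanishes exactly when $\alpha \in \varphi^\perp$, this is the asserted formula for $\mathfrak v$, and the expression for $\mathfrak m$ follows at once by taking the $B$-orthogonal complement inside the full decomposition.

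Next I would show that the roots of $\Delta^+ \cap \varphi^\perp$ are compact. Compactness of $V$ means that $B$ restricts to a negative definite form on $\mathfrak v$. For $\alpha \in \Delta^+ \cap \varphi^\perp$ we have $u_\alpha \in \mathfrak v$, and item \ref{item::Bualphaubeta} of Lemma \ref{lem::advonualphaandvalpha} gives $B(u_\alpha, u_\alpha) = \varepsilon_\alpha$ (the term $\delta_{\alpha,-\alpha}$ drops out because $\alpha \neq -\alpha$). Negative definiteness then forces $\varepsilon_\alpha = -1$, i.e. $\alpha$ is compact.

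The dimension formula is then pure bookkeeping: the full decomposition gives $\dim \mathfrak g = \ell + 2\,\card \Delta^+$ and $\dim \mathfrak v = \ell + 2\,\card(\Delta^+ \cap \varphi^\perp)$, so that $\dim G/V = \dim \mathfrak m = \dim \mathfrak g - \dim \mathfrak v = \dim \mathfrak g - \ell - 2\,\card\{\alpha \in \Delta^+ \mid (\alpha,\varphi)=0\}$. I do not expect any genuine obstacle here, as every ingredient is already furnished by the preceding discussion; the only points demanding a moment of care are the observation that the $\ad(v)$-invariance of the splitting legitimizes the blockwise computation of the kernel, and the collapse of the Kronecker deltas in item \ref{item::Bualphaubeta} on the diagonal, which is precisely what converts compactness of $V$ into compactness of the individual roots.
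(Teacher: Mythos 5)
Your argument is correct and follows essentially the same route as the paper's: the paper likewise reads off $\mathfrak v=\ker\ad(v)$ blockwise from item \ref{item::[v,ualpha]} of Lemma \ref{lem::advonualphaandvalpha} applied to the $B$-orthogonal decomposition $\mathfrak g = \mathfrak h_0 \oplus \sum_{\alpha\in\Delta^+}\Span\{u_\alpha,v_\alpha\}$, deduces compactness of the roots in $\Delta^+\cap\varphi^\perp$ from negative definiteness of $B$ on $\mathfrak v$, and obtains the dimension formula by counting. Your write-up merely makes explicit the $2\times 2$ block matrices and the evaluation $B(u_\alpha,u_\alpha)=\varepsilon_\alpha$, which the paper leaves implicit.
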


As anticipated at the beginning of this section, here we define a canonical homogeneous almost complex structure $J$ on $G/V$.
In order to do this let $H$ be the complex structure on $\mathfrak h_0^\perp = \Span_{\alpha \in \Delta^+} \{u_\alpha,v_\alpha\}$ defined by
\begin{equation}\label{eq::defH}
Hu_\alpha = \varepsilon_\alpha v_\alpha, \qquad Hv_\alpha = - \varepsilon_\alpha u_\alpha \qquad \mbox{for all } \alpha \in \Delta.
\end{equation}
Clearly this complex structure makes complex the splitting 
\begin{equation*}
\mathfrak h_0^\perp = \Span_{\alpha \in \Delta^+ \cap \varphi^\perp} \{u_\alpha,v_\alpha\} \oplus \Span_{\alpha \in \Delta^+ \setminus \varphi^\perp} \{u_\alpha,v_\alpha\}.
\end{equation*}
Note that the second summand is $\mathfrak m$.
\begin{defn}\label{defn::cancomplexstructure}
The canonical almost complex structure $J$ on the orbit $G/V$ is the homogeneous structure induced by $H$ on $\mathfrak m$.
\end{defn}

\begin{rem}\label{rem::equivtoADV}
If one decomposes $x \in \mathfrak m$ as $x = \sum_{\alpha \in \Delta^+ \setminus \varphi^\perp} x_\alpha$, where $x_\alpha$ is the component of $x$ along the real two-dimensional space spanned by $u_\alpha,v_\alpha$, then by item \ref{item::[v,ualpha]} of Lemma \ref{lem::advonualphaandvalpha} it follows that
\begin{equation}\label{eq::Hxdecomp}
H x = \sum_{\alpha \in \Delta^+ \setminus \varphi^\perp} \frac{\varepsilon_\alpha}{\lambda_\alpha}[v,x_\alpha] \qquad x \in \mathfrak m.
\end{equation}
Therefore $J$ on $G/V$ coincides with the almost complex structure studied by the first author \cite[Subsection 4.2]{DellaVedova2017}.

On the other hand, the almost complex structure induced by $H$ has been recently studied also by Alekseevsky and Podest\`a \cite{AlekseevskyPodesta2018}.
In particular, when $G$ is simple, they show that there are no other almost complex structures on $G/V$ which are both compatible with $\omega$ and homogeneous.
\end{rem}
The complex structure defined above is compatible with $G$-equivariant projections of adjoint orbits.
More precisely one has the following
\begin{prop}\label{prop:pseudoholo}
Let $\tilde \varphi$ be an element of $\mathfrak h_{\mathbf R}^*$ such that for all roots $\alpha \in \Delta$ one has $(\tilde \varphi,\alpha)=0$ whenever $(\varphi,\alpha)=0$.
Therefore the stabilizer $\tilde V$ of $\tilde v = - i h_{\tilde \varphi}$ contains $V$ and the induced projection $\pi : G/V \to G/ \tilde V$ is pseudo-holomorphic with respect to the almost complex structures $\tilde J$ and $J$ induced by $H$ on the adjoint orbits of $\tilde v$ and $v$ respectively.
\end{prop}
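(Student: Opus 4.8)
The plan is to reduce the whole statement to the identity coset, where it becomes a question about the root-block decomposition of $\mathfrak g$, and then to propagate by homogeneity. First I would record what the hypothesis means combinatorially: requiring $(\tilde\varphi,\alpha)=0$ whenever $(\varphi,\alpha)=0$ is the same as the inclusion $\Delta^+\cap\varphi^\perp\subseteq\Delta^+\cap\tilde\varphi^\perp$. Repeating the bracket computation of Lemma \ref{lem::advonualphaandvalpha} with $\tilde v=-ih_{\tilde\varphi}$ in place of $v$ shows that $[\tilde v,u_\alpha]$ and $[\tilde v,v_\alpha]$ are proportional to $(\alpha,\tilde\varphi)$, so the centralizer of $\tilde v$ admits exactly the decomposition of Proposition \ref{prop::decompvplusm}, namely $\tilde{\mathfrak v}=\mathfrak h_0\oplus\sum_{\alpha\in\Delta^+\cap\tilde\varphi^\perp}\Span\{u_\alpha,v_\alpha\}$ and $\tilde{\mathfrak m}=\sum_{\alpha\in\Delta^+\setminus\tilde\varphi^\perp}\Span\{u_\alpha,v_\alpha\}$; note this step uses neither compactness of $\tilde V$ nor dominance of $\tilde\varphi$. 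The inclusion of perpendicular root sets then yields at once $\mathfrak v\subseteq\tilde{\mathfrak v}$ and, dually, $\tilde{\mathfrak m}\subseteq\mathfrak m$.

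To upgrade $\mathfrak v\subseteq\tilde{\mathfrak v}$ to $V\subseteq\tilde V$ I would observe that the same bracket relations show $\tilde v$ to commute with every generator of $\mathfrak v$, i.e. $\tilde v$ lies in the centre of $\mathfrak v$; since $V$, being the centralizer of an element of the compact connected group $K$, is connected, it acts trivially on the centre of its own Lie algebra and hence fixes $\tilde v$. This gives $V\subseteq\tilde V$ and a well-defined $G$-equivariant projection $\pi$. For pseudo-holomorphicity, $G$-equivariance of $\pi$ together with $G$-invariance of $J$ and $\tilde J$ reduces the claim to the single identity $d\pi_e\circ J=\tilde J\circ d\pi_e$ on $T_e(G/V)\cong\mathfrak m$. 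Under the identifications $T_e(G/V)\cong\mathfrak m$ and $T_e(G/\tilde V)\cong\tilde{\mathfrak m}$, the map $d\pi_e$ is the restriction to $\mathfrak m$ of the projection $\mathfrak g\to\tilde{\mathfrak m}$ along $\tilde{\mathfrak v}$; because both reductive decompositions refine the same root blocks $\Span\{u_\alpha,v_\alpha\}$, this map simply discards the blocks with $\alpha\in(\Delta^+\setminus\varphi^\perp)\cap\tilde\varphi^\perp$ and is the identity on those with $\alpha\in\Delta^+\setminus\tilde\varphi^\perp$. Finally, since by \eqref{eq::defH} the operator $H$ preserves each block $\Span\{u_\alpha,v_\alpha\}$, so do $J=H|_{\mathfrak m}$ and $\tilde J=H|_{\tilde{\mathfrak m}}$; dropping a collection of blocks therefore commutes with $H$, which is precisely $d\pi_e\circ J=\tilde J\circ d\pi_e$.

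The only genuinely delicate point, in my view, is the passage from the Lie-algebra inclusion to the group inclusion $V\subseteq\tilde V$: at the infinitesimal level it is automatic, but at the group level it rests on $\tilde v$ being central in $\mathfrak v$ together with connectedness of $V$, and I would want to state the connectedness input carefully, as it comes from $V$ being the centralizer of an element in a compact connected group. Everything else is bookkeeping: once one sees that the two reductive complements $\mathfrak m$ and $\tilde{\mathfrak m}$ are unions of the same two-dimensional root blocks and that $H$ is block-diagonal with respect to them, the identity $d\pi_e\circ J=\tilde J\circ d\pi_e$ is immediate. It is worth emphasising that nothing in the argument forces $\tilde\varphi$ to be dominant or $\tilde V$ to be compact, so the conclusion holds for every $\tilde\varphi$ satisfying the stated vanishing hypothesis.
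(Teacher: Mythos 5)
Your proof is correct and takes essentially the same route as the paper's: Proposition \ref{prop::decompvplusm}'s root-block decomposition gives $\mathfrak v\subseteq\tilde{\mathfrak v}$ and $\tilde{\mathfrak m}\subseteq\mathfrak m$, and by $G$-equivariance pseudo-holomorphicity reduces to the fact that the $B$-orthogonal projection $\mathfrak m\to\tilde{\mathfrak m}$ respects the two-dimensional blocks $\Span\{u_\alpha,v_\alpha\}$ preserved by $H$. You are in fact more careful than the paper on the passage from $\mathfrak v\subseteq\tilde{\mathfrak v}$ to the group-level inclusion $V\subseteq\tilde V$ (which the paper simply asserts); the only repair needed is in your justification of connectedness of $V$: it is the centralizer in the compact connected group $K$ of the \emph{torus} $\overline{\exp(\mathbf R v)}$ (equivalently of the Lie algebra element $v$), not of a group element of $K$, whose centralizer could well be disconnected.
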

\begin{proof}
By Proposition \ref{prop::decompvplusm}, the Lie algebra of the stabilizer of $\tilde v$ is given by
\begin{equation}
\tilde {\mathfrak v} = \mathfrak h_0 \oplus \sum_{\alpha \in \Delta^+ \cap \tilde \varphi^\perp} \Span\{u_\alpha, v_\alpha\},
\end{equation}
which contains $\mathfrak v$ in force of the hypothesis on $\tilde \varphi$.
This proves that $V \subset \tilde V$.
The induced projection $\pi: G/V \to G/\tilde V$ is clearly $G$-equivariant, thus in order to prove that it is pseudo-holomorphic it is enough to prove that its differential $d_e\pi$ at the identity coset $e \in G/V$ is complex-linear.
To this end, note that
\begin{equation}
\tilde {\mathfrak m} = \mathfrak h_0 \oplus \sum_{\alpha \in \Delta^+ \setminus \tilde \varphi^\perp} \Span\{u_\alpha, v_\alpha\}
\end{equation}
is contained in $\mathfrak m$ and that the differential $d_e\pi$ corresponds to the (B-orthogonal) projection from $\mathfrak m$ to $\tilde{\mathfrak m}$, which by \eqref{eq::defH} intertwines the complex structures induced on $\mathfrak m$ and $\tilde{\mathfrak m}$ by $H$.
\end{proof}
In the situation of the proposition above, choosing $\tilde \varphi \in C$ being orthogonal to no roots yields the following
\begin{cor}\label{cor::G/TG/VG/Kbis}
Associated with $v$ there exists $w \in \mathfrak g$ whose stabilizer is a maximal torus $T \subset G$ such that $T \subset V$.
Once the orbit $G/T$ is equipped with the invariant almost complex structure induced by $H$, the natural projection
\begin{equation*}
G/T \to G/V
\end{equation*}
is pseudo-holomorphic.
\end{cor}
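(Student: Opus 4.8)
The plan is to realise the corollary as an instance of Proposition \ref{prop:pseudoholo} with the base and target orbits interchanged: there one \emph{enlarges} the stabilizer by passing to a co-vector orthogonal to more roots, whereas here one \emph{shrinks} it to a maximal torus by passing to a co-vector orthogonal to no root. Concretely, I would first exhibit the required element $w$, then identify its stabilizer and verify $T \subset V$, and finally read off pseudo-holomorphicity from the same differential computation used in the proposition.

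To produce $w$, I would choose a co-vector in the interior of the dominant Weyl chamber. Writing $\psi = \sum_{j=1}^\ell w^j \varphi_j$ with all $w^j > 0$, formula \eqref{eq::psiscalalpha} gives $(\psi,\alpha) > 0$ for every $\alpha \in \Delta^+$ (each positive root has at least one strictly positive coordinate in the simple roots), so $\psi$ is orthogonal to no root; I then set $w = -i\,h_\psi \in \mathfrak g$, which lies in $\mathfrak h_0$ since $\mathfrak h_{\mathbf R} = i\mathfrak h_0$. Applying Proposition \ref{prop::decompvplusm} to $\psi$ in place of $\varphi$, the emptiness of $\Delta^+ \cap \psi^\perp$ forces the stabilizer Lie algebra to be exactly $\mathfrak h_0$, with $B$-orthogonal complement $\mathfrak m_w = \sum_{\alpha \in \Delta^+} \Span\{u_\alpha,v_\alpha\} = \mathfrak h_0^\perp$. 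Since $\mathfrak h_0$ is maximal abelian in $\mathfrak k$, the corresponding subgroup $T = \exp \mathfrak h_0$ is a maximal torus; it is compact, so $w$ has compact isotropy and $G/T$ carries the structure induced by $H$.

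For the containment, at the level of Lie algebras $\mathfrak h_0 \subset \mathfrak v$ is immediate from the description of $\mathfrak v$ in Proposition \ref{prop::decompvplusm}. At the group level, every element of $T = \exp \mathfrak h_0$ fixes $v$ because $v \in \mathfrak h_0$ and $\mathfrak h_0$ is abelian, so $\operatorname{Ad}(\exp X) v = v$ for all $X \in \mathfrak h_0$; therefore $T \subset V$. This short passage from the infinitesimal to the global statement is the only point that goes beyond the linear algebra of the proposition, and I would flag it as the main (though minor) obstacle.

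Finally, pseudo-holomorphicity follows exactly as in Proposition \ref{prop:pseudoholo}: the projection $\pi \colon G/T \to G/V$ is $G$-equivariant, so it suffices to check that $d_e\pi$ is complex-linear. This differential is the $B$-orthogonal projection $\mathfrak h_0^\perp = \mathfrak m_w \to \mathfrak m$, which annihilates the two-planes $\Span\{u_\alpha,v_\alpha\}$ with $\alpha \in \Delta^+ \cap \varphi^\perp$ and is the identity on the remaining ones. Since by \eqref{eq::defH} the complex structure $H$ preserves each such two-plane, this coordinate projection intertwines $H$ and $d_e\pi$ is complex-linear. Because this last computation never uses that $\varphi$ is the co-vector of the source orbit, the proposition applies verbatim after relabelling $w$ as its base point and $v$ as its target, and no genuinely new estimate is required.
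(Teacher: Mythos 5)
Your proposal is correct and is essentially the paper's own argument: the paper derives this corollary in one line by applying Proposition \ref{prop:pseudoholo} with the roles of source and target swapped, taking the new co-vector in the interior of $C$ (orthogonal to no roots) so that the hypothesis of the proposition holds vacuously and the stabilizer of $w$ is a maximal torus. Your write-up merely fills in the details the paper leaves implicit (positivity via \eqref{eq::psiscalalpha}, the stabilizer identification via Proposition \ref{prop::decompvplusm}, and the group-level containment $T \subset V$), all consistently with the paper's reasoning.
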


Note that in general, $w$ as in the statement above is far from being unique, in that it corresponds to choosing $\tilde \varphi$ in an open dense subset of $C$.
On the other hand, sometimes one can choose $\tilde \varphi \in C$ in a very special position so that Proposition above yields

\begin{cor}\label{cor::G/TG/VG/K}
If there is a nonzero $\tilde \varphi \in C$ which is orthogonal to all compact roots, then there exists $v_0 \in \mathfrak g$ whose stabilizer is a maximal compact $K \subset G$ such that $V \subset K$.
Once the orbit $G/K$ is equipped with the invariant almost complex structure induced by $H$, the natural projection
\begin{equation*}
G/V \to G/K
\end{equation*}
is pseudo-holomorphic.
\end{cor}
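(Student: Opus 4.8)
The plan is to deduce this corollary directly from Proposition \ref{prop:pseudoholo}, taking $\tilde\varphi$ to be the given nonzero element of $C$ orthogonal to all compact roots, setting $v_0 = \tilde v = -i h_{\tilde\varphi}$ and $K = \tilde V$, and then upgrading the general conclusion of that proposition to the statement that $K$ is \emph{maximal} compact. First I would verify the hypothesis of Proposition \ref{prop:pseudoholo}, namely that $(\tilde\varphi,\alpha)=0$ for every root $\alpha$ with $(\varphi,\alpha)=0$. This is immediate: by Proposition \ref{prop::decompvplusm} every root orthogonal to $\varphi$ is compact, and $\tilde\varphi$ is orthogonal to all compact roots by assumption. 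Hence Proposition \ref{prop:pseudoholo} applies and already yields $V \subseteq K$ together with the pseudo-holomorphicity of the projection $G/V \to G/K$.

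It then remains to prove that the Lie algebra $\tilde{\mathfrak v}$ of $K$ coincides with a maximal compact subalgebra $\mathfrak k \supseteq \mathfrak v$. By Proposition \ref{prop::decompvplusm} applied to $\tilde v$ one has
\begin{equation*}
\tilde{\mathfrak v} = \mathfrak h_0 \oplus \sum_{\alpha \in \Delta^+ \cap \tilde\varphi^\perp} \Span\{u_\alpha,v_\alpha\},
\end{equation*}
while the maximal compact subalgebra coming from the Cartan decomposition is
\begin{equation*}
\mathfrak k = \mathfrak h_0 \oplus \sum_{\alpha \in \Delta^+,\ \varepsilon_\alpha = -1} \Span\{u_\alpha,v_\alpha\},
\end{equation*}
since the root spaces contained in $\mathfrak k_c$ are exactly those of the compact roots. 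Because $\tilde\varphi$ is orthogonal to all compact roots, every compact positive root lies in $\tilde\varphi^\perp$, whence $\mathfrak k \subseteq \tilde{\mathfrak v}$. The whole content of the statement is therefore the reverse inclusion, that is, the assertion that no noncompact root is orthogonal to $\tilde\varphi$, so that $\Delta^+ \cap \tilde\varphi^\perp$ is precisely the set $\Delta^+_c$ of compact positive roots.

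I expect this reverse inclusion to be the main obstacle, as it is the only step requiring genuine structure theory rather than the formal output of the two propositions. The condition that $\tilde\varphi$ be orthogonal to all compact roots means exactly that $\tilde\varphi$ lies in $W = (\Span \Delta_c)^\perp \subset \mathfrak h_{\mathbf R}^*$, where $\Delta_c = \{\alpha \in \Delta \mid \varepsilon_\alpha = -1\}$; under $\psi \mapsto h_\psi$ this corresponds to the center of $\mathfrak k$ inside $\mathfrak h_0$, and the existence of a nonzero such $\tilde\varphi$ forces that center to be nontrivial. For $\mathfrak g$ simple this can occur only in the Hermitian case, where the center of $\mathfrak k$ is one-dimensional and is spanned by a grading element under which every noncompact root has nonzero weight; equivalently, no noncompact root lies in $\Span \Delta_c = W^\perp$, so the nonzero element $\tilde\varphi$ of the line $W$ is orthogonal to no noncompact root. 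For $\mathfrak g$ merely semisimple I would argue one simple factor at a time, choosing $\tilde\varphi$ in the relative interior of the cone $W \cap C$ so as to avoid the finitely many hyperplanes $\beta^\perp$ with $\beta$ noncompact, none of which can contain all of $W \cap C$. Once $\Delta^+ \cap \tilde\varphi^\perp = \Delta^+_c$ is established, one obtains $\tilde{\mathfrak v} = \mathfrak k$, so $K$ is maximal compact and the corollary follows.
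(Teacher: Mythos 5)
Your derivation of $V \subseteq K$ and of the pseudo-holomorphicity of $G/V \to G/K$ is exactly the paper's (implicit) proof: the corollary is presented there as an immediate application of Proposition \ref{prop:pseudoholo}, and your verification of its hypothesis --- every root orthogonal to $\varphi$ is compact by Proposition \ref{prop::decompvplusm}, hence orthogonal to $\tilde\varphi$ --- is the intended one. The paper says nothing more, so the question of why $\tilde V$ is actually a \emph{maximal} compact subgroup, i.e.\ why $\Delta^+ \cap \tilde\varphi^\perp$ equals $\Delta^+_c$ rather than merely containing it, is left unaddressed there. You are right to isolate this as the genuine content, and your treatment of the simple case is correct: a nonzero $\tilde\varphi$ orthogonal to all compact roots corresponds to a nonzero element of $\mathfrak z(\mathfrak k)$, which for $\mathfrak g$ simple forces the Hermitian symmetric situation, where the grading element pairs nontrivially with every non-compact root.

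The gap is in your last step. The assertion that no hyperplane $\beta^\perp$, with $\beta$ non-compact, can contain all of $W \cap C$ is false for general semisimple $\mathfrak g$. Take $\mathfrak g = \mathfrak{su}(1,1) \oplus \mathfrak g_{2(2)}$, with $v$ a regular element of a compact Cartan subalgebra so that $V$ is a maximal torus. The compact roots span the whole Cartan dual of the factor $\mathfrak g_{2(2)}$ (its maximal compact subalgebra $\mathfrak{su}(2)\oplus\mathfrak{su}(2)$ is semisimple of full rank), while $\mathfrak{su}(1,1)$ has no compact roots; hence $W = \mathfrak h_{\mathbf R,1}^* \oplus 0$ and $W \cap C \neq 0$, so the hypothesis of the corollary holds. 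But every $\tilde\varphi \in W$ is orthogonal to \emph{all} roots of $\mathfrak g_{2(2)}$, including the non-compact ones, so $\tilde{\mathfrak v}$ always contains the whole non-compact factor $\mathfrak g_{2(2)}$; no perturbation inside $W \cap C$ can fix this, and indeed no $v_0 \in \mathfrak g$ whatsoever has maximal compact stabilizer, since such a $v_0$ would have to lie in $\mathfrak z(\mathfrak k) = \mathfrak{so}(2)\oplus 0$ and would then centralize $\mathfrak g_{2(2)}$. So the statement itself is only correct as written when, say, $\mathfrak g$ is simple (or every non-compact simple factor is of Hermitian type with suitably adapted positive system); your proof works in that regime, but the reduction ``one simple factor at a time'' cannot be completed in general, and you should either add such a hypothesis or strengthen ``orthogonal to all compact roots'' to ``orthogonal to precisely the compact roots''.
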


We end this section by noting that the canonical almost complex structure $J$ on $G/V$ is compatibile with the Kirillov-Kostant-Souriau symplectic form $\omega$. This follows by Remark \ref{rem::equivtoADV} and \cite[Subsection 4.2]{DellaVedova2017}. For convenience of the reader, we repeat here the argument.

\begin{prop}\label{Porp::compatibilityJ}
The canonical almost complex structure $J$ on $G/V$ is compatibile with the Kirillov-Kostant-Souriau symplectic form $\omega$.
\end{prop}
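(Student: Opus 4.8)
The plan is to check directly the two defining conditions of compatibility, namely that $\omega$ is $J$-invariant and that the bilinear form $g(\cdot,\cdot) := \omega(\cdot,J\cdot)$ is symmetric and positive-definite, doing everything at the identity coset $e \in G/V$. By homogeneity of both $\omega$ and $J$ it suffices to verify these conditions there, where the tangent space is $\mathfrak m$, the form $\omega$ is represented by $\sigma(x,y)=B(v,[x,y])$, and $J$ is represented by the restriction of $H$ to $\mathfrak m$. Throughout I would work in the $B$-orthogonal basis $\{u_\alpha,v_\alpha\}_{\alpha\in\Delta^+\setminus\varphi^\perp}$ of $\mathfrak m$ provided by Proposition \ref{prop::decompvplusm}, so that the verification reduces to a finite computation of matrix entries.

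First I would rewrite $\sigma$ using the $\ad$-invariance of the Killing form, which gives $\sigma(x,y)=B(v,[x,y])=B([v,x],y)$. Combining this with item \ref{item::[v,ualpha]} of Lemma \ref{lem::advonualphaandvalpha}, namely $[v,u_\alpha]=\lambda_\alpha v_\alpha$ and $[v,v_\alpha]=-\lambda_\alpha u_\alpha$, together with the orthogonality relations in items \ref{item::Bualphaubeta} and \ref{item::Bualphavbeta}, the only non-vanishing entries among the basis vectors are
\begin{equation*}
\sigma(u_\alpha,v_\alpha)=B(\lambda_\alpha v_\alpha,v_\alpha)=\lambda_\alpha\varepsilon_\alpha,
\end{equation*}
together with the skew counterpart $\sigma(v_\alpha,u_\alpha)=-\lambda_\alpha\varepsilon_\alpha$; here I use that $\alpha,\beta\in\Delta^+$ forces $\delta_{\alpha,-\beta}=0$. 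All mixed pairings $\sigma(u_\alpha,u_\beta)$ and $\sigma(v_\alpha,v_\beta)$ vanish, and so do those for $\alpha\neq\beta$.

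Next I would compute $g(x,y)=\sigma(x,Hy)$ using $Hu_\beta=\varepsilon_\beta v_\beta$ and $Hv_\beta=-\varepsilon_\beta u_\beta$ from \eqref{eq::defH}. A short calculation yields $g(u_\alpha,u_\beta)=\varepsilon_\beta\sigma(u_\alpha,v_\beta)=\lambda_\alpha\delta_{\alpha\beta}$ and likewise $g(v_\alpha,v_\beta)=\lambda_\alpha\delta_{\alpha\beta}$, while the mixed entries $g(u_\alpha,v_\beta)$ and $g(v_\alpha,u_\beta)$ vanish. Hence $g$ is diagonal in the chosen basis, with all diagonal entries equal to $\lambda_\alpha$; these are \emph{strictly} positive precisely because $\alpha\in\Delta^+\setminus\varphi^\perp$ gives $\lambda_\alpha=(\alpha,\varphi)>0$. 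Therefore $g$ is symmetric and positive-definite, which is exactly the content of positivity. Finally, $J$-invariance of $\omega$ follows formally from symmetry of $g$ and $H^2=-\mathrm{id}$, via $\sigma(Hx,Hy)=g(Hx,y)=g(y,Hx)=\sigma(y,H^2x)=-\sigma(y,x)=\sigma(x,y)$.

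I do not expect a genuine obstacle here: the computation is routine, and the only thing to watch is the bookkeeping of the signs $\varepsilon_\alpha$, which cancel in pairs since $\varepsilon_\alpha^2=1$. The conceptual reason this works so cleanly is that on $\mathfrak m$ both $\sigma$ and $H$ are built from the single operator $\ad(v)$ acting block-diagonally on the planes $\Span\{u_\alpha,v_\alpha\}$ (compare the formula \eqref{eq::Hxdecomp} in Remark \ref{rem::equivtoADV}), so the two structures are automatically intertwined and the metric $g$ is forced to be positive on each block.
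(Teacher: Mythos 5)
Your proposal is correct and follows essentially the same route as the paper: both reduce to the identity coset, use the $\ad$-invariance of $B$ together with items \ref{item::Bualphaubeta}, \ref{item::Bualphavbeta} and \ref{item::[v,ualpha]} of Lemma \ref{lem::advonualphaandvalpha} to diagonalize everything over the planes $\Span\{u_\alpha,v_\alpha\}$, and conclude positivity from $\lambda_\alpha=(\varphi,\alpha)>0$ on $\Delta^+\setminus\varphi^\perp$. The only difference is presentational — you compute matrix entries in the basis $\{u_\alpha,v_\alpha\}$ while the paper works with the block components $x_\alpha$ via \eqref{eq::Hxdecomp} — and your entries agree with Corollary \ref{cor::symplecticorthonormalbasis}.
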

\begin{proof}
Since both $\omega$ and $J$ are homogeneous, it is enough to check their compatibility at the identity coset $e \in G/V$. Thus we are reduced to prove the compatibility of the complex structure $H$ on $\mathfrak m$ with the linear symplectic form $\sigma$.
For all $x \in \mathfrak m$, denote by $x_\alpha$ the component of $x$ along the real subspace spanned by $u_\alpha,v_\alpha$ with $\alpha \in \Delta^+ \setminus \varphi^\perp$, so that $Hx$ takes the form \eqref{eq::Hxdecomp}.
Therefore, by definition of $\sigma$ and $G$-invariance of the Killing form $B$, for all $x, y \in \mathfrak m$ one has
\begin{equation*}
\sigma(x,Hy) = \sum_{\alpha,\beta \in \Delta^+ \setminus \varphi^\perp} \frac{\varepsilon_\beta}{\lambda_\beta}B([v,x_\alpha],[v,y_\beta]).
\end{equation*}
Now observe that by items \ref{item::Bualphaubeta} and \ref{item::Bualphavbeta} of Lemma \ref{lem::advonualphaandvalpha} it follows that
\begin{equation}
\sigma(x,Hy) = \sum_{\alpha \in \Delta^+ \setminus \varphi^\perp} \frac{\varepsilon_\alpha}{\lambda_\alpha}B([v,x_\alpha],[v,y_\alpha]),
\end{equation}
which is clearly symmetric in $x$ and $y$, whence $\sigma(x,Hy) + \sigma(Hx,y)=0$.
Moreover, taking $y=x$ and writing $x_\alpha = x_\alpha^u u_\alpha + x_\alpha^v v_\alpha$, by items \ref{item::Bualphaubeta} and \ref{item::[v,ualpha]} of Lemma \ref{lem::advonualphaandvalpha} it follows
\begin{equation}
\sigma(x,Hx) = \sum_{\alpha \in \Delta^+ \setminus \varphi^\perp} \lambda_\alpha \left( (x_\alpha^u)^2 + (x_\alpha^v)^2\right),
\end{equation}
which is positive as soon as $x$ is non-zero.
\end{proof}

As a consequence of the proof and Lemma \ref{lem::advonualphaandvalpha} one has the following

\begin{cor}\label{cor::symplecticorthonormalbasis}
The vectors $(1/\sqrt{\lambda_\alpha}) u_\alpha$, $(\varepsilon_\alpha/\sqrt{\lambda_\alpha}) v_\alpha$ with $\alpha \in \Delta^+ \setminus \varphi^\perp$ constitute a symplectic basis of $\mathfrak m$ which is also orthonormal with respect to the scalar product induced by $H$ and $\sigma$.
\end{cor}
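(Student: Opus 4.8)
The plan is to verify the two assertions separately, reducing everything to formulas already established in the proof of Proposition \ref{Porp::compatibilityJ}. Throughout, write $g(\cdot,\cdot) = \sigma(\cdot,H\cdot)$ for the induced scalar product, and set $\tilde u_\alpha = \lambda_\alpha^{-1/2}u_\alpha$ and $\tilde v_\alpha = \varepsilon_\alpha\lambda_\alpha^{-1/2}v_\alpha$ for $\alpha\in\Delta^+\setminus\varphi^\perp$; these are well defined precisely because $\lambda_\alpha>0$ on this set. That they span $\mathfrak m$ is immediate from Proposition \ref{prop::decompvplusm}, so once orthonormality (hence linear independence) is checked they automatically form a basis. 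The first small observation I would record is how $H$ acts on the normalized vectors: from \eqref{eq::defH} and $\varepsilon_\alpha^2=1$ one gets $H\tilde u_\alpha = \tilde v_\alpha$ and $H\tilde v_\alpha = -\tilde u_\alpha$, so each pair spans an $H$-invariant plane in standard form.

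Next I would establish orthonormality using the block-diagonal expression $\sigma(x,Hy)=\sum_{\alpha}(\varepsilon_\alpha/\lambda_\alpha)B([v,x_\alpha],[v,y_\alpha])$ from the proof of Proposition \ref{Porp::compatibilityJ}, combined with $[v,u_\alpha]=\lambda_\alpha v_\alpha$, $[v,v_\alpha]=-\lambda_\alpha u_\alpha$ (item \ref{item::[v,ualpha]} of Lemma \ref{lem::advonualphaandvalpha}) and $B(u_\alpha,u_\alpha)=B(v_\alpha,v_\alpha)=\varepsilon_\alpha$, $B(u_\alpha,v_\alpha)=0$ (items \ref{item::Bualphaubeta} and \ref{item::Bualphavbeta}). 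A direct substitution gives $g(u_\alpha,u_\alpha)=g(v_\alpha,v_\alpha)=\lambda_\alpha$ and $g(u_\alpha,v_\alpha)=0$ for each single root, while the block-diagonality of the formula kills every cross term between distinct roots. Dividing by the normalizing factors then yields $g(\tilde u_\alpha,\tilde u_\beta)=g(\tilde v_\alpha,\tilde v_\beta)=\delta_{\alpha\beta}$ and $g(\tilde u_\alpha,\tilde v_\beta)=0$, which is exactly orthonormality.

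Finally, the symplectic relations follow formally from orthonormality together with the identity $\sigma(x,Hy)=g(x,y)$ and the $H$-action recorded above. Indeed one has $\sigma(\tilde u_\alpha,\tilde v_\beta)=\sigma(\tilde u_\alpha,H\tilde u_\beta)=g(\tilde u_\alpha,\tilde u_\beta)=\delta_{\alpha\beta}$, then $\sigma(\tilde u_\alpha,\tilde u_\beta)=-\sigma(\tilde u_\alpha,H\tilde v_\beta)=-g(\tilde u_\alpha,\tilde v_\beta)=0$, and likewise $\sigma(\tilde v_\alpha,\tilde v_\beta)=\sigma(\tilde v_\alpha,H\tilde u_\beta)=g(\tilde v_\alpha,\tilde u_\beta)=0$. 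These are precisely the defining relations of a symplectic basis, completing the proof.

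I do not expect a genuine obstacle here: the argument is essentially bookkeeping on top of the two-dimensional blocks, and the conceptual content is just that a $g$-orthonormal frame of the form $\{e,He\}$ is automatically symplectic for a compatible pair $(\sigma,H)$. The one point requiring care is the interplay between the signs $\varepsilon_\alpha$ and the normalization $\lambda_\alpha^{-1/2}$: the factor $\varepsilon_\alpha$ built into $\tilde v_\alpha$ is exactly what makes $H\tilde u_\alpha=\tilde v_\alpha$ come out with a plus sign and cancels the $\varepsilon_\alpha$ occurring both in $B(v_\alpha,v_\alpha)$ and in the formula for $\sigma(\cdot,H\cdot)$. I would double-check those signs explicitly but expect nothing worse.
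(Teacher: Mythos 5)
Your proposal is correct and follows essentially the same route as the paper, which derives the corollary directly from the block-diagonal formula $\sigma(x,Hy)=\sum_{\alpha}(\varepsilon_\alpha/\lambda_\alpha)B([v,x_\alpha],[v,y_\alpha])$ obtained in the proof of Proposition \ref{Porp::compatibilityJ} together with Lemma \ref{lem::advonualphaandvalpha}; your sign bookkeeping (in particular the role of the factor $\varepsilon_\alpha$ in normalizing $v_\alpha$ so that $H\tilde u_\alpha=\tilde v_\alpha$) checks out. The paper leaves these verifications implicit, so your write-up is simply a fully spelled-out version of the intended argument.
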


\section{The condition $\rho = \lambda \omega$}\label{sec::spacialitycondition}

Let $(M,\omega)$ be a symplectic manifold, and let $J$ be a compatible almost complex structure on it.  
The Chern-Ricci form of $J$ is a closed two-form $\rho$ on $M$ defined as follows. 
Consider the Chern connection of $J$, that is the unique affine connection $\nabla$ on $M$ which satisfies $\nabla \omega = 0$, $\nabla J = 0$ and whose torsion coincides with the Nijenhuis tensor $N$ of $J$ (see Section \ref{sec::Nijenhuis} for the definition of $N$).
The curvature $R$ of $\nabla$ is then a two-form with values in $\End(TM)$, and $\rho$ is defined by the identity
\begin{equation*}
\rho(X,Y) = \tr(JR(X,Y)).
\end{equation*}

A this point one can ask if the equation $\rho = \lambda \omega$ is satisfied for some constant $\lambda$.
If this is the case and in addition $J$ is integrable, then $(M,\omega,J)$ is K\"ahler-Einstein.
In particular, the Riemannian metric $g$ associated to $\omega$ and $J$ is both K\"ahler and Einstein.
On the other hand, the terminology when $J$ is not assumed to be integrable is less standard. 
Indeed, a general almost K\"ahler manifold $(M,\omega,J)$ which satisfies $\rho = \lambda \omega$ is sometimes called \emph{Hermitian-Einstein}, or \emph{special} \cite{DellaVedova2017}, or \emph{Chern-Einstein} \cite{AlekseevskyPodesta2018}.
Note that in this case, the metric $g$ associated to $\omega$ and $J$ is rarely Einstein.

In this section we consider an adjoint orbit $(G/V,\omega,J)$ equipped with its canonical almost-K\"ahler structure (see Section \ref{sec::structureofg} and Definition \ref{defn::cancomplexstructure}) and we characterize when it satisfies the condition $\rho = \lambda \omega$.

As shown in \cite[Subsection 4.2]{DellaVedova2017}, in the same way as the symplectic form $\omega$ is determined by $\sigma$ and $v$, the Chern-Ricci form $\rho$ of $J$ is induced by the two form on $\mathfrak m$ defined by $B(v',[x,y])$, where $v'= 2 \sum_{\alpha \in \Delta^+ \setminus \varphi^\perp} [u_\alpha,v_\alpha]$.
By definition \eqref{eq::uandvintermsofe} of $u_\alpha,v_\alpha$, the latter turns out to be equal to
\begin{equation}\label{eq::v'sumofroots}
v' 
= -2i \sum_{\alpha \in \Delta^+ \setminus \varphi^\perp} \varepsilon_\alpha h_\alpha.
\end{equation}
As a consequence, letting
\begin{equation}\label{eq::definitionvarphi'}
\varphi' = - 2 \sum_{\alpha \in \Delta^+ \setminus \varphi^\perp} \varepsilon_\alpha \alpha
\end{equation}
defines an element of the root lattice $\varphi' \in \mathfrak h_{\mathbf R}^*$ such that $h_{\varphi'} = -iv'$.
The condition on the Chern-Ricci form $\rho = \lambda \omega$ is then equivalent to $v' = \lambda v$ and also to $\varphi' = \lambda \varphi$.
Note that $\varphi'$ depends \emph{discretely} on $\varphi$ in the sense that it is only sensitive to which (necessarily compact) roots $\varphi$ is orthogonal to, but not to the distance of $\varphi$ from those roots.

This equation can be made more tractable from the combinatorial point of view by introducing the element of the root lattice $\eta \in \mathfrak h_{\mathbf R}^*$ defined by
\begin{equation*}
\eta = -2 \sum_{\alpha \in \Delta^+} \varepsilon_\alpha \alpha.
\end{equation*}
Note that it depends only on the semisimple Lie algebra $\mathfrak g$ and on the chosen set of positive roots.
Clearly one can write the difference $\eta - \varphi'$ as a linear combination of roots.
Therefore, recalling that no non-compact roots can be orthogonal to $\varphi$, the condition $\rho = \lambda \omega$ takes the form
\begin{equation}\label{eq::rho=lambdaomegaintermsoftau}
\eta - 2 \sum_{\alpha \in \Delta^+ \cap \varphi^\perp} \alpha = \lambda \varphi.
\end{equation}
Since $\varphi$ cannot be zero, and the sum is performed over all roots $\alpha$ which are orthogonal to $\varphi$, one readily gets the following
\begin{lem}\label{lem::lambdaintermsofvarphiandtheta}
If $\varphi'= \lambda \varphi$, then $\lambda = (\eta,\varphi) / |\varphi|^2$.
\end{lem}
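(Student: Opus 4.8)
The plan is to pair both sides of the condition $\varphi' = \lambda\varphi$ with the vector $\varphi$ itself and to exploit the orthogonality built into the index set of the defining sum. First I would rewrite the hypothesis in the more convenient form \eqref{eq::rho=lambdaomegaintermsoftau}, namely
\begin{equation*}
\eta - 2 \sum_{\alpha \in \Delta^+ \cap \varphi^\perp} \alpha = \lambda \varphi,
\end{equation*}
which is equivalent to $\varphi' = \lambda\varphi$ by the computation preceding the statement (using that every root in $\Delta^+ \cap \varphi^\perp$ is compact, so that $\varepsilon_\alpha = -1$ and the factors $\varepsilon_\alpha$ collapse into an overall sign).

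Next I would apply the scalar product $(\,\cdot\,,\varphi)$, with respect to the positive definite form on $\mathfrak h_{\mathbf R}^*$, to both sides. On the right-hand side this produces $\lambda(\varphi,\varphi) = \lambda|\varphi|^2$. On the left-hand side the contribution of the sum is $2\sum_{\alpha \in \Delta^+ \cap \varphi^\perp}(\alpha,\varphi)$, and each summand vanishes: by the very definition of $\varphi^\perp$, every root $\alpha$ occurring in the sum satisfies $(\alpha,\varphi)=0$. Hence the left-hand side reduces to $(\eta,\varphi)$, and I obtain the identity $(\eta,\varphi) = \lambda|\varphi|^2$.

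Finally, since $\varphi$ is non-zero and the form is positive definite on $\mathfrak h_{\mathbf R}^*$, we have $|\varphi|^2 > 0$, so dividing yields $\lambda = (\eta,\varphi)/|\varphi|^2$, as claimed. I do not anticipate any genuine obstacle: the argument amounts to a single pairing, and the only point worth emphasizing is that the entire correction sum disappears under $(\,\cdot\,,\varphi)$ precisely because the roots over which it ranges are, by construction, orthogonal to $\varphi$; this is exactly why $\lambda$ ends up depending only on $\eta$ and $\varphi$ and not on the individual roots in $\Delta^+ \cap \varphi^\perp$.
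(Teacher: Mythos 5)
Your proposal is correct and is exactly the argument the paper intends: the lemma is stated as an immediate consequence of \eqref{eq::rho=lambdaomegaintermsoftau}, obtained by pairing with $\varphi$ and noting that the correction sum ranges over roots orthogonal to $\varphi$ while $|\varphi|^2>0$. Nothing is missing.
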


Up to now we considered $v$, hence $\varphi$, as given and fixed.
On the other hand, in this section we are primarily interested in solving the equation $\rho = \lambda \omega$, that is in finding $\varphi$ such that $\varphi' = \lambda \varphi$.
In this perspective, the next result guarantees that the sign of the constant $\lambda$ is uniquely determined by the semisimple Lie algebra $\mathfrak g$ (hence by the group $G$), and by the choice of the dominant Weyl chamber $C$ (hence by the choice of the set of positive roots $\Delta^+$).

\begin{prop}\label{prop::uniquenesstype}
Let $\varphi, \psi \in \mathfrak h_{\mathbf R}^*$ be elements belonging to the dominant Weyl chamber $C$, suppose that both $ih_\varphi, ih_\psi \in \mathfrak g$ have compact isotropy, and that $\varphi'=\lambda \varphi$, $\psi'=\mu \psi$ for some real constants $\lambda, \mu$. Then $\lambda$ and $\mu$ have the same sing. Moreover, $\varphi$ and $\psi$ are multiple each other whenever $\lambda,\mu<0$.
\end{prop}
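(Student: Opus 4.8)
The plan is to pair the two defining equations against each other, so that all the combinatorics is absorbed into two manifestly nonnegative quantities and the conclusion becomes a statement about the Euclidean geometry of the pair $\varphi,\psi$.

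First I would put the hypotheses in the shape of \eqref{eq::rho=lambdaomegaintermsoftau}, writing
\begin{equation*}
\lambda\varphi = \eta - 2\!\!\sum_{\alpha\in\Delta^+\cap\varphi^\perp}\!\!\alpha, \qquad \mu\psi = \eta - 2\!\!\sum_{\alpha\in\Delta^+\cap\psi^\perp}\!\!\alpha,
\end{equation*}
recalling from Proposition \ref{prop::decompvplusm} that the roots orthogonal to $\varphi$ (resp. $\psi$) are compact. Taking the scalar product of the first with $\psi$ and of the second with $\varphi$, and using $(\eta,\psi)=\mu|\psi|^2$ and $(\eta,\varphi)=\lambda|\varphi|^2$ from Lemma \ref{lem::lambdaintermsofvarphiandtheta}, I obtain
\begin{align*}
\mu|\psi|^2 - \lambda(\varphi,\psi) &= 2\!\!\sum_{\alpha\in\Delta^+\cap\varphi^\perp}\!\!(\alpha,\psi) \geq 0,\\
\lambda|\varphi|^2 - \mu(\varphi,\psi) &= 2\!\!\sum_{\alpha\in\Delta^+\cap\psi^\perp}\!\!(\alpha,\varphi) \geq 0,
\end{align*}
the inequalities holding because $\varphi,\psi\in C$. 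Abbreviating the two sums by $P,Q\geq 0$ and writing $a=|\varphi|^2$, $b=|\psi|^2$, $c=(\varphi,\psi)$, these read $\mu b-\lambda c=2P$ and $\lambda a-\mu c=2Q$. I would also record that $c\geq 0$: since $C$ is the cone on the fundamental weights $\varphi_i$ (Lemma \ref{lem::vassumoffunddomweights}), and these have pairwise nonnegative scalar products, any two elements of $C$ pair nonnegatively; Cauchy--Schwarz then gives $c^2\leq ab$, with equality exactly when $\varphi,\psi$ are parallel.

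Eliminating $\mu$, respectively $\lambda$, between the two identities yields the master relations
\begin{equation*}
\lambda(ab-c^2) = 2(Pc+Qb), \qquad \mu(ab-c^2) = 2(Pa+Qc),
\end{equation*}
whose right-hand sides are nonnegative. If $\varphi,\psi$ are not parallel then $ab-c^2>0$ and both $\lambda,\mu\geq 0$; in particular a negative value of either constant can occur only in the parallel case, which already yields the ``moreover'' assertion. In the parallel case $\psi=t\varphi$ with $t>0$, so $\psi^\perp=\varphi^\perp$, whence $\psi'=\varphi'$ and $\lambda=t\mu$: the signs agree and $\varphi,\psi$ are multiples of each other. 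It thus remains, in the non-parallel case, to promote ``both $\geq 0$'' to ``same sign'', i.e. to prove $\lambda=0\iff\mu=0$; from the master relations this is immediate as soon as $c>0$, since then each of $\lambda=0$ and $\mu=0$ forces $P=Q=0$.

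The one remaining configuration, and the main obstacle, is $c=(\varphi,\psi)=0$ with $\varphi,\psi$ non-parallel, where the master relations leave open the possibility that one constant vanishes while the other is positive. Here I would finally invoke the compact-isotropy hypothesis, unused so far. Orthogonality of two nonzero elements of $C$ forces their supports, expressed in fundamental weights, to lie in disjoint irreducible components of the root system; hence every root on the components carrying $\psi$ is orthogonal to $\varphi$ and therefore compact by Proposition \ref{prop::decompvplusm}. Those components are thus of compact type, so on them every root is compact and $\eta$ restricts to $2\sum_{\alpha>0}\alpha$, four times the Weyl vector, a strictly dominant direction. As $\psi$ pairs with $\eta$ only through these components, Lemma \ref{lem::lambdaintermsofvarphiandtheta} gives $\mu=(\eta,\psi)/|\psi|^2>0$, and symmetrically $\lambda>0$. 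This excludes the vanishing of either constant and closes the argument. I expect the reduction ``$c=0$ implies disjoint irreducible supports'', together with the bookkeeping needed when $\mathfrak g$ is semisimple rather than simple, to be the only genuinely delicate point.
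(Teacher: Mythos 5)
Your argument is correct, and its skeleton coincides with the paper's: both proofs pair the two identities $\eta - 2\sum_{\alpha\in\Delta^+\cap\varphi^\perp}\alpha=\lambda\varphi$ and $\eta - 2\sum_{\alpha\in\Delta^+\cap\psi^\perp}\alpha=\mu\psi$ against $\psi$ and $\varphi$ to obtain $\mu|\psi|^2-\lambda(\varphi,\psi)\geq 0$ and $\lambda|\varphi|^2-\mu(\varphi,\psi)\geq 0$. The differences are in how the conclusions are extracted. The paper asserts $(\varphi,\psi)>0$ outright, gets the sign statement by a short contradiction argument, and gets the ``moreover'' part from the identity $|\lambda\varphi-\mu\psi|^2=2\mu\sum_{\alpha\in\Delta^+\cap\varphi^\perp}(\alpha,\psi)+2\lambda\sum_{\alpha\in\Delta^+\cap\psi^\perp}(\alpha,\varphi)$, whose left side is nonnegative and whose right side is nonpositive when $\lambda,\mu<0$. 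You instead form the Gram-determinant relations $\lambda(ab-c^2)=2(Pc+Qb)$ and $\mu(ab-c^2)=2(Pa+Qc)$ and read both conclusions off Cauchy--Schwarz; this is an equivalent amount of algebra and in fact gives the marginally stronger statement that $\varphi,\psi$ are already parallel when just one of $\lambda,\mu$ is negative. The one substantive divergence is your treatment of $(\varphi,\psi)=0$: the paper's claim that $(\varphi,\psi)>0$ follows merely from $\varphi,\psi\in C$ is not justified as written (for a reducible root system two nonzero dominant vectors can be orthogonal), and you are right that the compact-isotropy hypothesis is what rules this out. Your case analysis there is sound, but it can be shortened: as you observe, orthogonality forces $\psi$ to be supported on irreducible components all of whose roots are compact, and then every non-compact simple root is orthogonal to $\psi$, contradicting compactness of the stabilizer of $ih_\psi$; so the case is vacuous rather than one in which $\lambda,\mu>0$. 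Either way, your argument closes it correctly.
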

\begin{proof}
By \eqref{eq::rho=lambdaomegaintermsoftau} we can rewrite $\varphi'=\lambda\varphi$, $\psi'=\mu\psi$ in the form
\begin{equation}
\eta - 2 \sum_{\alpha \in \Delta^+ \cap \varphi^\perp} \alpha = \lambda \varphi,
\qquad
\eta - 2 \sum_{\alpha \in \Delta^+ \cap \psi^\perp} \alpha = \mu \psi.
\end{equation}
By Lemma \ref{lem::lambdaintermsofvarphiandtheta}, taking scalar products with $\psi$ and $\varphi$ respectively yields
\begin{equation}
\mu |\psi|^2 - \lambda (\varphi,\psi) = \sum_{\alpha \in \Delta^+ \cap \varphi^\perp} 2(\alpha,\psi),
\quad
\lambda |\varphi|^2 - \mu (\psi,\varphi) = \sum_{\alpha \in \Delta^+ \cap \psi^\perp} 2(\alpha,\varphi).
\end{equation}
Since $\varphi$ and $\psi$ belong to $C$, one has $(\alpha,\varphi) \geq 0$, $(\alpha,\psi) \geq 0$ for all $\alpha \in \Delta^+$ and $(\varphi,\psi) > 0$.
In particular, the right hand sides of identities above are non-negative.
As a consequence, it is easy to check by contradiction that $\lambda$ and $\mu$ must have the same sign (including zero).

Finally, note that a linear combination of identities above yields
\begin{equation}
\sum_{\alpha \in \Delta^+ \cap \varphi^\perp} 2\mu (\alpha,\psi) 
+ \sum_{\alpha \in \Delta^+ \cap \psi^\perp} 2\lambda (\alpha,\varphi) = |\lambda \varphi - \mu \psi|^2,
\end{equation}
whence it follows that $\lambda \varphi = \mu \psi$ as soon as $\lambda, \mu$ are both negative.
\end{proof}

The uniqueness up to scaling of the solution of the equation $\varphi' = \lambda \varphi$ is a special feature of the case $\lambda <0$.
In case $\lambda>0$, one has just finiteness up to scaling of the set of solutions, in accordance with the next Proposition and the symplectic Fano examples in section \ref{sec::Vogandiagrams}.
Finally, we anticipate here that in case $\lambda=0$ also finiteness up to scaling fails in accordance with Theorem \ref{thm::specialvCY} below.

\begin{prop}\label{prop::finitenessuptoscaling}
Up to scaling, the set of all $\varphi \in \mathfrak h_{\mathbf R}^*$ belonging to the dominant Weyl chamber $C$, such that $v=ih_\varphi$ has compact isotropy, and satisfying $\varphi'=\lambda \varphi$ for some $\lambda \neq 0$ is finite.
\end{prop}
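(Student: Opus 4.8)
The plan is to exploit the discreteness of $\varphi'$ as a function of $\varphi$, a feature already emphasized right after \eqref{eq::definitionvarphi'}. Indeed, by its very definition the covector $\varphi' = -2\sum_{\alpha \in \Delta^+ \setminus \varphi^\perp}\varepsilon_\alpha \alpha$ is completely determined by the set of positive roots orthogonal to $\varphi$, namely $S_\varphi := \Delta^+ \cap \varphi^\perp$, and it is insensitive to the precise position of $\varphi$ otherwise. Since $\Delta^+$ is a finite set, there are only finitely many subsets $S \subseteq \Delta^+$, and hence only finitely many possible values that $\varphi'$ can assume.

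First I would stratify the dominant Weyl chamber by setting, for each $S \subseteq \Delta^+$,
\[
C_S = \{\varphi \in C : \Delta^+ \cap \varphi^\perp = S\},
\]
so that $C \setminus \{0\}$ is the disjoint union of the finitely many strata $C_S$, on each of which $\varphi'$ is constant and equal to the fixed element $\varphi'_S := -2\sum_{\alpha \in \Delta^+ \setminus S}\varepsilon_\alpha \alpha$. Next I would observe that if $\varphi \in C_S$ solves $\varphi' = \lambda \varphi$ with $\lambda \neq 0$, then $\lambda \varphi = \varphi'_S$, so that $\varphi = \lambda^{-1}\varphi'_S$ is forced to lie on the single line $\mathbf{R}\,\varphi'_S$ (in particular $\varphi'_S \neq 0$, otherwise $\lambda\varphi = 0$ would give $\varphi = 0$). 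Thus all solutions contained in a given stratum $C_S$ are mutually proportional, whence there is at most one solution per stratum up to scaling. Letting $S$ range over the finitely many subsets of $\Delta^+$ then yields the finiteness claim.

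The hypothesis $\lambda \neq 0$ enters essentially: when $\lambda = 0$ the equation reduces to $\varphi'_S = 0$ and every $\varphi$ in such a stratum is a solution, producing infinitely many solutions up to scaling, consistent with the Calabi--Yau case treated in Theorem \ref{thm::specialvCY}. The compact-isotropy requirement on $v = ih_\varphi$ merely cuts down the admissible set further, so it does not interfere with finiteness. There is no serious obstacle in this argument; the only point that must be handled with care is the \emph{stratum-wise constancy} of $\varphi'$, after which finiteness is an immediate consequence of the finiteness of $\Delta^+$.
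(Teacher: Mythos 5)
Your proof is correct, but it takes a genuinely different route from the paper's. The paper normalizes the solution so that $|\lambda|=1$, rewrites the equation via \eqref{eq::rho=lambdaomegaintermsoftau} as $\eta - 2\sum_{\alpha \in \Delta^+\cap\varphi^\perp}\alpha = \pm\varphi$, observes that the left-hand side lies in the root lattice, and then bounds $|\varphi|$ by the triangle inequality using $|\varphi|\le|\eta|+\sum_{\alpha\in\Delta_c}|\alpha|$; finiteness follows because a bounded subset of a lattice is finite. You instead stratify $C\setminus\{0\}$ by the orthogonality pattern $S=\Delta^+\cap\varphi^\perp$, use the stratum-wise constancy of $\varphi'$ (exactly the ``discrete dependence'' the paper emphasizes after \eqref{eq::definitionvarphi'}), and conclude that each of the finitely many strata contributes at most one ray of solutions since $\varphi=\lambda^{-1}\varphi'_S$ is pinned to the line $\mathbf R\,\varphi'_S$. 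Your argument is more elementary (it never uses that $\varphi$ lands in the root lattice) and yields a cleaner count: at most one solution up to scaling per stratum, and by Lemma \ref{lem::vassumoffunddomweights} the relevant strata are indexed by subsets of the simple roots, so there are at most $2^\ell$ of them. The paper's argument buys the extra information that, after normalizing $|\lambda|=1$, every solution is a lattice point in an explicit ball, which is what makes the computer search of Section \ref{sec::Vogandiagrams} and the Appendix feasible. Both proofs correctly isolate $\lambda\neq 0$ as the essential hypothesis, and you are right that the compact-isotropy condition only shrinks the set and cannot spoil finiteness.
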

\begin{proof}
Given $\varphi$ as in the statement, after rescaling $\varphi$ to $|\lambda| \varphi$, by \eqref{eq::rho=lambdaomegaintermsoftau} one can write $\varphi' = \lambda \varphi$ in the form
\begin{equation}
\eta - 2 \sum_{\alpha \in \Delta^+ \cap \varphi^\perp} \alpha = \pm \varphi,
\end{equation}
whence it follows that $\varphi$ belongs to the root lattice.
As a consequence, recalling that by compactness of the isotropy group of $|\lambda|v$ no non-compact roots can be orthogonal to $\varphi$, by triangle inequality one has
\begin{equation*}
|\varphi| \leq |\eta| + \sum_{\alpha \in \Delta_c} |\alpha|,
\end{equation*}
where $\Delta_c \subset \Delta$ denotes the subset of compact roots.
Since the right hand side just depends on $\mathfrak g$, we can conclude that $\varphi$ has to belong to a subset of the root lattice which is bounded, hence finite.
\end{proof}

Finiteness up to scaling of the proposition above can be refined in the following result (recall that $\ell$ denotes the rank of $\mathfrak g$).

\begin{thm}\label{thm::specialvFGT}
An element $\varphi \in \mathfrak h_{\mathbf R}^*$ belongs to the dominant Weyl chamber $C$, the stabilizer of $v=ih_\varphi$ is compact, and one has $\varphi'=\lambda \varphi$ for some $\lambda \neq 0$ if and only if there exists a non-empty subset $S \subset \{ 1,\dots,\ell \}$ such that 
\begin{itemize}
\item all non-compact simple roots of $\Sigma^+$ belong to $\{\gamma_i | i \in S\}$, 
\item $\frac{1}{\lambda}\left(\eta - 2 \sum_{\alpha \in \Span\{ \gamma_i | i \in S^c\} \cap \Delta^+} \alpha \right)$ belongs to the open convex cone spanned by $\{\varphi_i | i \in S\}$,
\item $\varphi = \frac{1}{\lambda} \left( \eta - 2 \sum_{\alpha \in \Span\{ \gamma_i | i \in S^c\} \cap \Delta^+} \alpha \right)$.
\end{itemize}
\end{thm}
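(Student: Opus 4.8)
The plan is to prove both implications by passing to fundamental-weight coordinates for $\varphi$ and reading off the combinatorial data from Lemma \ref{lem::vassumoffunddomweights}, the reformulation \eqref{eq::rho=lambdaomegaintermsoftau}, and Corollary \ref{cor::epsilonalphaformula}. Throughout, the set $S$ will be nothing but the support of $\varphi$ with respect to the fundamental dominant weights.

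For the forward direction I would start from $\varphi \in C$ with compact isotropy and $\varphi' = \lambda\varphi$, $\lambda \neq 0$, and write $\varphi = \sum_{i=1}^\ell v^i\varphi_i$; Lemma \ref{lem::vassumoffunddomweights} gives $v^i \geq 0$, and I set $S = \{i : v^i > 0\}$, which is non-empty since $\varphi \neq 0$ (were $\varphi = 0$, then $v = 0$ and the stabilizer would be the non-compact group $G$). Using \eqref{eq::psiscalalpha} one has $(\varphi,\gamma_j) = \frac{1}{2}v^j|\gamma_j|^2$, so the simple roots orthogonal to $\varphi$ are exactly $\{\gamma_j : j \in S^c\}$, and Lemma \ref{lem::vassumoffunddomweights} identifies $\Delta^+ \cap \varphi^\perp$ with $\Span\{\gamma_j : j \in S^c\} \cap \Delta^+$. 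The first bullet then follows because compactness of the isotropy forces every root orthogonal to $\varphi$ to be compact (Proposition \ref{prop::decompvplusm}), so no non-compact simple root may lie in $S^c$. The third bullet is obtained by rewriting $\varphi' = \lambda\varphi$ in the form \eqref{eq::rho=lambdaomegaintermsoftau}, substituting the description of $\Delta^+\cap\varphi^\perp$, and dividing by $\lambda$; the second bullet is immediate since $\varphi = \sum_{i\in S}v^i\varphi_i$ with every $v^i > 0$.

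For the reverse direction I would take a non-empty $S$ satisfying the three conditions and \emph{define} $\varphi$ by the formula in the third bullet. The second bullet places $\varphi$ in the open cone on $\{\varphi_i : i \in S\}$, so $\varphi = \sum_{i \in S}v^i\varphi_i$ with all $v^i > 0$; hence $\varphi \in C$ and its support is precisely $S$, whence $\Delta^+\cap\varphi^\perp = \Span\{\gamma_j : j \in S^c\}\cap\Delta^+$ again by Lemma \ref{lem::vassumoffunddomweights}. Every root $\alpha$ in this set is a non-negative combination of the compact simple roots $\{\gamma_j : j \in S^c\}$ (compact by the first bullet), so Corollary \ref{cor::epsilonalphaformula} yields $\varepsilon_\alpha = -1$ for each such $\alpha$; since $B(u_\alpha,u_\alpha) = B(v_\alpha,v_\alpha) = \varepsilon_\alpha$ by Lemma \ref{lem::advonualphaandvalpha}, the Killing form is negative definite on $\mathfrak v = \mathfrak h_0 \oplus \sum_{\alpha \in \Delta^+\cap\varphi^\perp}\Span\{u_\alpha,v_\alpha\}$ (Proposition \ref{prop::decompvplusm}), and the isotropy of $v = ih_\varphi$ is compact. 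Finally, comparing the definitions of $\eta$ and $\varphi'$ and using $\varepsilon_\alpha = -1$ on $\Delta^+\cap\varphi^\perp$ gives $\varphi' = \eta - 2\sum_{\alpha\in\Delta^+\cap\varphi^\perp}\alpha$, which is $\lambda\varphi$ by the defining formula.

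The step I expect to require the most care is matching the two descriptions of the index set: one must ensure that the vector produced by the formula in the third bullet has support exactly $S$, which is precisely what the open-cone condition in the second bullet guarantees, since otherwise the identification $\Delta^+\cap\varphi^\perp = \Span\{\gamma_j : j\in S^c\}\cap\Delta^+$ --- on which both the compactness argument and the final identity $\varphi'=\lambda\varphi$ rest --- would break down. The compactness computation through Corollary \ref{cor::epsilonalphaformula} is the other load-bearing ingredient, as it is what turns the purely combinatorial first bullet into the geometric statement that the isotropy group is compact.
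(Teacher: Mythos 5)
Your proof is correct and follows essentially the same route as the paper: identify $S$ with the support of $\varphi$ in fundamental-weight coordinates, use Lemma \ref{lem::vassumoffunddomweights} to equate $\Delta^+\cap\varphi^\perp$ with $\Span\{\gamma_j\,|\,j\in S^c\}\cap\Delta^+$, and rewrite $\varphi'=\lambda\varphi$ via \eqref{eq::rho=lambdaomegaintermsoftau}. The only difference is that you spell out the converse (compactness of the isotropy via Corollary \ref{cor::epsilonalphaformula} and negative definiteness of $B$ on $\mathfrak v$), which the paper dismisses as ``easy to check''; your expansion is sound.
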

\begin{proof}
Assume that $\varphi$ belongs to the dominant Weyl chamber $C \subset \mathfrak h_{\mathbf R}^*$, the stabilizer of $v=ih_\varphi$ is compact, and $\varphi'= \lambda \varphi$ for some non-zero $\lambda$.
By the discussion after Lemma \ref{lem::vassumoffunddomweights}, $\varphi$ belongs to the convex cone generated by the fundamental dominant weights $\varphi_1, \dots, \varphi_\ell$.
In particular, one can write $\varphi = \sum_{i=1}^\ell v^i \varphi_i$ for suitable real coefficients $v^1, \dots, v^\ell \geq 0$.
Moreover, the coefficient $v^i$ must be non-zero, hence positive, for any non-compact simple root $\gamma_i$ (otherwise the stabilizer of $v$ would not be compact).
The upshot is that the set $S$, constituted by all indices $1 \leq i \leq \ell$ such that $v^i > 0$, is non-empty, it satisfies the first point of the statement and one has $\varphi = \sum_{i \in S} v^i \varphi_i$.
On the other hand, we have $\varphi' = \eta - 2 \sum_{\alpha \in \Delta^+ \cap \varphi^\perp} \alpha $.
Moreover, note that Lemma \ref{lem::vassumoffunddomweights} imply $\Delta^+ \cap \varphi^\perp = \Span\{\gamma_i | i \in S^c\} \cap \Delta^+$.
Therefore the equation $\varphi' = \lambda \varphi$ can be rewritten as in the third point of the statement, whence the second point follows readily.

Conversely, if $\varphi \in \mathfrak h_0^*$ and there is $S \subset \{1,\dots,\ell\}$ satisfying all three points of the statement for some real $\lambda \neq 0$, then it is easy to check that $(\varphi,\alpha) \geq 0$ for any positive root $\alpha$ (hence $\varphi \in C$), $v=ih_\varphi$ has compact isotropy and $\varphi'=\lambda \varphi$.
\end{proof}

Similarly, the set of all $\varphi$'s satisfying $\varphi'=0$ can be described in terms of certain subsets of $\{1,\dots,\ell\}$. Note that, in contrast with the cases when $\lambda \neq 0$, such a set is very often infinite even up to scaling (more precisely this happens whenever the set $S$ appearing in the statement has cardinality bigger than one).

\begin{thm}\label{thm::specialvCY}
An element $\varphi \in \mathfrak h_{\mathbf R}^*$ belongs to the dominant Weyl chamber $C$, the stabilizer of $v=ih_\varphi$ is compact, and one has $\varphi'=0$ if and only if there exists a non-empty subset $S \subset \{ 1,\dots,\ell \}$ such that 
\begin{itemize}
\item all non-compact simple roots of $\Sigma^+$ belong to $\{\gamma_i | i \in S\}$, 
\item $\varphi = \sum_{i \in S} v^i\varphi_i$ with $v^i >0$ for all $i \in S$, 
\item $\eta - 2 \sum_{\alpha \in \Span\{ \gamma_i | i \in S^c\} \cap \Delta^+} \alpha = 0$.
\end{itemize}
\end{thm}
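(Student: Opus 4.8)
The plan is to run the same argument as in Theorem \ref{thm::specialvFGT}, specializing to the degenerate value $\lambda=0$; the only structural change is that $\varphi$ now drops out of the right-hand side of the defining equation, which is exactly what accounts for the infinitude of solutions noted just before the statement.

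For the forward implication I would assume $\varphi \in C$, that $v=ih_\varphi$ has compact isotropy, and that $\varphi'=0$. By Lemma \ref{lem::vassumoffunddomweights} and the discussion following it, I can write $\varphi = \sum_{i=1}^\ell v^i \varphi_i$ with all $v^i \geq 0$. Since compact isotropy forces $v\neq 0$, hence $\varphi \neq 0$, the support $S=\{i : v^i>0\}$ is non-empty, which gives the second point directly. Compactness of the stabilizer forces $v^i>0$ for every non-compact simple root $\gamma_i$: otherwise $\mathfrak v$ would contain the span of $u_{\gamma_i},v_{\gamma_i}$ for a non-compact $\gamma_i$, contradicting the last assertion of Proposition \ref{prop::decompvplusm}. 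This yields the first point, namely $\Sigma^+_n \subset \{\gamma_i \mid i \in S\}$. I would then invoke the last assertion of Lemma \ref{lem::vassumoffunddomweights} to identify $\Delta^+ \cap \varphi^\perp = \Span\{\gamma_i \mid i \in S^c\} \cap \Delta^+$, and substitute this into the expression $\varphi' = \eta - 2\sum_{\alpha \in \Delta^+ \cap \varphi^\perp}\alpha$ coming from \eqref{eq::rho=lambdaomegaintermsoftau}. The hypothesis $\varphi'=0$ then becomes precisely the third point.

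For the converse I would start from a non-empty $S$ satisfying the three conditions and verify the three conclusions. That $\varphi \in C$ follows from $v^i \geq 0$ together with Lemma \ref{lem::vassumoffunddomweights}. That the isotropy of $v=ih_\varphi$ is compact follows because, by the second point, $\varphi^\perp \cap \Delta^+ = \Span\{\gamma_i \mid i \in S^c\}\cap\Delta^+$, and every such root is a combination of compact simple roots only (by the first point $\Sigma^+_n \subset \{\gamma_i \mid i \in S\}$), so $\sum_{\gamma \in \Sigma^+_n}n_\gamma=0$ in its expansion and Corollary \ref{cor::epsilonalphaformula} gives $\varepsilon_\alpha=-1$; hence by Proposition \ref{prop::decompvplusm} the algebra $\mathfrak v$ is a compact subalgebra. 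Finally, reversing the substitution above shows that the third point is exactly the statement $\varphi'=0$.

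I do not expect a genuine obstacle here, since the computation parallels Theorem \ref{thm::specialvFGT} step by step. The one point worth emphasizing, rather than a difficulty, is the \emph{decoupling} that occurs at $\lambda=0$: the third condition constrains only the combinatorial datum $S$ (through which roots are orthogonal to $\varphi$) and \emph{not} the actual coefficients $v^i$. Consequently, once a single admissible $S$ exists, the whole open cone $\{\sum_{i \in S}v^i\varphi_i \mid v^i>0\}$ solves $\varphi'=0$, which is precisely why this solution set is infinite even up to scaling whenever $\card S>1$, in sharp contrast with the $\lambda\neq 0$ case governed by Theorem \ref{thm::specialvFGT}.
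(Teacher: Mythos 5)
Your proposal is correct and follows essentially the same route as the paper: both directions rest on Lemma \ref{lem::vassumoffunddomweights}, the identification $\Delta^+ \cap \varphi^\perp = \Span\{\gamma_i \mid i \in S^c\} \cap \Delta^+$, and the rewriting \eqref{eq::rho=lambdaomegaintermsoftau} of $\varphi'=0$, exactly as in the paper's proof (which itself mirrors Theorem \ref{thm::specialvFGT}). The only difference is that you spell out the converse's compactness-of-isotropy check via Corollary \ref{cor::epsilonalphaformula}, where the paper merely says it is easy to verify; your closing remark on the decoupling of the third condition from the coefficients $v^i$ matches the paper's discussion preceding the statement.
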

\begin{proof}
Assume that $\varphi$ belongs to the dominant Weyl chamber $C \subset \mathfrak h_{\mathbf R}^*$, the stabilizer of $v=ih_\varphi$ is compact, and $\varphi'= 0$.
By the discussion after Lemma \ref{lem::vassumoffunddomweights}, $\varphi$ belongs to the convex cone generated by the fundamental dominant weights $\varphi_1, \dots, \varphi_\ell$.
Thereofore $\varphi = \sum_{i=1}^\ell v^i\varphi_i$ for suitable real coefficients $v^1, \dots, v^\ell \geq 0$.
Moreover, the coefficient $v^i$ must be non-zero, hence positive, for any non-compact simple root $\gamma_i$ (otherwise the stabilizer of $v$ would not be compact).
The upshot is that the set $S$ constituted by all index $1 \leq i \leq \ell$ such that $v^i \neq 0$ is non-empty and satisfies the first and second point of the statement.
Finally, note that by \eqref{eq::rho=lambdaomegaintermsoftau} the condition $\varphi'=0$ can be written as
\begin{equation*}
\eta - 2 \sum_{\alpha \in \Delta^+ \cap \Span\{\gamma_i | v^i=0\}} \alpha = 0,
\end{equation*}
and by definition of $S$ it follows that $v^i=0$ if and only if $i$ belongs to the complement of $S$.
Thus the third point of the statement is satisfied as well.

Conversely, if $\varphi \in \mathfrak h_0^*$ and there is $S \subset \{1,\dots,\ell\}$ satisfying all three points of the statement, then it is easy to check that $v=ih_\varphi$ has compact isotropy, $(\varphi,\alpha) \leq 0$ for any positive root $\alpha$ and $\varphi'=0$.
\end{proof}

\section{Hermitian scalar curvature}\label{sec::Hermitianscalarcurvature}

Recall that the Hermitian scalar curvature of an almost complex structure $J$ compatible with a symplectic form $\omega$ on a $n$-dimensional manifold $M$, is the smooth function $s$ defined by $s \omega^n = n \rho \wedge \omega^{n-1}$, being $\rho$ the Chern-Ricci form of $J$.
At a point $p \in M$ one can choose a symplectic basis of the form $e_1,Je_1,\dots,e_n,Je_n$ of $T_pM$ and calculate $s(p) = \sum_{i=1}^n \rho(e_i,Je_i)$.

In our situation, $M=G/V$ is an adjoint orbit and both $\omega$ and $J$ are homogeneous.
Therefore the Hermitian scalar curvature $s$ of $J$ is constant and it is enough to calculate it at the identity coset $e \in G/V$.
To this end, recall that by Corollary \ref{cor::symplecticorthonormalbasis} a symplectic basis of $\mathfrak m$ is given by $(1/\sqrt{\lambda_\alpha})u_\alpha$, $(\varepsilon_\alpha/\sqrt{\lambda_\alpha})v_\alpha$ as $\alpha$ runs in $\Delta^+ \setminus \varphi^\perp$.
On the other hand, we already observed in section \ref{sec::spacialitycondition} that the Chern-Ricci form of $J$ is induced by the two form on $\mathfrak m$ defined by $B(v',[x,y])$, where $v'= 2 \sum_{\alpha \in \Delta^+ \setminus \varphi^\perp} [u_\alpha,v_\alpha]$.
Therefore, by discussion above, the Hermitian scalar curvature of $J$ is given by $s = B(v',z)$, where $z \in \mathfrak g$ is defined by
\begin{equation}
z = \sum_{\alpha \in \Delta^+ \setminus \varphi^\perp}\frac {\varepsilon_\alpha}{\lambda_\alpha} [u_\alpha,v_\alpha].
\end{equation}
By \eqref{eq::uandvintermsofe}, one readily calculates $z = \sum_{\alpha \in \Delta^+ \setminus \varphi^\perp}\frac {-i}{\lambda_\alpha} h_\alpha$ whence one can write $z = ih_\zeta$ where $\zeta \in \mathfrak h_{\mathbf R}^*$ is defined by
\begin{equation}
\zeta = -\sum_{\alpha \in \Delta^+ \setminus \varphi^\perp}\frac {1}{\lambda_\alpha} \alpha.
\end{equation}
On the other hand, recall that by \eqref{eq::v'sumofroots} one has $v' = i h_{\varphi'}$ with $\varphi' = - 2 \sum_{\alpha \in \Delta^+ \setminus \varphi^\perp} \varepsilon_\alpha \alpha$.
Therefore $s = - (\varphi', \zeta)$ 
whence, recalling that $\lambda_\beta = (\varphi,\beta)$ for all $\beta \in \Delta^+$, one readily has the following
\begin{lem}\label{lem::Hermitianscalarcurvature}
The Hermitian scalar curvature of $J$ is given by
\begin{equation}
s = -2 \sum_{\alpha,\beta \in \Delta^+ \setminus \varphi^\perp} \varepsilon_\alpha \frac{(\alpha,\beta)}{(\varphi,\beta)}.
\end{equation}
\end{lem}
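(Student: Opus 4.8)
The plan is to exploit the homogeneity of both $\omega$ and $J$, which forces the Hermitian scalar curvature $s$ to be constant, so that it suffices to evaluate it at the identity coset as $s = \sum_i \rho(e_i,Je_i)$ for any symplectic basis $\{e_i,Je_i\}$ of $\mathfrak m$. First I would take the explicit symplectic basis furnished by Corollary \ref{cor::symplecticorthonormalbasis}, namely the vectors $(1/\sqrt{\lambda_\alpha})u_\alpha$ together with their $J$-images $(\varepsilon_\alpha/\sqrt{\lambda_\alpha})v_\alpha$ as $\alpha$ ranges over $\Delta^+\setminus\varphi^\perp$, where I use $Hu_\alpha=\varepsilon_\alpha v_\alpha$ from \eqref{eq::defH}. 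Since the Chern-Ricci form is induced by the bilinear form $(x,y)\mapsto B(v',[x,y])$ with $v'=2\sum_{\alpha\in\Delta^+\setminus\varphi^\perp}[u_\alpha,v_\alpha]$, evaluating $\rho$ on these basis pairs collapses the sum to $s=B(v',z)$ with $z=\sum_{\alpha\in\Delta^+\setminus\varphi^\perp}(\varepsilon_\alpha/\lambda_\alpha)[u_\alpha,v_\alpha]$.

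The one genuine computation is the evaluation of the brackets $[u_\alpha,v_\alpha]$. Using the definitions \eqref{eq::uandvintermsofe} together with $[e_\alpha,e_{-\alpha}]=h_\alpha$ from Theorem \ref{thm::basisgc}, a bilinear expansion gives $[e_\alpha+e_{-\alpha},e_\alpha-e_{-\alpha}]=-2h_\alpha$, and after collecting the scalar prefactor $i^{2-\varepsilon_\alpha}s_\alpha$ (with $s_\alpha=+1$ on $\Delta^+$) one obtains $[u_\alpha,v_\alpha]=-i\varepsilon_\alpha h_\alpha$. The key point is that the extra factor $\varepsilon_\alpha$ appearing in $z$ combines with this to produce $\varepsilon_\alpha^2=1$, so that $(\varepsilon_\alpha/\lambda_\alpha)[u_\alpha,v_\alpha]=(-i/\lambda_\alpha)h_\alpha$ \emph{uniformly} in the compactness sign. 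Summing over the index set yields $z=ih_\zeta$ with $\zeta=-\sum_{\alpha\in\Delta^+\setminus\varphi^\perp}(1/\lambda_\alpha)\alpha$.

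Finally I would combine $v'=ih_{\varphi'}$ from \eqref{eq::v'sumofroots} with $z=ih_\zeta$ and the definition of the induced pairing on $\mathfrak h_{\mathbf R}^*$ to get $s=B(ih_{\varphi'},ih_\zeta)=-(\varphi',\zeta)$. Substituting the explicit expressions $\varphi'=-2\sum_{\alpha}\varepsilon_\alpha\alpha$ and $\zeta=-\sum_{\beta}(1/\lambda_\beta)\beta$ (both sums over $\Delta^+\setminus\varphi^\perp$), expanding the pairing bilinearly, and replacing $\lambda_\beta$ by $(\varphi,\beta)$—legitimate since every $\beta$ in the index set is a positive root—produces exactly the stated double sum.

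Since essentially the whole reduction has already been carried out in the paragraph preceding the statement, I do not anticipate any serious obstacle. The only place demanding care is the bookkeeping of the powers of $i$ and of the compactness sign $\varepsilon_\alpha$ in the bracket computation of the second step, where one must verify that the $\varepsilon_\alpha$-dependence of $[u_\alpha,v_\alpha]$ is exactly cancelled by the $\varepsilon_\alpha$ already present in the definition of $z$; everything else is formal bilinear algebra in $\mathfrak h_{\mathbf R}^*$.
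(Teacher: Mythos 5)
Your proposal is correct and follows essentially the same route as the paper, which carries out exactly this reduction (evaluating $\rho$ on the symplectic basis of Corollary \ref{cor::symplecticorthonormalbasis}, computing $[u_\alpha,v_\alpha]=-i\varepsilon_\alpha h_\alpha$ so that $s=B(v',z)=-(\varphi',\zeta)$, and substituting $\lambda_\beta=(\varphi,\beta)$) in the paragraph preceding the statement of Lemma \ref{lem::Hermitianscalarcurvature}. The sign bookkeeping in your bracket computation, including the cancellation $\varepsilon_\alpha^2=1$ in $z$, matches the paper's \eqref{eq::v'sumofroots} and the displayed formula for $\zeta$.
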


Whenever a compatible almost complex structure on a symplectic manifold satisfies $\rho = \lambda \omega$, the Hermitian scalar curvature is given by $s=n\lambda$, where $2n$ is the dimension of the manifold.
Therefore, combining formula above together with Lemma \ref{lem::lambdaintermsofvarphiandtheta} yields the following dimension formula

\begin{prop}
Assume that $\varphi' = \lambda \varphi$.
If $\lambda \neq 0$ then
\begin{equation*}
\dim G/V = - \frac{4|\varphi|^2}{(\eta,\varphi)} \sum_{\alpha,\beta \in \Delta^+ \setminus \varphi^\perp} \varepsilon_\alpha \frac{(\alpha,\beta)}{(\varphi,\beta)}.
\end{equation*}
\end{prop}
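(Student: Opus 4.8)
The plan is to combine the Hermitian scalar curvature formula of Lemma \ref{lem::Hermitianscalarcurvature} with two elementary inputs already at hand: the universal identity $s = n\lambda$, valid for any compatible almost complex structure satisfying $\rho = \lambda\omega$ on a $2n$-dimensional symplectic manifold, and the expression $\lambda = (\eta,\varphi)/|\varphi|^2$ furnished by Lemma \ref{lem::lambdaintermsofvarphiandtheta}. The proposition is then a matter of substitution, with no real geometric content beyond what has already been established.

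First I would recall that, as shown in Section \ref{sec::spacialitycondition}, the hypothesis $\varphi' = \lambda\varphi$ is precisely the condition $\rho = \lambda\omega$ on the Chern-Ricci form. Writing $\dim G/V = 2n$, the relation $s = n\lambda$ gives $\dim G/V = 2s/\lambda$. This division is legitimate exactly because $\lambda \neq 0$ by assumption; note also that $\lambda \neq 0$ forces $(\eta,\varphi) \neq 0$ through Lemma \ref{lem::lambdaintermsofvarphiandtheta}, so the denominator appearing in the final formula does not vanish either.

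It then remains to substitute the explicit values. Inserting the expression for $s$ from Lemma \ref{lem::Hermitianscalarcurvature} and for $\lambda$ from Lemma \ref{lem::lambdaintermsofvarphiandtheta} into $\dim G/V = 2s/\lambda$, and carrying the factor $|\varphi|^2/(\eta,\varphi)$ through while combining the leading $2$ with the $-2$ coming from $s$, produces the claimed identity directly.

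I do not expect any genuine obstacle: the statement is an immediate consequence of the two cited lemmas together with the standard relation between the Hermitian scalar curvature and the Einstein constant. The only point meriting (minimal) attention is the bookkeeping of the numerical constants and the verification, noted above, that $\lambda \neq 0$ both permits the division and guarantees $(\eta,\varphi) \neq 0$.
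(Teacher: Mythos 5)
Your proposal is correct and follows exactly the paper's own argument: the proposition is obtained by combining $s=n\lambda$ with Lemma \ref{lem::Hermitianscalarcurvature} and Lemma \ref{lem::lambdaintermsofvarphiandtheta}, writing $\dim G/V = 2s/\lambda$ and substituting. The extra observation that $\lambda\neq 0$ guarantees $(\eta,\varphi)\neq 0$ is a sensible (if implicit in the paper) sanity check.
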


\section{Nijenhuis tensor}\label{sec::Nijenhuis}

Recall that the Nijenhuis tensor $N$ of an almost complex structure $J$ on a manifold $M$ is a $TM$-valued two-form defined in terms of commutators of vector fields by the identity 
\begin{equation*}
4N(X,Y) = [JX,JY] - J[JX,Y] - J[X,JY] - [X,Y].
\end{equation*}
The celebrated Newlander-Nirenberg theorem states that $J$ is integrable (i.e. it comes from a complex structure on $M$) if and only if $N=0$ \cite{NewlanderNirenberg1953}.
Thus $N$ constitutes an obstruction to the integrability of $J$.
For future reference, note that $N$ satisfies the identity 
\begin{equation}\label{eq::identitiesN}
N(JX,Y) = -JN(X,Y).
\end{equation}
If $J$ is compatible with a symplectic form $\omega$ on $M$, then the pointwise squared norm $|N|^2$ with respect to the Riemannian metric induced by $J$ defines a smooth function on $M$.
Clearly one can conclude that $J$ is integrable if and only if $|N|^2=0$.
In order to calculate $|N|^2$ at a point $p \in M$ one can choose a symplectic basis of the form $e_1,Je_1,\dots,e_n,Je_n$ of $T_pM$, so that 
\begin{equation}\label{eq::|N|^2(p)}
|N|^2(p) = 2\sum_{i,j=1}^n |N(e_i,e_j)|^2
\end{equation}

In our situation, being $J$ an homogeneous almost complex structure on $G/V$, the Nijenhuis tensor $N$ is homogeneous as well. Therefore, it is completely determined by its value at the identity coset $e \in G/V$, where it can be described as a skew-symmetric bilinear map 
\begin{equation*}
N_H : \mathfrak m \times \mathfrak m \to \mathfrak m.
\end{equation*}
Moreover, since $\omega$ is homogeneous as well, the squared norm $|N|^2$ is a constant function on $G/V$.
In this section we will calculate $N_H$ and $|N|^2$ in terms of roots.

Recall that given $v \in \mathfrak g$ with compact isotropy $K \subset G$, we defined $\varphi \in \mathfrak h_{\mathbf R}^*$ such that $v = ih_\varphi$ and we have chosen positive roots $\Delta^+$ so that $\varphi$ belongs to the dominant Weyl chamber $C$.
Recall moreover that we defined a basis of $\mathfrak m$ (actually of $\mathfrak h_0^\perp$, but here we just need $\mathfrak m$) by letting
\begin{equation}\label{eq::uandvintermsofebis}
u_\alpha = \frac{i^{(1-\varepsilon_\alpha)/2}}{\sqrt 2} (e_\alpha + e_{-\alpha}), \qquad
v_\alpha = \frac{i^{(3-\varepsilon_\alpha)/2}}{\sqrt 2} s_\alpha (e_\alpha - e_{-\alpha})
\end{equation}
for all $\alpha \in \Delta^+ \setminus \varphi^\perp$.
Moreover we defined $H$ by letting $Hu_\alpha = \varepsilon_\alpha v_\alpha$, and $Hv_\alpha = - \varepsilon_\alpha u_\alpha$.
Therefore, $N_H$ is given by (see e.g. \cite[Lemma 12]{DellaVedova2017})
\begin{equation*}
4 N_H(u_\alpha,u_\beta)
= \varepsilon_\alpha \varepsilon_\beta [v_\alpha,v_\beta]_{\mathfrak m}
- \varepsilon_\alpha H [v_\alpha,u_\beta]_{\mathfrak m}
- \varepsilon_\beta H [u_\alpha,v_\beta]_{\mathfrak m}
- [u_\alpha,u_\beta]_{\mathfrak m},
\end{equation*}
where $[x,y]_{\mathfrak m}$ denotes the projection of the commutator $[x,y]$ to $\mathfrak m$.
Moreover, by \eqref{eq::identitiesN} the bilinear map $N_H$ also satisfies
\begin{align*}
N_H(u_\alpha,v_\beta) 
&= -\varepsilon_\beta HN_H(u_\alpha, u_\beta),
&N_H(v_\alpha,v_\beta) 
&= - \varepsilon_\alpha \varepsilon_\beta N_H(u_\alpha,u_\beta).
\end{align*}
As a consequence $N_H$ is entirely determined by $N_H(u_\alpha,u_\beta)$ as $\alpha,\beta \in \Delta^+ \setminus \varphi^\perp$.
After substituting \eqref{eq::uandvintermsofebis} into equation above, a lengthy and uninspired calculation involving Theorem \ref{thm::basisgc} yields
\begin{lem}\label{lem::NH}
For all $\alpha, \beta \in \Delta^+ \setminus \varphi^\perp$, one has
\begin{multline*}
N_H(u_\alpha,u_\beta)
= \frac{(\varepsilon_\alpha+1)(\varepsilon_\beta+1)}{4\sqrt{2}} N_{\alpha,\beta} v_{\alpha+\beta} \\
+ \frac{(\varepsilon_\alpha\varepsilon_\beta - 1)s_{\alpha-\beta}+\varepsilon_\alpha-\varepsilon_\beta}{4\sqrt{2}} N_{\alpha,-\beta} (v_{\alpha-\beta})_{\mathfrak m},
\end{multline*}
where $(v_{\alpha-\beta})_{\mathfrak m}$ denotes the component of $v_{\alpha-\beta}$ along $\mathfrak m$ according to the decomposition $\mathfrak g = \mathfrak v \oplus \mathfrak m$, and the constants $N_{\alpha,\beta}$, $N_{\alpha,-\beta}$ are as in Theorem \ref{thm::basisgc}.
\end{lem}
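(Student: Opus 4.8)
The plan is to start from the expression for $4N_H(u_\alpha,u_\beta)$ recalled just before the statement, which writes the Nijenhuis tensor as a fixed linear combination, with weights $\varepsilon_\alpha\varepsilon_\beta$, $-\varepsilon_\alpha$, $-\varepsilon_\beta$, $-1$, of the four projected brackets $[v_\alpha,v_\beta]_{\mathfrak m}$, $[v_\alpha,u_\beta]_{\mathfrak m}$, $[u_\alpha,v_\beta]_{\mathfrak m}$, $[u_\alpha,u_\beta]_{\mathfrak m}$, the middle two precomposed with $H$. Since $\alpha,\beta\in\Delta^+\setminus\varphi^\perp$ are positive one has $s_\alpha=s_\beta=1$, so substituting \eqref{eq::uandvintermsofebis} turns each of these brackets into a power of $i$ times the four-term expansion $\sum_{\pm}[e_{\pm\alpha},e_{\pm\beta}]$. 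First I would expand every $[e_{\pm\alpha},e_{\pm\beta}]$ by Theorem \ref{thm::basisgc}, separating the contributions in $\mathfrak g^{\pm(\alpha+\beta)}$ (the \emph{sum} part, governed by $N_{\alpha,\beta}$) from those in $\mathfrak g^{\pm(\alpha-\beta)}$ (the \emph{difference} part, governed by $N_{\alpha,-\beta}$), discarding the Cartan terms $h_\alpha$ which die under the projection to $\mathfrak m$.

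The key reduction is to express all structure constants through $N_{\alpha,\beta}$ and $N_{\alpha,-\beta}$ via the symmetries of Theorem \ref{thm::basisgc}, namely $N_{-\alpha,-\beta}=-N_{\alpha,\beta}$ and $N_{-\alpha,\beta}=-N_{\alpha,-\beta}$. With these the sum part of each bracket collapses onto a multiple of $e_{\alpha+\beta}+e_{-(\alpha+\beta)}$ or of $e_{\alpha+\beta}-e_{-(\alpha+\beta)}$, hence of $u_{\alpha+\beta}$ or $v_{\alpha+\beta}$ after inverting \eqref{eq::uandvintermsofebis}, and similarly for the difference part. The remaining subtlety is the accumulation of powers of $i$; these I would simplify using $\varepsilon_{\alpha+\beta}=-\varepsilon_\alpha\varepsilon_\beta$ (and the analogous identity for $\alpha-\beta$ when it is a root), which reduces every exponent modulo $4$ to an expression in $\varepsilon_\alpha,\varepsilon_\beta$ alone. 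Applying $H$ by \eqref{eq::defH} on the two terms that need it, with $Hu_\gamma=\varepsilon_\gamma v_\gamma$ and $Hv_\gamma=-\varepsilon_\gamma u_\gamma$, then yields four contributions to the sum part and four to the difference part.

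Finally I would assemble the four contributions with their weights. I expect the $u_{\alpha+\beta}$ and $u_{\alpha-\beta}$ components to cancel identically, leaving only $v_{\alpha+\beta}$ and $v_{\alpha-\beta}$; this cancellation is forced by $N_H(u_\alpha,u_\beta)$ being a \emph{real} vector of $\mathfrak m$, and the surviving coefficients collapse to $(\varepsilon_\alpha+1)(\varepsilon_\beta+1)/(4\sqrt2)$ and to $[(\varepsilon_\alpha\varepsilon_\beta-1)s_{\alpha-\beta}+\varepsilon_\alpha-\varepsilon_\beta]/(4\sqrt2)$ respectively. Two points need care. For the sum term one checks that $\alpha+\beta$, having positive pairing with $\varphi$, again lies in $\Delta^+\setminus\varphi^\perp$ whenever it is a root, so $v_{\alpha+\beta}\in\mathfrak m$ and no projection is lost, which is why the first summand carries no $(\cdot)_{\mathfrak m}$. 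For the difference term instead $\alpha-\beta$ may be a \emph{negative} root, which is the origin of the sign $s_{\alpha-\beta}$ in the coefficient, or may be orthogonal to $\varphi$, in which case $v_{\alpha-\beta}\in\mathfrak v$ and its $\mathfrak m$-component vanishes; this is precisely why the second summand is written with $(v_{\alpha-\beta})_{\mathfrak m}$.

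The main obstacle is exactly this bookkeeping: correctly tracking the half-integer powers of $i$ across all four brackets and the two applications of $H$, and organizing the case analysis for the difference contribution (positive versus negative $\alpha-\beta$, and $\alpha-\beta$ inside or outside $\varphi^\perp$) so that the various signs coalesce into the single closed form involving $s_{\alpha-\beta}$. The cancellation of the $u$-components, though conceptually clear from reality, must still be verified numerically in each parity class of $(\varepsilon_\alpha,\varepsilon_\beta)$.
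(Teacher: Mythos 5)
Your proposal is correct and is essentially the paper's own proof: the paper disposes of this lemma by substituting \eqref{eq::uandvintermsofebis} into the displayed formula for $4N_H(u_\alpha,u_\beta)$ and invoking ``a lengthy and uninspired calculation involving Theorem \ref{thm::basisgc}'', which is exactly the computation you organize (expansion of the four brackets, reduction of all structure constants to $N_{\alpha,\beta}$ and $N_{\alpha,-\beta}$ via their symmetries, phase bookkeeping through $\varepsilon_{\alpha\pm\beta}=-\varepsilon_\alpha\varepsilon_\beta$, and the case discussion producing $s_{\alpha-\beta}$ and $(v_{\alpha-\beta})_{\mathfrak m}$). One inessential misstatement in your third paragraph: once the symmetries $N_{-\alpha,-\beta}=-N_{\alpha,\beta}$ and $N_{-\alpha,\beta}=-N_{\alpha,-\beta}$ are applied, the brackets $[u_\alpha,u_\beta]$ and $[v_\alpha,v_\beta]$ collapse onto pure $(e_\gamma-e_{-\gamma})$-combinations while the mixed brackets collapse onto pure $(e_\gamma+e_{-\gamma})$-combinations, so after $H$ acts every term is already a multiple of $v_{\alpha+\beta}$ or $v_{\alpha-\beta}$ --- no $u_\gamma$-components ever appear, so there is nothing to cancel, and reality could not have forced such a cancellation anyway, since the $u_\gamma$ are themselves real vectors of $\mathfrak m$.
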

A few comments about the meaning of this formula are in order.
First of all, note that for all $\alpha,\beta \in \Delta^+ \setminus \varphi^\perp$ the sum $\alpha+\beta$ may not be a root. 
In this case one has $N_{\alpha,\beta}=0$ so that the first summand of r.h.s. vanishes and we do not need to make sense for $v_{\alpha+\beta}$.
On the other hand, whenever $\alpha+\beta$ is a root, it belongs certainly to $\Delta^+ \setminus \varphi^\perp$ for $\varphi$ belongs to the dominant Weyl chamber, and one has
\begin{equation}
(\varphi,\alpha+\beta) 
= (\varphi,\alpha) + (\varphi,\beta) > 0.
\end{equation}
Second, note that for all $\alpha,\beta \in \Delta^+ \setminus \varphi^\perp$ the difference $\alpha-\beta$ is not necessarily a positive root, and $\alpha-\beta$ may well be orthogonal to $\varphi$.
The former is not a big deal, since $N_{\alpha,-\beta}v_{\alpha-\beta}$ is invariant under switching $\alpha$ and $\beta$ and it vanishes whenever $\alpha-\beta$ is not a root.
On the other hand, similarly as above one has
\begin{equation}
(\varphi,\alpha-\beta) 
= (\varphi,\alpha) - (\varphi,\beta),
\end{equation}
whence it follows that one of the roots $\alpha-\beta$ and $\beta-\alpha$ belongs to $\Delta^+ \setminus \varphi^\perp$ precisely when $(\varphi,\alpha) \neq (\varphi,\beta)$, and in this case one has $s_{\alpha-\beta}(\varphi,\alpha-\beta) > 0$.
Therefore, either $(v_{\alpha-\beta})_\mathfrak m=0$ and $(\varphi,\alpha - \beta)=0$, or $(v_{\alpha-\beta})_\mathfrak m = v_{\alpha-\beta}$ and $(\varphi,\alpha - \beta) \neq 0$.

Third, note that $(\varepsilon_\alpha+1)(\varepsilon_\beta+1)=0$ whenever at least one root among $\alpha$ and $\beta$ is compact.
Similarly, $(\varepsilon_\alpha\varepsilon_\beta - 1)s_{\alpha-\beta}+\varepsilon_\alpha-\varepsilon_\beta=0$ precisely when $\alpha$ and $\beta$ are both compact or both non-compact.
Therefore $N_H(u_\alpha,u_\beta) = 0$ if both $\alpha$ and $\beta$ are compact, $N_H(u_\alpha,u_\beta) = \frac{1}{\sqrt{2}} N_{\alpha,\beta} v_{\alpha+\beta}$ if both $\alpha$ and $\beta$ are non-compact, and $N_H(u_\alpha,u_\beta) = \frac{\varepsilon_\alpha - s_{\alpha-\beta}}{2\sqrt{2}}N_{\alpha,-\beta} (v_{\alpha-\beta})_{\mathfrak m}$ otherwise.

At this point we can express the squared norm of $N$ in terms of root data.
To this end, recall that by Corollary \ref{cor::symplecticorthonormalbasis} one has  a symplectic orthonormal basis of $\mathfrak m$ constituted by $(1/\sqrt{\lambda_\alpha}) u_\alpha$, $(\varepsilon_\alpha/ \sqrt{\lambda_\alpha})v_\alpha$ with $\alpha \in \Delta^+ \setminus \varphi^\perp$.
Therefore, by Lemma \ref{lem::NH} and \eqref{eq::|N|^2(p)} one gets the following
\begin{thm}\label{thm::|N|^2}
The squared norm of the Nijenhuis tensor of $J$ is given by
\begin{multline*}
|N|^2 = \sum_{\alpha, \beta \in \Delta_n^+ \setminus \varphi^\perp} \frac{(\varphi,\alpha+\beta)}{(\varphi,\alpha)(\varphi,\beta)} N_{\alpha,\beta}^2 \\
+ \sum_{\alpha \in \Delta_c^+ \setminus \varphi^\perp}
\sum_{\beta \in \Delta_n^+ \setminus \varphi^\perp} (1+s_{\alpha-\beta}) \frac{(\varphi,\alpha-\beta)}{(\varphi,\alpha)(\varphi,\beta)} N_{\alpha,-\beta}^2,
\end{multline*}
where $\Delta_c^+$, $\Delta_n^+$ denote the sets of positive roots that are compact and non-compact respectively, and the constants $N_{\alpha,\beta}$, $N_{\alpha,-\beta}$ are as in Theorem \ref{thm::basisgc}.
\end{thm}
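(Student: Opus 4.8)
The plan is to substitute the symplectic orthonormal basis furnished by Corollary \ref{cor::symplecticorthonormalbasis} directly into the pointwise formula \eqref{eq::|N|^2(p)}. I would take $e_\alpha = \lambda_\alpha^{-1/2} u_\alpha$, so that its symplectic partner is $Je_\alpha = \varepsilon_\alpha \lambda_\alpha^{-1/2} v_\alpha$, as $\alpha$ ranges over $\Delta^+ \setminus \varphi^\perp$. Since \eqref{eq::|N|^2(p)} sums only over the vectors $e_\alpha$, and since $N_H$ is already entirely determined by the values $N_H(u_\alpha,u_\beta)$ (the two symmetry relations preceding Lemma \ref{lem::NH} having reduced everything to this case), bilinearity of $N_H$ gives immediately
\[
|N|^2 = 2 \sum_{\alpha,\beta \in \Delta^+ \setminus \varphi^\perp} \frac{1}{\lambda_\alpha \lambda_\beta} \, |N_H(u_\alpha,u_\beta)|^2 .
\]
From here Lemma \ref{lem::NH} supplies all the remaining input.

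Next I would insert the explicit value of $N_H(u_\alpha,u_\beta)$, organizing the sum according to the three cases already isolated in the discussion following Lemma \ref{lem::NH}: the term vanishes when $\alpha,\beta$ are both compact; it equals $\tfrac{1}{\sqrt 2} N_{\alpha,\beta} v_{\alpha+\beta}$ when both are non-compact; and it equals $\tfrac{\varepsilon_\alpha - s_{\alpha-\beta}}{2\sqrt 2} N_{\alpha,-\beta}(v_{\alpha-\beta})_{\mathfrak m}$ otherwise. The one further ingredient is the length of these target vectors with respect to the metric induced by $H$ and $\sigma$. From the computation of $\sigma(x,Hx)$ in the proof of Proposition \ref{Porp::compatibilityJ} (equivalently, from item \ref{item::Bualphaubeta} of Lemma \ref{lem::advonualphaandvalpha} together with the normalizations of Corollary \ref{cor::symplecticorthonormalbasis}) one reads off $|v_\gamma|^2 = \lambda_\gamma = (\varphi,\gamma)$ for every $\gamma \in \Delta^+ \setminus \varphi^\perp$. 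As recorded after Lemma \ref{lem::NH}, the vector $(v_{\alpha-\beta})_{\mathfrak m}$ is either $0$ (exactly when $(\varphi,\alpha-\beta)=0$) or equals $v_{\alpha-\beta}$, in which case its squared length is $\lambda_{\alpha-\beta} = s_{\alpha-\beta}(\varphi,\alpha-\beta)$.

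Feeding these lengths into the reduced sum, the non-compact/non-compact pairs produce at once the first sum in the statement, carrying the weight $(\varphi,\alpha+\beta)/\big((\varphi,\alpha)(\varphi,\beta)\big)$. The mixed pairs are where the genuine bookkeeping lies, and I expect this to be the main obstacle. Each mixed pair is counted twice in the symmetric double sum, once with the compact root in the first slot and once with the non-compact root there, and the coefficient $\varepsilon_\alpha - s_{\alpha-\beta}$ from Lemma \ref{lem::NH} behaves differently in the two orderings. The point to establish is that the two orderings collapse to one and the same expression. This rests on the elementary sign identities $(1\pm s)^2 s = \pm 2(1\pm s)$ valid for $s = \pm 1$ — which, combined with the sign of $(\varphi,\alpha-\beta)$ coming from $|(v_{\alpha-\beta})_{\mathfrak m}|^2 = s_{\alpha-\beta}(\varphi,\alpha-\beta)$, convert the squared coefficient into the clean selector $1+s_{\alpha-\beta}$ — together with the symmetries of the structure constants in Theorem \ref{thm::basisgc}, in particular $N_{\alpha,-\beta}^2 = N_{\beta,-\alpha}^2$, which guarantees that the two orderings carry equal structure-constant weight.

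Collecting the two equal mixed contributions and normalizing by the factor $2/(\lambda_\alpha\lambda_\beta)$ then yields precisely the second sum of the statement, taken over $\alpha \in \Delta_c^+ \setminus \varphi^\perp$ and $\beta \in \Delta_n^+ \setminus \varphi^\perp$, completing the identification. The only conceptual care needed throughout is to keep the $\mathfrak m$-projection honest: whenever $\alpha-\beta$ is orthogonal to $\varphi$ the corresponding term drops out, and this is exactly matched by the vanishing of the weight $(\varphi,\alpha-\beta)$, so no separate case for $\alpha-\beta \in \varphi^\perp$ survives in the final formula.
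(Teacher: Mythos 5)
Your proposal is correct and follows essentially the same route as the paper's own proof: starting from \eqref{eq::|N|^2(p)} with the basis of Corollary \ref{cor::symplecticorthonormalbasis}, inserting Lemma \ref{lem::NH}, and merging the two orderings of the mixed compact/non-compact pairs via $N_{\alpha,-\beta}=N_{\beta,-\alpha}$, $s_{\alpha-\beta}=-s_{\beta-\alpha}$ and the identity $(1+s)^2=2+2s$, together with $|v_\gamma|^2=(\varphi,\gamma)$ and $|(v_{\alpha-\beta})_{\mathfrak m}|^2=s_{\alpha-\beta}(\varphi,\alpha-\beta)$. The bookkeeping you flag as the main obstacle is exactly what the paper does, and your sign identities resolve it correctly.
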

\begin{proof}
By \eqref{eq::|N|^2(p)}, recalling that $\lambda_\alpha = (\varphi,\alpha)$ for all $\alpha \in \Delta^+$, one has
\begin{equation*}
|N|^2 = 2 \sum_{\alpha,\beta \in \Delta^+ \setminus \varphi^\perp} \frac{|N_H(u_\alpha,u_\beta)|^2}{(\varphi,\alpha)(\varphi,\beta)}.
\end{equation*}
By Lemma \ref{lem::NH} then one has
\begin{multline*}
|N|^2
= \sum_{\alpha, \beta \in \Delta_n^+ \setminus \varphi^\perp}  \frac{N_{\alpha,\beta}^2 |v_{\alpha+\beta}|^2}{(\varphi,\alpha)(\varphi,\beta)} \\
+ \sum_{\alpha \in \Delta_c^+ \setminus \varphi^\perp}
\sum_{\beta \in \Delta_n^+ \setminus \varphi^\perp} \frac{(1+s_{\alpha-\beta})^2}{4(\varphi,\alpha)(\varphi,\beta)} N_{\alpha,-\beta}^2 |(v_{\alpha-\beta})_{\mathfrak m}|^2 \\
+ \sum_{\alpha \in \Delta_n^+ \setminus \varphi^\perp}
\sum_{\beta \in \Delta_c^+ \setminus \varphi^\perp} \frac{(1-s_{\alpha-\beta})^2}{4(\varphi,\alpha)(\varphi,\beta)} N_{\alpha,-\beta}^2 |(v_{\alpha-\beta})_{\mathfrak m}|^2,
\end{multline*}
whence, by the identities $N_{\alpha,-\beta} = N_{\beta,-\alpha}$, $v_{\alpha-\beta} = v_{\beta-\alpha}$, $s_{a\alpha-\beta}=-s_{\beta-\alpha}$, and $(1+s_{\alpha-\beta})^2 = 2+2s_{\alpha-\beta}$, it follows
\begin{multline*}
|N|^2 = \sum_{\alpha, \beta \in \Delta_n^+ \setminus \varphi^\perp}  \frac{N_{\alpha,\beta}^2 |v_{\alpha+\beta}|^2}{(\varphi,\alpha)(\varphi,\beta)} \\
+ \sum_{\alpha \in \Delta_c^+ \setminus \varphi^\perp}
\sum_{\beta \in \Delta_n^+ \setminus \varphi^\perp} \frac{1+s_{\alpha-\beta}}{(\varphi,\alpha)(\varphi,\beta)} N_{\alpha,-\beta}^2 |(v_{\alpha-\beta})_{\mathfrak m}|^2 
\end{multline*}
Finally, note that $|v_{\alpha+\beta}|^2=(\varphi,\alpha+\beta)$ and $|(v_{\alpha-\beta})_{\mathfrak m}|^2=s_{\alpha-\beta}(\varphi,\alpha-\beta)$, whence the thesis follows.
\end{proof}

\begin{rem}\label{rem::crtinnonintegrability}
As we already observed, $s_{\alpha-\beta}(\varphi,\alpha-\beta) \geq 0$ for $\varphi$ belongs to the dominant Weyl chamber $C$.
Therefore, all summands of the formula for $|N|^2$ of Theorem \ref{thm::|N|^2} are positive.
In particular, $J$ turns out to be non-integrable as soon as there are two non-compact positive roots $\alpha$, $\beta$ such that $\alpha+\beta$ is a root.
\end{rem}

\section{Compact quotients}\label{sec::compactquotients}

As above, given an element $v \in \mathfrak g$ having compact stabilizer $V \subset G$, consider the adjoint orbit $G/V$ endowed with the Kirillov-Kostant-Souriau symplectic form $\omega$ and the canonical complex structure $J$.  

Our interest here is in a compact locally homogeneous manifold of the form 
\begin{equation*}
M=\Gamma \backslash G/V,
\end{equation*} where $\Gamma \subset G$ is any discrete uniform subgroup, whose existence is guaranteed by a theorem of Borel \cite{Borel1963}.
Since $\omega$ and  $J$ are $G$-invariant, they descend to a symplectic form and an almost complex structure on $M$, which we shall denote by $\omega_\Gamma$ and $J_\Gamma$ respectively.
Therefore $(M,\omega_\Gamma,J_\Gamma)$ is a compact almost K\"ahler manifold.
Dropping for a moment the almost complex structure $J_\Gamma$, we consider the following
\begin{que}\label{que::Kahlerianity}
Does the compact symplectic manifold $(M,\omega_\Gamma)$ admit a (non-necessarily locally homogeneous) compatible complex structure? 
\end{que}
In other words, we are asking if among all almost complex structures compatible with $\omega_\Gamma$ there is one, say $J'$, which is integrable, thus making $(M,\omega_\Gamma,J')$ a K\"ahler manifold.

In this section we aim to give some partial answers to this question.

First of all note that, due to their local nature, all geometric objects such as the Chern-Ricci form, the Hermitian scalar curvature, and the Nijenhuis tensor of $J$ on $(G/V,\omega)$ descend to the corresponding objects on $(M,\omega_\Gamma,J_\Gamma)$.
In particular, the answer to Question \ref{que::Kahlerianity} is positive whenever $J$ is integrable on $G/V$ for $(M,\omega_\Gamma,J_\Gamma)$ is K\"ahler in this case.

On the other hand, in certain circumstances involving just the group $G$, the answer to Question \ref{que::Kahlerianity} is negative.
In fact we have the following result which is due to Carlson and Toledo \cite[Theorem 8.2]{CarlsonToledo1989}.

\begin{thm}\label{thm: noHSnoKahler}
Let $K \subset G$ be a maximal compact subgroup.
If $G/K$ is not Hermitian symmetric, then $(M,\omega_\Gamma)$ is not of the homotopy type of a compact K\"ahler manifold.
\end{thm}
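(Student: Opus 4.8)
The plan is to reduce Question \ref{que::Kahlerianity} for this particular $M$ to the harmonic--map rigidity theorem of Carlson and Toledo, by realizing $M$ as the total space of a fibre bundle over a compact locally symmetric space and extracting a contradiction from high--degree cohomology. First I would record the smooth fibration
\[
K/V \longrightarrow M=\Gamma\backslash G/V \xrightarrow{\ \pi\ } N:=\Gamma\backslash G/K ,
\]
whose fibre $K/V$ is a compact connected homogeneous space and whose base $N$ is a closed oriented aspherical manifold of dimension $d:=\dim G/K$. Since $G/K$ is contractible, $N$ is a $K(\Gamma,1)$; and since $G$ is semisimple, non-compact, with $G/K$ not Hermitian symmetric, the non-Hermitian de Rham factor of $G/K$ already has dimension at least $3$, so $d\geq 3$. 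The key preliminary point is that $\pi$ is cohomologically essential in top degree: the $G$-invariant volume form of $G/K$ is harmonic on $N$ and hence nonzero in $H^{d}(N;\mathbf R)$, and its pullback stays nonzero in $H^{d}(M;\mathbf R)$. This follows from the injectivity of the maps from relative Lie algebra cohomology $H^*(\mathfrak g,V)$ and $H^*(\mathfrak g,K)$ into the de Rham cohomology of the compact quotients (Matsushima), together with the compatibility $\pi^*\circ j_K=j_V\circ r$, where the restriction $r\colon H^*(\mathfrak g,K)\to H^*(\mathfrak g,V)$ is injective because $\mathfrak v\subset\mathfrak k$. In particular $\pi^*$ is injective on the invariant cohomology of $N$, and so $\pi^*\colon H^{d}(N;\mathbf R)\to H^{d}(M;\mathbf R)$ is nonzero.

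Next I would argue by contradiction. Suppose $X$ is a compact K\"ahler manifold with a homotopy equivalence $h\colon M\to X$. Composing a homotopy inverse of $h$ with $\pi$ yields a map $f\colon X\to N$ with $f\circ h\simeq\pi$, whence $h^*\circ f^*=\pi^*$; as $h^*$ is an isomorphism, $f^*\colon H^*(N;\mathbf R)\to H^*(X;\mathbf R)$ is nonzero in degree $d$ by the previous step. Because $N$ is a compact quotient of the non-positively curved symmetric space $G/K$, the Eells--Sampson theorem lets me replace $f$ by a homotopic harmonic map, and by Sampson's theorem this harmonic map is pluriharmonic since $X$ is K\"ahler. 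Note that every object here depends only on the homotopy type of $M$, which is exactly what the statement concerns.

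The heart of the matter is the rank estimate of Carlson and Toledo: for a pluriharmonic map from a compact K\"ahler manifold into a compact locally symmetric space of non-compact type whose symmetric space carries no Hermitian factor, the real rank of the differential is at most $2$ at every point. Granting this, for any closed form $\alpha$ of degree $k\geq 3$ one has $f^*\alpha=0$ pointwise, since $f^*\alpha(w_1,\dots,w_k)=\alpha(df\,w_1,\dots,df\,w_k)$ vanishes as the vectors $df\,w_i$ span a space of dimension $\leq 2<k$. In particular $f^*$ kills $H^{d}(N;\mathbf R)$ for $d\geq 3$, contradicting the nonvanishing established above. This contradiction shows that $M$ is not of the homotopy type of a compact K\"ahler manifold.

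The main obstacle is precisely the Carlson--Toledo rank estimate: its proof rests on the Bochner--Siu--Sampson identity for pluriharmonic maps combined with a representation-theoretic computation of the curvature operator of $G/K$, identifying exactly when the differential can have rank greater than $2$ (namely when a Hermitian factor is tangent to the image). Rather than reprove it, I would invoke it as \cite[Theorem 8.2]{CarlsonToledo1989}; the contribution of the present argument is the fibration reduction and the verification that $\pi^*$ does not annihilate the volume class, which is what transports their rigidity statement from $\Gamma\backslash G/K$ to the orbit quotient $M=\Gamma\backslash G/V$.
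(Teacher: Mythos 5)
The paper does not actually prove Theorem \ref{thm: noHSnoKahler}: it quotes it verbatim from Carlson and Toledo \cite[Theorem 8.2]{CarlsonToledo1989}, adding only the remark that to match their setting one forgets $\omega_\Gamma$ and equips $G/V$ with the integrable homogeneous complex structure $\tilde J$ (immaterial here, since only the homotopy type enters). Your proposal is therefore an attempt to reconstruct the Carlson--Toledo proof, and its skeleton --- the bundle $K/V \to M \xrightarrow{\pi} N=\Gamma\backslash G/K$, harmonic and then pluriharmonic replacement of $f\colon X \to N$, a rank bound, and a contradiction with $\pi^*[\mathrm{vol}_N]\neq 0$ in degree $d=\dim N$ --- is indeed theirs. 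But as written it has a genuine gap: you invoke the rank bound for targets whose symmetric space ``carries no Hermitian factor'', whereas the hypothesis of the theorem is only that $G/K$ is \emph{not Hermitian symmetric}. Since $G$ is merely semisimple, $G/K$ may contain Hermitian de Rham factors alongside a non-Hermitian one, and in that case the bound $\mathrm{rk}\,df\le 2$ is simply false (a harmonic map can be holomorphic of large rank onto the Hermitian part), so the step ``$\mathrm{rk}\le 2 < d$ kills $H^d$'' fails. The repair is to apply the estimate factor-wise, using the local parallel splitting of $TN$ into the Hermitian part $D$ and the non-Hermitian part $Y$: the $Y$-component of a pluriharmonic map is pluriharmonic with rank at most $2$, hence $\mathrm{rk}\,df \le \dim D + 2 < \dim D + \dim Y = d$ because $\dim Y \ge 3$, and the pointwise vanishing of $f^*$ on $d$-forms survives. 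Relatedly, citing ``\cite[Theorem 8.2]{CarlsonToledo1989}'' for the rank estimate is circular: by the paper's own citation, Theorem 8.2 \emph{is} the statement you are proving; the rank/factorization result is a different theorem of that paper and must be referenced as such.

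A second, smaller gap is your justification that $r\colon H^*(\mathfrak g,K)\to H^*(\mathfrak g,V)$ is injective ``because $\mathfrak v\subset\mathfrak k$''. Containment alone gives nothing: for $G=SL(2,\mathbf R)$, $K=SO(2)$, $V=\{e\}$ one has $H^2(\mathfrak g,\mathfrak k)\cong H^2(S^2)\neq 0$ while $H^2(\mathfrak g)\cong H^2(SU(2))=0$, so restriction kills exactly the volume class you need. What saves the step in the present setting is the standing assumption that $v$ has compact stabilizer, which forces $\mathfrak h_0\subset\mathfrak v$ (Proposition \ref{prop::decompvplusm}); thus $V$ contains a maximal torus of $K$ and $\chi(K/V)\neq 0$. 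Given that, you can bypass Matsushima and relative Lie algebra cohomology altogether: the Becker--Gottlieb transfer $\tau$ of the bundle $\pi$ satisfies $\tau\circ\pi^* = \chi(K/V)\cdot\mathrm{id}$ on $H^*(N;\mathbf R)$, so $\pi^*$ is injective, which is all the nonvanishing you need. With these two repairs (and the corrected citation for the rank theorem), your argument becomes a correct rendering of the Carlson--Toledo proof that the paper chose to cite rather than reproduce.
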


Note that in order to fit with the statement of the mentioned Theorem of Carlson and Toledo one has to drop the symplectic form $\omega_\Gamma$ and to endow $M=G/V$ with a homogeneous complex structure, say $\tilde J$, which always exists due to our hypothesis on $G$ and $V$ \cite[Section 2]{GriffithsSchmid1969}.
The point here is that $\tilde J$ is integrable, but one can check that it is not compatible with $\omega_\Gamma$ (positiveness of the associated pseudo-Riemannian metric fails).

Finally, assume that $J$ satisfies $\rho = \lambda \omega$ on $G/V$.
As a consequence, the same equation is satisfied on $M$, so that one can conclude that the first Chern class $c_1$ of $(M,\omega_\Gamma)$, which can be represented by $\rho_\Gamma/4\pi$ \cite[Section 2]{DellaVedova2017}, satisfies 
\begin{equation*}
4 \pi c_1 = \lambda [\omega_\Gamma] \in H^2_{dR}(M).
\end{equation*}
Therefore, if an adjoint orbit $(G/V,\omega,J)$ satisfies $\rho=\lambda\omega$, then the compact symplectic manifold $(M,\omega_\Gamma)$ turns out to be
\begin{itemize}
\item symplectic general type if $\lambda<0$,
\item symplectic Calabi-Yau if $\lambda=0$,
\item symplectic Fano if $\lambda>0$.
\end{itemize}

In the symplectic general type case, it may be that $J$ itself is integrable.
Therefore $J_\Gamma$ is integrable as well, thus giving several occurrences when the answer to Question \ref{que::Kahlerianity} is positive.
In particular this happens when $G/V$ is Hermitian symmetric.
We refer to Section \ref{sec::Vogandiagrams} for examples when $G$ is simple.

On the other hand one has the following
\begin{lem}\label{lem: sympFanoNotKahler}
If $(M,\omega_\Gamma)$ is symplectic Fano, then it is not of K\"ahler type.
\end{lem}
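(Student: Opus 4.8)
The plan is to argue by contradiction, converting the symplectic Fano hypothesis into the statement that any compatible complex structure would exhibit $M$ as a genuine Kähler Fano manifold, and then to derive a contradiction from the fundamental group of $M$.

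First I would suppose that $(M,\omega_\Gamma)$ is of Kähler type, so that there is an integrable complex structure $J'$ compatible with $\omega_\Gamma$; then $[\omega_\Gamma]$ is a Kähler class for $J'$. Since $M$ is symplectic Fano we have $4\pi c_1 = \lambda[\omega_\Gamma]$ with $\lambda>0$, and because $c_1$ is independent of the chosen compatible (almost) complex structure (any two are homotopic), this is the same $c_1$ computed from $J'$. As the Kähler cone is stable under multiplication by positive reals, $c_1=\tfrac{\lambda}{4\pi}[\omega_\Gamma]$ is itself a Kähler class; being in addition integral, the Kodaira embedding theorem shows that $-K_M=c_1$ is ample, i.e. $(M,J')$ is a Fano manifold.

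The key input is then the topological rigidity of Fano manifolds: a smooth Fano manifold is rationally connected (Campana; Koll\'ar--Miyaoka--Mori), and a smooth projective rationally connected variety over $\mathbf C$ is simply connected (Campana; Koll\'ar). Hence $\pi_1(M)$ would be trivial. This I would contrast with the locally homogeneous description $M=\Gamma\backslash G/V$. Since $\Gamma$ is torsion-free and $V$ is compact, for every $g\in G$ the intersection $\Gamma\cap gVg^{-1}$ is a discrete subgroup of a compact group, hence finite, hence trivial; therefore $\Gamma$ acts freely and properly discontinuously on $G/V$, and $G/V\to M$ is a regular covering with deck group $\Gamma$. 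This yields an exact sequence
\begin{equation*}
1 \to \pi_1(G/V) \to \pi_1(M) \to \Gamma \to 1,
\end{equation*}
and since the uniform lattice $\Gamma$ is infinite, so is $\pi_1(M)$, contradicting simple connectivity. Therefore $(M,\omega_\Gamma)$ is not of Kähler type.

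The main obstacle is conceptual rather than computational: it is to recognize that positivity of $\lambda$ must be upgraded to genuine positivity (ampleness) of the anticanonical class and then fed into the deep simple-connectivity theorem for Fano manifolds, since the naive invariant $b_1(M)$ need not help (for $G$ of higher real rank, property (T) forces $b_1=0$, so a first Betti number argument is inconclusive). I would remark that for those $G$ whose symmetric space $G/K$ is not Hermitian symmetric the conclusion also follows directly from Theorem \ref{thm: noHSnoKahler}; the fundamental group argument is precisely what additionally covers the Hermitian symmetric cases, in which $G/V$ may be a nontrivial flag bundle over a bounded domain and still be symplectic Fano.
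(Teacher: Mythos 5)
Your proof is correct, and it shares its outer skeleton with the paper's: assume a compatible integrable $J'$ exists, convert the symplectic Fano hypothesis into a positivity statement for $(M,J')$, and contradict the fact that $\pi_1(M)$ is infinite because $M$ is covered by the non-compact homogeneous space $G/V$ with infinite deck group $\Gamma$. The middle step, however, goes through genuinely different machinery. The paper argues differential-geometrically: $(M,\omega_\Gamma,J')$ would be a compact K\"ahler manifold whose first Chern class is a positive multiple of the K\"ahler class, and Myers' theorem then yields finiteness of $\pi_1(M)$; strictly speaking this requires first producing a K\"ahler metric of positive Ricci curvature via Yau's solution of the Calabi conjecture, since the metric determined by $(\omega_\Gamma,J')$ itself is only known to have positive first Chern class --- a point the paper's one-line proof glosses over. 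You instead pass through algebraic geometry: Kodaira embedding upgrades the integral K\"ahler class $c_1$ to ampleness of $-K_M$, so $(M,J')$ is Fano; rational connectedness (Campana, Koll\'ar--Miyaoka--Mori) plus simple connectivity of rationally connected projective manifolds then give $\pi_1(M)=1$. What each approach buys: the paper's route needs only finiteness of $\pi_1$ and comparatively lighter (though still deep) analytic input; yours invokes heavier Mori-theoretic results but makes every input explicit and delivers the stronger conclusion of simple connectivity. Your auxiliary points --- that $\Gamma$ acts freely because $\Gamma\cap gVg^{-1}$ is a finite subgroup of a torsion-free group, that $\Gamma$ is infinite by cocompactness in the non-compact $G$, and that Theorem \ref{thm: noHSnoKahler} already settles the cases where $G/K$ is not Hermitian symmetric --- are all correct.
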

\begin{proof}
Also in this case the obstruction has topological nature.
Indeed, if $J'$ were an integrable complex structure compatible with $\omega_\Gamma$, then $(M,\omega_\Gamma,J')$ would be a compact K\"ahler manifold with positive Ricci-curvature. As such, by Myers theorem it would have finite fundamental group, contradicting that $M$ is covered by $G/V$, which is non-compact.
\end{proof}

After looking at several examples of adjoint orbits $(G/V,\omega,J)$ with simple group $G$ that can be produced by techniques discussed in Section \ref{sec::Vogandiagrams}, we are pushed to suspect that the answer to Question \ref{que::Kahlerianity} is that a compact quotient $(M,\omega_\Gamma)$ is of K\"ahler type if and only if $J_\Gamma$ is integrable.
This should be compared with a result of Carlson and Toledo \cite[Theorem 0.1]{CarlsonToledo2014}, where they consider a quotient $M=\Gamma \backslash G/V$ as above, take a homogeneous complex structure $\tilde J$ on $G/V$ (which always exists but it is rarely compatible with $\omega$) and establish when the complex manifold $(M,\tilde J_\Gamma)$ admits a compatible K\"ahler metric. 
Unfortunately their approach seems to be hardly adaptable to our situation.
We plan to came back to this question in the future.

\section{Vogan diagrams}\label{sec::Vogandiagrams}

Vogan diagrams are combinatorial devices through which one can classify real semisimple Lie algebras \cite[Chapter VI]{Knapp1996}.
As we will see, they turn out to be a very convenient tool for studying (and hopefully classify) adjoint orbits $(G/V,\omega,J)$ of simple Lie groups satisfying $\rho = \lambda \omega$.

Recall that a Vogan diagram is a Dynkin diagram with some (including no one or all) painted vertices and some pairs of unpainted vertices related by an automorphism of order two that intertwines the elements of the pair.
A vertex is painted whenever the corresponding simple root is non-compact.
On the other hand, paired unpainted vertices correspond to roots which are intertwined by the Cartan involution.
If there is at least one pair of automorphism-related vertices, the diagram is said to have non-trivial automorphism.

It is easy to check that elements of a real semisimple Lie algebra associated with a Vogan diagram having non-trivial automorphism have non-compact isotropy.
In view of our applications, we are thus reduced to consider diagrams with trivial automorphism. 
For this reason, from now on, by a Vogan diagram we simply mean a Dynkin diagram with some painted vertices.

\begin{lem}\label{lem::correspVogandiagramwithv}
Let $G$ be a real semisimple Lie group with Lie algebra $\mathfrak g$, and let $\ell$ be the rank of $\mathfrak g$.
To any $v \in \mathfrak g$ with compact stabilizer, one can associate a Vogan diagram
and a vector $(v^1,\dots,v^\ell) \in \mathbf R^\ell$ with $v^i \geq 0$.
Moreover $v^i>0$ if the $i$-th node of the Vogan diagram is painted.
\end{lem}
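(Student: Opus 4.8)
The plan is to assemble both the Vogan diagram and the coordinate vector directly out of the co-vector $\varphi$ already attached to $v$, and then to extract the positivity of the painted coordinates from the compactness of the stabilizer. First I would recall that, because $v$ has compact stabilizer, the abelian subalgebra $\mathfrak h_0 \subset \mathfrak k$ chosen to contain $v$ is a compact Cartan subalgebra, so its complexification satisfies $\mathfrak h_c \subset \mathfrak k_c$. Since $\mathfrak k_c$ is the $+1$ eigenspace of the involution $\theta = \tau \tau_0$, the map $\theta$ fixes $\mathfrak h_c$ pointwise, and a one-line computation (using $\theta[h,x]=[\theta h,\theta x]$) shows that $\theta$ then preserves every root space $\mathfrak g^\alpha$ and hence acts trivially on $\Delta$. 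Consequently the diagram attached to $v$ has trivial automorphism, so it is a Vogan diagram in the sense fixed earlier (a Dynkin diagram with painted vertices), consistent with the remark that elements associated to diagrams with non-trivial automorphism have non-compact isotropy.

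With this in place I would \emph{define} the Vogan diagram associated to $v$ to be the Dynkin diagram of $(\mathfrak g_c,\Sigma^+)$ in which the $i$-th node is painted exactly when the simple root $\gamma_i$ is non-compact, that is $\gamma_i \in \Sigma^+_n$. For the coordinate vector I would take $\varphi \in \mathfrak h_{\mathbf R}^*$ determined by $h_\varphi = -iv$, with $\Delta^+$ chosen so that $\varphi \in C$, and write $\varphi = \sum_{i=1}^\ell v^i \varphi_i$ in the basis of fundamental dominant weights. By Lemma \ref{lem::vassumoffunddomweights} and the discussion following it, the containment $\varphi \in C$ is equivalent to all coefficients being non-negative, which yields $v^i \geq 0$ for every $i$.

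It then remains to verify the positivity of the painted coordinates. If the $i$-th node is painted, then $\gamma_i$ is non-compact, and by Proposition \ref{prop::decompvplusm} no non-compact root can lie in $\Delta^+ \cap \varphi^\perp$; hence $(\varphi,\gamma_i) \neq 0$, and since $\varphi \in C$ this forces $(\varphi,\gamma_i) > 0$. Specializing the identity \eqref{eq::psiscalalpha} to $\alpha = \gamma_i$ gives $(\varphi,\gamma_i) = \frac{1}{2} v^i |\gamma_i|^2$, so $v^i > 0$, as required.

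This argument is essentially a bookkeeping assembly of facts already established, so there is no genuine analytic obstacle. The only point deserving care is the justification that the associated diagram has trivial automorphism: this is precisely where the hypothesis that $v$ has compact stabilizer enters, through the existence of the compact Cartan subalgebra $\mathfrak h_0 \subset \mathfrak k$. I would therefore foreground that step and treat the rest as routine.
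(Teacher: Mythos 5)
Your proposal is correct and follows essentially the same route as the paper: choose $\mathfrak h_0 \ni v$, pass to $\varphi$ with $h_\varphi=-iv$, pick $\Delta^+$ so that $\varphi \in C$, paint the nodes of non-compact simple roots, and read off $v^i \geq 0$ from Lemma \ref{lem::vassumoffunddomweights} with strict positivity at painted nodes forced by the compactness of the stabilizer (via Proposition \ref{prop::decompvplusm}). The only difference is that you spell out explicitly the trivial-automorphism point and the positivity computation, both of which the paper disposes of in the discussion surrounding the lemma; this is harmless extra detail, not a different argument.
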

\begin{proof}
As we recalled in Section \ref{sec::structureofg}, given $v$ as in the statement, one can always find a Cartan subalgebra $\mathfrak h_0 \subset \mathfrak g$ which contains $v$. 
Therefore, since the Killing-Cartan form $B$ restricts to a positive definite scalar product on $\mathfrak h_{\mathbf R} = i \mathfrak h_0 \subset \mathfrak g_c$, one defines $\varphi \in \mathfrak h_{\mathbf R}^*$ by letting $\varphi(h) = -B(iv,h)$ for all $h \in \mathfrak h_{\mathbf R}$.
Now one can choose a positive root system $\Delta^+ \subset \mathfrak h_{\mathbf R}^*$ so that $\varphi$ belongs to the associated dominant Weyl chamber $C$ or, in other words, $(\varphi,\alpha) \geq 0$ for all $\alpha \in \Delta^+$.
The Weyl chamber also determines the set of positive simple roots $\Sigma^+ \subset \Delta^+$, and the latter splits as $\Sigma^+ = \Sigma^+_c \cup \Sigma^+_n$ where $\Sigma^+_c$ is the set of compact simple roots, and $\Sigma^+_n$ is the set of non-compact ones.

With all this at hand, one can produce a Vogan diagram by taking the Dynkin diagran associated to $\Sigma^+$ and painting all nodes which correspond to elements of $\Sigma^+_n$.

By the assumption that $\rk \mathfrak g = \ell$, one can label simple roots and write $\Sigma^+ = \{\gamma_1,\dots,\gamma_\ell\}$. 
If $A = (A_{ij})$ denotes the associated Cartan matrix, the fundamental dominant weights $\varphi_1,\dots, \varphi_\ell$ are given by 
\begin{equation*}
\varphi_j = \sum_{i=1}^\ell A^{ij}\gamma_i,
\end{equation*}
where $(A^{ij}) = A^{-1}$.
Since $\varphi$ belongs to the dominant Weyl chamber $C$, by Lemma \ref{lem::vassumoffunddomweights}
it follows that 
\begin{equation*}
\varphi = \sum_{i=1}^\ell v^i \varphi_i
\end{equation*}
with $v^i \geq 0$ for all $i = 1,\dots,\ell$, and $v^i>0$ whenever $\gamma_i$ is non-compact by compactness assumption of the stabilizer of $v$.
Thus, considering the vector $(v^i,\dots,v^\ell) \in \mathbf R^\ell$, the thesis follows by the rule for painting the nodes of the diagram.
\end{proof}

Note that one can associate different Vogan diagrams to the same element $v$.
This is clear by the proof of lemma above, where both a Cartan subalgebra $\mathfrak h_0$ containing $v$ and a Weyl chamber $C$ containing $\varphi$ cannot be chosen in a canonical way.
More specifically, one has non-uniqueness of the associated Vogan diagram whenever $\varphi$ belongs to some wall of the Weyl chamber $C$, so that there exists a different Weyl chamber $C'$ which contains $\varphi$.
On the other hand, the vector $(v^1,\dots,v^\ell)$ is uniquely determined once the Vogan diagram is chosen and the simple roots are labelled.

The correspondence established by Lemma \ref{lem::correspVogandiagramwithv} can be reversed. 
As is well known, starting with a connected Dynkin diagram, one can algorithmically find a positive root system $\Delta^+$ for a complex simple Lie algebra.
Similarly, thanks to Corollary \ref{cor::epsilonalphaformula}, starting with a connected Vogan diagram one can find a positive root system $\Delta^+$, and algorithmically determine for each root in $\Delta^+$ whether it is compact or not, for a real simple Lie algebra $\mathfrak g$.
Denote by $C$ the dominant Weyl chamber associated to $\Delta^+$, and choose $\varphi \in C$ of the form $\varphi = \sum_{i=1}^\ell v^i \varphi_i$, where the $\varphi_i$'s are the fundamental dominant weights and $v^i \geq 0$.
Since we agreed to consider only Vogan diagrams with trivial automorphism, the element $v=ih_\varphi \in \mathfrak g$ has compact stabilizer $V$ for the action of any simple Lie group $G$ having Lie algebra $\mathfrak g$. 

\begin{prop}\label{prop::fromvogantoalgebrastabilizer}
Up to isomorphism, the Lie algebra of the stabilizer of $v$ is given by
\begin{equation*}
\mathfrak v = \mathfrak v_1 \oplus \dots \oplus \mathfrak v_r \oplus \mathbf R^m,
\end{equation*}
where $\mathfrak v_1 \oplus \dots \oplus \mathfrak v_r$ is the compact Lie algebra associated to the Dynkin diagram obtained by removing all vertices corresponding to $v^i$'s that are non-zero from the Vogan diagram, and $m$ is the number of removed vertices.
Moreover, $r$ is the number of connected components of the Dynkin diagram and $\mathfrak v_1, \dots, \mathfrak v_r$ are simple.
\end{prop}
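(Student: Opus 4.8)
The plan is to combine the structural description of $\mathfrak v$ from Proposition \ref{prop::decompvplusm} with the combinatorial identification of orthogonal roots from Lemma \ref{lem::vassumoffunddomweights}, and then read off the isomorphism type by passing to the complexification and back.

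First I would set $S = \{i \mid v^i > 0\}$, so that the removed vertices are exactly those indexed by $S$ and $m = \card S$. By Proposition \ref{prop::decompvplusm} one has $\mathfrak v = \mathfrak h_0 \oplus \sum_{\alpha \in \Delta^+ \cap \varphi^\perp} \Span\{u_\alpha, v_\alpha\}$, where all the roots appearing are compact (this is the assertion of that proposition, itself a consequence of compactness of $V$). By the last statement of Lemma \ref{lem::vassumoffunddomweights}, the set $\Delta^+ \cap \varphi^\perp$ equals the set of positive roots lying in $\Span\{\gamma_j \mid j \in S^c\}$. Writing $\Delta_S := \Delta \cap \Span\{\gamma_j \mid j \in S^c\}$ for the corresponding full root subsystem, I would invoke the standard fact that $\Delta_S$ is itself a root system admitting $\{\gamma_j \mid j \in S^c\}$ as a base, whose Dynkin diagram is precisely the subdiagram obtained by deleting the vertices indexed by $S$.

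Next I would pass to the complexification. Since $u_\alpha$ and $v_\alpha$ together span $\mathfrak g^\alpha \oplus \mathfrak g^{-\alpha}$ over $\mathbf C$, the complexified stabilizer is $\mathfrak v_c = \mathfrak h_c \oplus \sum_{\alpha \in \Delta_S} \mathfrak g^\alpha$. This is a reductive subalgebra of Levi type: its derived subalgebra contains all $\mathfrak g^\alpha$ with $\alpha \in \Delta_S$ together with the coroots $h_\alpha = [e_\alpha,e_{-\alpha}]$, hence it is the complex semisimple Lie algebra with Cartan subalgebra $\Span\{h_\alpha \mid \alpha \in \Delta_S\}$ and root system $\Delta_S$; it splits into $r$ simple factors according to the $r$ connected components of the deleted subdiagram. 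The center of $\mathfrak v_c$ is the $B$-orthogonal complement inside $\mathfrak h_c$ of $\Span\{h_\alpha \mid \alpha \in \Delta_S\}$, which has dimension $\card S^c = \ell - m$, so the center has complex dimension $m$.

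Finally I would descend to the real form. Because $V$ is compact, $B$ is negative definite on $\mathfrak v$, so $\mathfrak v$ is a compact Lie algebra and therefore reductive with $\mathfrak v = \mathfrak z(\mathfrak v) \oplus [\mathfrak v,\mathfrak v]$. The central part sits inside $\mathfrak h_0$: an element $h \in \mathfrak h_0$ is central exactly when $\gamma_j(ih)=0$ for all $j \in S^c$, which cuts out an $m$-dimensional subspace, so $\mathfrak z(\mathfrak v) \cong \mathbf R^m$. The derived algebra $[\mathfrak v,\mathfrak v]$ is a compact semisimple real Lie algebra whose complexification carries root system $\Delta_S$, hence it is the compact real form of the complex semisimple algebra of the deleted subdiagram, with simple factors $\mathfrak v_1, \dots, \mathfrak v_r$ matching the $r$ connected components. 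The main point to handle with care is the bookkeeping of the Cartan subalgebra: verifying that exactly $\ell - m$ dimensions of $\mathfrak h_0$ get absorbed into the semisimple part (paired with the coroots $h_{\gamma_j}$, $j \in S^c$) while the remaining $m$ dimensions form the center, and confirming via Proposition \ref{prop::decompvplusm} that no root orthogonal to $\varphi$ is non-compact, so that the whole of $[\mathfrak v,\mathfrak v]$ is genuinely compact semisimple.
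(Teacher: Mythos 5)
Your proposal is correct and follows essentially the same route as the paper: both start from the decomposition of $\mathfrak v$ in Proposition \ref{prop::decompvplusm}, identify $\Delta^+ \cap \varphi^\perp$ with $\Span\{\gamma_j \mid v^j = 0\} \cap \Delta^+$ via Lemma \ref{lem::vassumoffunddomweights}, read off the simple factors from the connected components of the deleted subdiagram, and compute the center as the $m$-dimensional subspace of $\mathfrak h_0$ orthogonal to those roots. The only cosmetic difference is that you pass through the complexification and Levi-subalgebra structure theory, whereas the paper stays at the real level and derives the same orthogonality condition for the center directly from the brackets $[z,u_\alpha]=(\psi,\alpha)v_\alpha$.
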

\begin{proof}
By  Proposition \ref{prop::decompvplusm} we know that the Lie algebra of the stabilizer of $v$ is given by
\begin{equation*}
\mathfrak v = \mathfrak h_0 \oplus \sum_{\alpha \in \Delta^+ \cap \varphi^\perp} \Span\{u_\alpha, v_\alpha\}.
\end{equation*}
The splitting $\Delta^+ \cap \varphi^\perp = \Delta^+_1 \cup \dots \cup \Delta^+_r$ as union of irreducible positive root systems, induces a decomposition of $\mathfrak v$ as in the statement, where each $\mathfrak v_k$ is the real simple Lie algebra corresponding to $\Delta^+_k$, and $\mathbf R^m$ is the center of $\mathfrak v$.
Note that each $\mathfrak v_k$ is compact for no non-compact root is orthogonal to $\varphi$.

What is left to prove is that the dimension $m$ of the center of $\mathfrak v$ is the same as the number of $v^i$'s that are non-zero.
To this end, note that the center of $\mathfrak v$ is a subalgebra of $\mathfrak h_0$.
Thus, as a vector space we can identify it with a subspace of $\mathfrak h_{\mathbf R}^*$ by mapping each element $z$ of the center to the co-vector $\psi$ defined by $z = ih_\psi$.
Now, by definition \ref{eq::uandvintermsofe} of $u_\alpha$, $v_\alpha$ and item \ref {item::Bhalphah} of Theorem \ref{thm::basisgc} one readily finds
\begin{equation*}
[z,u_\alpha] = (\psi,\alpha) v_\alpha, \qquad [z,v_\alpha] = - (\psi,\alpha) u_\alpha.
\end{equation*}
Since both have to be zero whenever $z$ belongs to the center of $\mathfrak v$, the latter turns out to have the same dimension as the orthogonal complement of $\Delta^+ \cap \varphi^\perp$.
Since $\varphi$ is expressed as a sum of fundamental dominant weights $\varphi_j$'s, which satisfy $(\gamma_i,\varphi_j)=\delta_{ij}$, one can write $\Delta^+ \cap \varphi^\perp = \Span\{\gamma_i \in \Sigma^+ \,|\, v^i=0\} \cap \Delta^+$.
Therefore the orthogonal complement of the set $\Delta^+ \cap \varphi^\perp$ is given by $\Span\{\varphi_j \,|\, v^j \neq 0\}$, whence the thesis follows.
\end{proof}

At this point, we can consider the orbit $G/V$ and equip it with the Kirillov-Kostant-Souriau symplectic form $\omega$ and the canonical compatible almost complex structure $J$.
As discussed in section \ref{sec::spacialitycondition}, deciding whether $J$ satsfies $\rho = \lambda \omega$ or not is reduced to a combinatorial problem on $\varphi$ (cfr. Theorems \ref{thm::specialvFGT} and \ref{thm::specialvCY}) that can be treated at the Lie algebra level.
In order to simplify the statements of results below we give the following
\begin{defn}
We say that $\varphi$ is \emph{$\lambda$-special} (or just \emph{special}) if the orbit $(G/V,\omega,J)$ of $v=ih_\varphi \in \mathfrak g$ satisfies $\rho=\lambda\omega$.
\end{defn}

The upshot is that for any non-compact real simple Lie group $G$, all adjoint orbits $G/V$ endowed with the Kirillov-Kostant-Souriau form $\omega$ and the canonical compatible almost complex structure $J$ which satisfy $\rho=\lambda\omega$ for some constant $\lambda$ can be algorithmically listed (up to isomorphism and scaling).
Indeed, by discussion above this is equivalent to list (up to scaling) all special $\varphi$'s for all possible connected Vogan diagrams.

As one may expect, the number of such $\varphi$'s grows quite fast with the rank $\ell$ of the diagram (i.e. the number of nodes, which corresponds to the rank of the Lie algebra that the Vogan diagram represents).
Nevertheless it is possible to implement the listing algorithm on a computer and get a complete list up to quite large rank.
We did that up to rank $\ell=9$ (thus including all exceptional cases) on a standard PC with computing time of order of $10^3$ seconds for each higher rank algebra.
For the sake of brevity, we list in Appendix \ref{sec::appendix} all special $\varphi$'s for all connected Vogan diagrams up to rank $\ell=4$ together with all $E$-type ones.

Even though the general pattern is still unclear, leaving little hope for a complete classification, some general (non-)existence results can be red off directly from the Vogan diagram.

The first result in this direction says that, under some circumstances, fundamental dominant weights $\varphi_1,\dots,\varphi_\ell$ are special.
\begin{prop}\label{prop::singlenodespecial}
Given a Vogan diagram with a unique painted node, consider the associated set of simple roots $\Sigma^+=\{\gamma_1,\dots,\gamma_\ell\}$.
Assume that the unique painted node corresponds to the simple root $\gamma_p$.
Then $\varphi_p$ is special.
\end{prop}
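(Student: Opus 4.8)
The plan is to verify directly that $\varphi_p$ satisfies the hypotheses of Theorem~\ref{thm::specialvFGT} for an appropriately chosen subset $S$, with $\lambda$ to be determined. Since there is a unique painted node corresponding to $\gamma_p$, the set of non-compact simple roots is $\Sigma^+_n = \{\gamma_p\}$, and every other simple root is compact. The natural candidate is $S = \{p\}$, so that $S^c = \{1,\dots,\ell\} \setminus \{p\}$ and $\varphi = \varphi_p = \sum_{i \in S} v^i \varphi_i$ with the single coefficient $v^p > 0$. The first bullet of Theorem~\ref{thm::specialvFGT} is then immediate, since the only non-compact simple root $\gamma_p$ lies in $\{\gamma_i \mid i \in S\}$.

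\textbf{Key computation.}
The substantive step is to analyze the vector
\[
\xi = \eta - 2 \sum_{\alpha \in \Span\{\gamma_i \mid i \in S^c\} \cap \Delta^+} \alpha
\]
and show that it is a positive multiple of $\varphi_p$. First I would observe that $\Span\{\gamma_i \mid i \in S^c\} \cap \Delta^+$ consists precisely of the positive roots $\alpha = \sum_i n_i \gamma_i$ with $n_p = 0$; by Corollary~\ref{cor::epsilonalphaformula}, every such root has $\varepsilon_\alpha = (-1)^{1+\sum_{\gamma \in \Sigma^+_n} n_\gamma} = (-1)^{1+0} = -1$, i.e.\ all these roots are compact. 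The key claim is then that $\xi$ lies in the line spanned by $\varphi_p$, equivalently that $(\xi, \gamma_j) = 0$ for all $j \neq p$. To see this, recall $\eta = -2\sum_{\alpha \in \Delta^+} \varepsilon_\alpha \alpha$, so
\[
\xi = -2 \sum_{\alpha \in \Delta^+,\, n_p(\alpha) > 0} \varepsilon_\alpha \alpha
 \;-\; 2 \sum_{\alpha \in \Delta^+,\, n_p(\alpha) = 0}(\varepsilon_\alpha + 1)\alpha,
\]
and since every root with $n_p(\alpha)=0$ is compact ($\varepsilon_\alpha = -1$), the second sum vanishes entirely, leaving $\xi = -2\sum_{\alpha:\, n_p(\alpha)>0} \varepsilon_\alpha \alpha = \varphi' $ evaluated at $\varphi = \varphi_p$. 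Thus $\xi = \varphi_p'$, so one is reduced to showing $\varphi_p' = \lambda\varphi_p$, i.e.\ that $(\varphi_p', \gamma_j) = 0$ for $j \neq p$.

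\textbf{The main obstacle and its resolution.}
The crux is proving $(\varphi_p', \gamma_j) = 0$ for each $j \neq p$. The natural approach is to pair up roots: I would seek an involution on $\{\alpha \in \Delta^+ \mid n_p(\alpha) > 0\}$, such as $\alpha \mapsto s_{\gamma_j}(\alpha)$ (reflection in $\gamma_j$), under which $n_p$ is preserved and $\varepsilon_\alpha$ behaves controllably, so that the contributions to $(\varphi_p', \gamma_j)$ cancel in pairs. The delicate point is handling roots fixed by the reflection and confirming the sign $\varepsilon_\alpha$ is invariant under $\alpha \mapsto s_{\gamma_j}(\alpha)$; here Corollary~\ref{cor::epsilonalphaformula} is decisive, since reflecting in the \emph{compact} root $\gamma_j$ ($j \neq p$) changes the coefficient $n_j$ but leaves $n_p$, hence $\sum_{\gamma \in \Sigma^+_n} n_\gamma = n_p$, unchanged, so $\varepsilon_\alpha = \varepsilon_{s_{\gamma_j}(\alpha)}$. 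Because $(s_{\gamma_j}(\alpha) + \alpha, \gamma_j) = 0$ for the reflection, the paired terms cancel against each other in the pairing with $\gamma_j$, yielding $(\varphi_p', \gamma_j) = 0$. This forces $\varphi_p' = \lambda \varphi_p$ with $\lambda = (\eta, \varphi_p)/|\varphi_p|^2$ by Lemma~\ref{lem::lambdaintermsofvarphiandtheta}; the second and third bullets of Theorem~\ref{thm::specialvFGT} then follow automatically, and $\varphi_p$ is special. I expect the bookkeeping of the reflection argument, particularly verifying that the reflection really does preserve the set of positive roots with $n_p > 0$ (which holds precisely because $j \neq p$), to be the only genuinely technical part.
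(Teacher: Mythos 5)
Your proof is correct and is essentially the paper's own argument: the key step in both is that for $j\neq p$ the reflection in $\gamma_j$ permutes $\{\alpha\in\Delta^+ \mid n_p(\alpha)>0\}=\Delta^+\setminus\varphi_p^\perp$ and preserves $\varepsilon_\alpha$ (via Corollary \ref{cor::epsilonalphaformula}), so $\varphi_p'$ is fixed by all these reflections, hence orthogonal to every $\gamma_j$ with $j\neq p$, hence a multiple of $\varphi_p$. The only cosmetic difference is that you package the conclusion through Theorem \ref{thm::specialvFGT} (whose second bullet formally presupposes $\lambda\neq 0$), whereas the paper concludes directly from the equivalence between $\rho=\lambda\omega$ and $\varphi'=\lambda\varphi$, which also covers the case $\lambda=0$ without further comment.
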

\begin{proof}
By Lemma \ref{lem::vassumoffunddomweights} one readily gets that $\varphi_p$ belongs to the dominant Weyl chamber $C$ associated to $\Sigma^+$, and the set $\Delta^+ \setminus \varphi_p^\perp$ of all positive roots that are not orthogonal to $\varphi_p$ is constituted by all roots $\alpha = \sum_{k=1}^\ell n^k \gamma_k$ with $n^p>0$.

Note that any simple root $\gamma_i$ with $i\neq p$ is compact.
For any such a simple root, let $\sigma_i$ be the reflection with respect to the hyperplane orthogonal to $\gamma_i$. For any $\psi \in \mathfrak h_{\mathbf R}^*$ one has
\begin{equation*}
(\varphi_p,\sigma_i(\psi)) 
= (\varphi_p,\psi) - 2\frac{(\psi,\gamma_i)}{(\gamma_i,\gamma_i)} (\psi_p,\gamma_i)
= (\varphi_p,\psi) 
\end{equation*}
whence it follows that $\sigma_i(\alpha) \in \Delta^+\setminus\varphi_p^\perp$ for all $\alpha \in \Delta^+\setminus\varphi_p^\perp$.
As discussed in section \ref{sec::spacialitycondition}, the condition $\rho=\lambda\omega$ is equivalent to $\varphi'=\lambda\varphi$, where $\varphi' = - 2 \sum_{\alpha \in \Delta^+ \setminus \varphi^\perp} \varepsilon_\alpha \alpha$.
Note that for all $i\neq p$ one has $\sigma_i(\varphi')=\varphi'$.
Therefore $\varphi'$ is orthogonal to all compact $\gamma_i$, whence it follows that it must be a multiple of $\varphi$.
\end{proof}

At this point we are able to give the following
\begin{proof}[Proof of Theorem \ref{thm::mainintro}]
The thesis follows readily through some calculations from Proposition above, Proposition \ref{prop::fromvogantoalgebrastabilizer} and Lemma \ref{lem::lambdaintermsofvarphiandtheta}. 
\end{proof}

On the other hand, the next Lemma shows that if the canonical almost K\"ahler structure of an adjoint orbit satisfies $\rho=\lambda\omega$, then the sign of $\lambda$ is determined a priori by the Vogan diagram.
Moreover, it gives an effective criterion for deciding whether a given Vogan diagram admits no special $\varphi$.

\begin{lem}\label{lem::naturefromVogandiagram}
Given a Vogan diagram, consider the associated simple roots set $\Sigma^+= \{\gamma_1,\dots,\gamma_\ell\}$, and let $\varphi$ be a $\lambda$-special element of the dominant Weyl chamber.
If $\gamma_i \in \Sigma^+$ is a non-compact simple root, then the scalar product $(\eta,\varphi_i)$ has the same sign as $\lambda$.
\end{lem}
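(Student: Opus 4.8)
The plan is to test the special equation $\varphi'=\lambda\varphi$ against the fundamental dominant weight $\varphi_i$ and read off the sign directly. First I would invoke the reformulation \eqref{eq::rho=lambdaomegaintermsoftau}, which rewrites the hypothesis that $\varphi$ is $\lambda$-special in the form
\[
\eta - 2\sum_{\alpha\in\Delta^+\cap\varphi^\perp}\alpha = \lambda\varphi .
\]
Pairing both sides with $\varphi_i$ yields $(\eta,\varphi_i) - 2\sum_{\alpha\in\Delta^+\cap\varphi^\perp}(\alpha,\varphi_i) = \lambda(\varphi,\varphi_i)$, so the whole statement reduces to controlling the two terms flanking the identity.

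The key step is that the sum over $\Delta^+\cap\varphi^\perp$ drops out. Writing $\varphi=\sum_j v^j\varphi_j$, the hypothesis that $\gamma_i$ is non-compact forces $v^i>0$ (a painted node has positive coefficient, cf. Lemma \ref{lem::correspVogandiagramwithv}; otherwise the stabilizer of $v=ih_\varphi$ would fail to be compact). By Lemma \ref{lem::vassumoffunddomweights} one has $\Delta^+\cap\varphi^\perp=\Span\{\gamma_j\mid v^j=0\}\cap\Delta^+$, so every $\alpha\in\Delta^+\cap\varphi^\perp$ is a non-negative integer combination $\alpha=\sum_k n^k\gamma_k$ whose $\gamma_i$-coefficient $n^i$ vanishes, precisely because $v^i\neq 0$. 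Formula \eqref{eq::psiscalalpha} then gives $(\varphi_i,\alpha)=\tfrac12 n^i|\gamma_i|^2=0$, so the entire sum disappears and we are left with the clean identity $(\eta,\varphi_i)=\lambda(\varphi,\varphi_i)$.

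It then remains to check $(\varphi,\varphi_i)>0$, after which $(\eta,\varphi_i)$ carries exactly the sign of $\lambda$ (the case $\lambda=0$ being included). I would expand $(\varphi,\varphi_i)=\sum_j v^j(\varphi_j,\varphi_i)$ and use the identity $(\varphi_j,\varphi_i)=\tfrac12 (A^{-1})_{ij}\,|\gamma_i|^2$ together with the standard structural fact that the inverse Cartan matrix $A^{-1}$ has non-negative entries (positive within each connected component of the diagram). Every summand is then non-negative, while the $j=i$ term equals $v^i(\varphi_i,\varphi_i)>0$ since $v^i>0$ and the scalar product on $\mathfrak h_{\mathbf R}^*$ is positive definite; hence $(\varphi,\varphi_i)>0$.

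The argument is almost entirely formal, so the one genuine input is the vanishing of the sum over $\Delta^+\cap\varphi^\perp$: this is where one must carefully combine the strict positivity $v^i>0$ coming from non-compactness of $\gamma_i$ with the root-theoretic description of $\Delta^+\cap\varphi^\perp$ from Lemma \ref{lem::vassumoffunddomweights}. I expect no real difficulty in the positivity of $(\varphi,\varphi_i)$ beyond citing the non-negativity of $A^{-1}$, so the crux of the proof is really the orthogonality relation $(\varphi_i,\alpha)=0$ for $\alpha\perp\varphi$.
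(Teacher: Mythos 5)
Your proof is correct and follows essentially the same route as the paper's: rewrite $\varphi'=\lambda\varphi$ via \eqref{eq::rho=lambdaomegaintermsoftau}, pair with $\varphi_i$, and observe that the sum over $\Delta^+\cap\varphi^\perp$ vanishes because $v^i>0$ forces $\varphi_i\perp\alpha$ for every such $\alpha$. The only difference is that you make explicit the positivity of $(\varphi,\varphi_i)$ via the non-negativity of the inverse Cartan matrix, a point the paper leaves implicit.
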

\begin{proof}
First of all recall that $\eta \in \mathfrak h_{\mathbf R}^*$ is defined by
\begin{equation*}
\eta = -2 \sum_{\alpha \in \Delta^+} \varepsilon_\alpha \alpha,
\end{equation*}
and note that it just depends on the Vogan diagram (in particular it does not depend on $\varphi$).
In the current notation, by \eqref{eq::rho=lambdaomegaintermsoftau}, the hypothesis $\rho= \lambda \varphi$ can be rewritten as
\begin{equation}\label{eq::specialityequivalent}
\eta - 2 \sum_{\alpha \in \Span\{ \gamma_j | j \in S^c\} \cap \Delta^+} \alpha = \lambda \varphi,
\end{equation}
where $S \subset \{1,\dots,\ell\}$ is the set of index such that $\varphi = \sum_{i \in S} v^i \varphi_i$ with $v^i>0$.
Note that any $\varphi_i$ with $i\in S$ is orthogonal to all roots $\alpha$ belonging to $\Span\{ \gamma_j | j \in S^c\}$.
On the other hand, any $i$ such that $\gamma_i$ is non-compact must belong to $S$.
The thesis follows by taking the scalar product of both sides of \eqref{eq::specialityequivalent} with $\varphi_i$ as in the statement.
\end{proof}

Finally, we discuss the integrability of $J$ when the adjoint orbit $(G/V,\omega,J)$ satisfies $\rho=\lambda\omega$.
Thanks to Theorem \ref{thm::|N|^2} this reduces to a problem on $\varphi$.
In order to simplify the statements of results below we give the following
\begin{defn}
We say that $\varphi$ is \emph{integrable} if the orbit $(G/V,\omega,J)$ of $v=ih_\varphi \in \mathfrak g$ has integrable canonical almost complex structure $J$.
\end{defn}

Recall that a Vogan diagram is said to be classical if the underlying Dynkin diagram is of type $A_\ell$, $B_\ell$, $C_\ell$, or $D_\ell$.
The next result states that integrable $\varphi$ for such diagrams are very rare.

\begin{thm}\label{thm::mostlynonintegrable}
Given a Vogan diagram of classical type having at least two painted nodes, any $\varphi$ belonging to the associated Weyl chamber is not integrable.
\end{thm}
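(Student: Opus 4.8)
The plan is to reduce the non-integrability of $J$ to the combinatorial criterion recorded in Remark \ref{rem::crtinnonintegrability}: it suffices to exhibit two non-compact positive roots $\alpha,\beta$ whose sum $\alpha+\beta$ is again a root. Let me first recall why this suffices uniformly in $\varphi$. If $\alpha=\sum_i n^i\gamma_i$ is non-compact, then by Corollary \ref{cor::epsilonalphaformula} the integer $\sum_{\gamma_i\in\Sigma^+_n} n^i$ is odd, so some painted node $\gamma_j$ occurs with $n^j>0$; since $v^j>0$ for every painted node (compactness of the stabilizer forces this), formula \eqref{eq::psiscalalpha} gives $(\varphi,\alpha)>0$. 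Hence every non-compact positive root already lies in $\Delta_n^+\setminus\varphi^\perp$, so the first sum of Theorem \ref{thm::|N|^2} contains the term $\frac{(\varphi,\alpha+\beta)}{(\varphi,\alpha)(\varphi,\beta)}N_{\alpha,\beta}^2$, which is strictly positive whenever $\alpha+\beta$ is a root (so that $N_{\alpha,\beta}\neq 0$ by Theorem \ref{thm::basisgc}). This yields $|N|^2>0$ for every admissible $\varphi$, which is exactly what is claimed.

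The heart of the proof is then the construction of $\alpha,\beta$ from the painted diagram. Among the (at least two) painted nodes I would select a pair $\gamma_p,\gamma_q$ of minimal distance in the Dynkin diagram. Minimality guarantees that no painted node lies in the interior of the unique path $\gamma_p=\gamma_{i_0},\gamma_{i_1},\dots,\gamma_{i_k}=\gamma_q$ joining them, for an interior painted $\gamma_r$ would satisfy $d(p,r)<d(p,q)$ and contradict the choice. I then set $\alpha=\gamma_{i_0}+\cdots+\gamma_{i_{k-1}}$ and $\beta=\gamma_{i_k}=\gamma_q$, so that $\alpha+\beta=\gamma_{i_0}+\cdots+\gamma_{i_k}$ is the full path sum. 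The support of $\alpha$ meets $\Sigma^+_n$ only in $\gamma_p$, whence $\sum_{\gamma\in\Sigma^+_n}n_\gamma=1$ and Corollary \ref{cor::epsilonalphaformula} gives $\varepsilon_\alpha=1$; similarly $\varepsilon_\beta=1$ because $\gamma_q$ is painted. Thus $\alpha$ and $\beta$ are both non-compact, as needed.

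What remains — and is the one genuinely type-dependent point — is to verify that $\alpha$ and $\alpha+\beta$ are indeed roots, i.e. that the sum of the simple roots along a path in a classical Dynkin diagram is always a root. Since the diagrams of types $A_\ell,B_\ell,C_\ell,D_\ell$ are trees, the nodes of such a path span a connected subdiagram which is a chain, hence of type $A$ (or of type $B$, resp. $C$, when the path terminates at the multiple edge). I expect this verification to be the main obstacle to write out, although it is entirely routine: in the standard coordinate models with $\gamma_i=e_i-e_{i+1}$ and $\gamma_\ell$ equal to $e_\ell$, $2e_\ell$, or $e_{\ell-1}+e_\ell$ in types $B$, $C$, $D$ respectively, every contiguous partial sum telescopes to one of $e_a-e_b$, $e_a+e_b$, $e_a$, or $2e_a$, each of which is a root. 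The only configuration requiring separate inspection is the $D_\ell$ fork, where the path joining $\gamma_{\ell-1}$ and $\gamma_\ell$ runs through $\gamma_{\ell-2}$ and produces $\gamma_{\ell-2}+\gamma_{\ell-1}+\gamma_\ell=e_{\ell-2}+e_{\ell-1}$, again a root. With $\alpha,\beta$ so constructed, an appeal to Remark \ref{rem::crtinnonintegrability} completes the argument for every $\varphi$ in the associated Weyl chamber.
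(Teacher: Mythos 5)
Your proposal is correct and follows essentially the same route as the paper: reduce non-integrability to exhibiting two non-compact positive roots $\alpha,\beta$ with $\alpha+\beta$ a root (via Theorem \ref{thm::|N|^2}), then build them from the two nearest painted nodes by summing the simple roots along the connecting path, with $\beta$ the terminal painted node. The only cosmetic differences are that you select a minimal-distance pair instead of the first two painted nodes in the standard labelling, and you make explicit the (correct) observation that non-compact roots are never orthogonal to $\varphi$, which the paper leaves implicit in Remark \ref{rem::crtinnonintegrability}.
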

\begin{proof}
Notice that the integrability of $\varphi$ depends quite weakly from $\varphi$ itself. Indeed, by Theorem \ref{thm::|N|^2} the norm of the Nijenhuis tensor of the canonical almost complex structure of the orbit of $v=ih_\varphi$ is non-zero as soon as there are two non-commuting non-compact roots. We are then reduced to show that classical Vogan diagrams with at least two painted nodes always admit a couple of non-commuting non-compact roots. The proof of this fact is done case-by-case. The explicit form of the roots is in \cite[Section~12]{Humphreys}
\begin{itemize}
\item $A_{\ell}$: the roots of $A_{\ell}$ are $\Delta=\lrbr{\varepsilon_i - \varepsilon_j\ \vert\ i\neq j}$, where $\varepsilon_1,\ldots,\varepsilon_{\ell}$ form the usual unit basis of $\mathbb{R}^{\ell}$. In particular, $\lrbr{\varepsilon_i-\varepsilon_{i+1}\ \vert 1\leq i\leq \ell}$ form a basis for $\Delta$. Hence, if we put $\gamma_i:=\varepsilon_i - \varepsilon_{i+1}$, we have that $\gamma_i$'s are just the simple roots for $\Delta$. 

Let $P=\lrbr{i_1,\ldots, i_m}$ be the (ordered) set of the painted nodes of the Vogan diagram. Put
\begin{equation*}
\alpha:=\sum_{j=i_1}^{i_2-1}\gamma_j=\sum_{j=i_1}^{i_2-1}\varepsilon_j-\varepsilon_{j+1}=\varepsilon_{i_1}-\varepsilon_{i_2}\in\Delta^+
\end{equation*}
and observe that $\alpha$ is non-compact since the sum of the coefficients relative to non-compact simple roots is just the coefficient of $\gamma_{i_1}$ which is 1 and so odd.
Then the linear combination
\begin{equation*}
\beta=\sum_{j=1_1}^{i_2}\gamma_j=\sum_{j=i_1}^{i_2}\varepsilon_j - \varepsilon_{j+1}=\varepsilon_{i_1}-\varepsilon_{i_2+1}\in \Delta^+
\end{equation*}
gives a root. Moreover $\beta=\alpha+\gamma_{i_2}$. This shows that $[\mathfrak{g}_{\alpha},\mathfrak{g}_{\gamma_{i_2}}]\subseteq\mathfrak{g}_{\beta}$ and $[\alpha,\gamma_{i_2}]\neq 0$. Thus the commutator of the non-compact roots $\alpha$ and $\gamma_{i_2}$ is non-zero.
\item $B_{\ell}$: the roots of $B_{\ell}$ are $\Delta=\lrbr{\pm\varepsilon_k,\ \pm(\varepsilon_i\pm\varepsilon_j)\ \vert\ i\neq j}$ and a basis is given by $\Sigma^+=\lrbr{\varepsilon_i-\varepsilon_{i+1},\varepsilon_{\ell}\ \vert\ 1\leq i\leq \ell-1}$. As above, put $\gamma_i=\varepsilon_i - \varepsilon_{i+1}$, $1\leq i\leq \ell-1$, $\gamma_{\ell}=\varepsilon_{\ell}$ and let $P=\lrbr{i_1,\ldots, i_m}$ be the (ordered) set of painted nodes of the Vogan diagram. We have to distinguish two cases.
\begin{itemize}
\item $i_1,i_2\in\lrbr{1,\ldots, \ell-1}$: it holds the same argument as for $A_{\ell}$;
\item $i_1\in\lrbr{1,\ldots,\ell-1},\ i_2=\ell$: put 
\begin{equation*}
\alpha=\sum_{j=i_1}^{\ell-1}\gamma_j=\sum_{j=i_2}^{\ell-1}\varepsilon_j - \varepsilon_{j+1}=\varepsilon_{i_1}-\varepsilon_{\ell}\in \Delta^+
\end{equation*}
and observe that $\alpha$ is non-compact. Then 
\begin{equation*}
\beta=\alpha+\gamma_{\ell}=\varepsilon_{i_1}-\varepsilon_{\ell} +\varepsilon_{\ell}=\varepsilon_{i_1}\in\Delta^+,
\end{equation*}
so, as above, $[\alpha,\gamma_{\ell}]\neq 0$.
\end{itemize}
\item $C_{\ell}$: the roots of $C_{\ell}$ are $\Delta=\lrbr{\pm 2\varepsilon_k,\ \pm(\varepsilon_i\pm\varepsilon_j)\ \vert\ i\neq j}$ and a basis is given by $\Sigma^+=\lrbr{\varepsilon_i-\varepsilon_{i+1},2\varepsilon_{\ell}\ \vert\ 1\leq i\leq \ell-1}$. As above, put $\gamma_i=\varepsilon_i - \varepsilon_{i+1}$, $1\leq i\leq \ell-1$, $\gamma_{\ell}=2\varepsilon_{\ell}$ and let $P=\lrbr{i_1,\ldots, i_m}$ be the (ordered) set of painted nodes of the Vogan diagram. We have to distinguish two cases.
\begin{itemize}
\item $i_1,i_2\in\lrbr{1,\ldots, \ell-1}$: it holds the same argument as for $A_{\ell}$;
\item $i_1\in\lrbr{1,\ldots,\ell-1},\ i_2=\ell$: put 
\begin{equation*}
\alpha=\sum_{j=i_1}^{\ell-1}\gamma_j=\sum_{j=i_2}^{\ell-1}\varepsilon_j - \varepsilon_{j+1}=\varepsilon_{i_1}-\varepsilon_{\ell}\in \Delta^+
\end{equation*}
and observe that $\alpha$ is non-compact. Then 
\begin{equation*}
\beta=\alpha+\gamma_{\ell}=\varepsilon_{i_1}-\varepsilon_{\ell} +2\varepsilon_{\ell}=\varepsilon_{i_1}+\varepsilon_{\ell}\in\Delta^+,
\end{equation*}
so, $[\alpha,\gamma_{\ell}]\neq 0$.
\end{itemize}
\item $D_{\ell}$: the roots of $D_{\ell}$ are $\Delta=\lrbr{\pm(\varepsilon_i\pm\varepsilon_j)\ \vert\ i\neq j}$ and a basis is given by $\Sigma^+=\lrbr{\varepsilon_i-\varepsilon_{i+1}, \varepsilon_{\ell-1}+\varepsilon_{\ell}\vert\ 1\leq i\leq \ell-1}$. As above, put $\gamma_i=\varepsilon_i - \varepsilon_{i+1}$, $1\leq i\leq \ell-1$, $\gamma_{\ell}=\varepsilon_{\ell-1}+\varepsilon_{\ell}$ and let $P=\lrbr{i_1,\ldots, i_m}$ be the (ordered) set of painted nodes of the Vogan diagram. We have to distinguish various cases.
\begin{itemize}
\item $i_1,i_2\in\lrbr{1,\ldots, \ell-1}$: it holds the same argument as for $A_{\ell}$;
\item $i_1\in\lrbr{1,\ldots,\ell-2},\ i_2=\ell-1$: put 
\begin{equation*}
\alpha=\sum_{j=i_1}^{\ell-2}\gamma_j=\sum_{j=i_2}^{\ell-2}\varepsilon_j - \varepsilon_{j+1}=\varepsilon_{i_1}-\varepsilon_{\ell-1}\in \Delta^+
\end{equation*}
and observe that $\alpha$ is non-compact. Then 
\begin{equation*}
\beta=\alpha+\gamma_{\ell-1}=\varepsilon_{i_1}-\varepsilon_{\ell-1} +\varepsilon_{\ell-1}-\varepsilon_{\ell}=\varepsilon_{i_1}-\varepsilon_{\ell}\in\Delta^+,
\end{equation*}
so, as above, $[\alpha,\gamma_{\ell-1}]\neq 0$;
\item $i_1\in\lrbr{1,\ldots,\ell-2},\ i_2=\ell$: put 
\begin{equation*}
\alpha=\sum_{j=i_1}^{\ell-2}\gamma_j=\sum_{j=i_2}^{\ell-2}\varepsilon_j - \varepsilon_{j+1}=\varepsilon_{i_1}-\varepsilon_{\ell-1}\in \Delta^+
\end{equation*}
and observe that $\alpha$ is non-compact. Then 
\begin{equation*}
\beta=\alpha+\gamma_{\ell}=\varepsilon_{i_1}-\varepsilon_{\ell-1} +\varepsilon_{\ell-1}+\varepsilon_{\ell}=\varepsilon_{i_1}+\varepsilon_{\ell}\in\Delta^+,
\end{equation*}
so, as above, $[\alpha,\gamma_{\ell}]\neq 0$;
\item $(i_1,i_2)=(\ell-1,\ell-2)$: put 
\begin{equation*}
\alpha=\gamma_{\ell-2}+\gamma_{\ell}=\varepsilon_{\ell-2}-\varepsilon_{\ell-1}+\varepsilon_{\ell-1}+\varepsilon_{\ell}=\varepsilon_{\ell-2}+\varepsilon_{\ell}\in\Delta^+
\end{equation*}
and observe that $\alpha$ is non-compact. Then 
\begin{equation*}
\beta=\alpha+\gamma_{\ell-1}=\varepsilon_{\ell-2}+\varepsilon_{\ell} +\varepsilon_{\ell-1}-\varepsilon_{\ell}=\varepsilon_{\ell-2}+\varepsilon_{\ell-1}\in\Delta^+,
\end{equation*}
so, as above, $[\alpha,\gamma_{\ell}]\neq 0$;
\end{itemize}
\end{itemize}
Summing up, for any classical Vogan diagram with at least 2 painted nodes, we found a couple of non commuting non-compact roots.
\end{proof}

As a consequence of the theorem above, only Vogan diagrams of classical type having one painted node may admit integrable $\varphi$'s. 
If $\varphi$ is also special, we can say which they are.
Indeed we have the following
\begin{thm}\label{thm::integrabilityonenode}
Given a Vogan diagram of classical type having just one painted node, assume that $\varphi$ belongs to the associated Weyl chamber and it is special and integrable.
Then the Vogan diagram is one of the following: all diagrams of type $A_{\ell}$,
\begin{center}
\begin{tikzpicture}[scale=0.6]
     \node[anchor=south] at (-50mm,-5mm) {$B_{\ell}$};
   \draw[thick] (-24mm ,0) circle (1mm);
    \node[anchor=south] at (-24mm,1mm) {$1$};
    \node[anchor=north] at (-24mm,-1mm) {$\gamma_1$};
    \draw[thick] (-8mm,0) circle (1mm);
    \node[anchor=south] at (-8mm,1mm) {$2$};
    \node[anchor=north] at (-8mm,-1mm) {$\gamma_2$};
    \draw[thick] (8mm,0) circle (1mm);
    \node[anchor=south] at (8mm,1mm) {$2$};
    \node[anchor=north] at (8mm,-1mm) {$\gamma_3$};
    \draw[thick] (24mm,0) circle (1mm);
    \node[anchor=south] at (24mm,1mm) {$2$};
    \node[anchor=north] at (24mm,-1mm) {$\gamma_{\ell-2}$};
    \draw[thick] (40mm,0) circle (1mm);
    \node[anchor=south] at (40mm,1mm) {$2$};
    \node[anchor=north] at (40mm,-1mm) {$\gamma_{\ell-1}$};
    \draw[thick,fill=black] (56mm,0) circle (1mm);
    \node[anchor=south] at (56mm,1mm) {$2$};
    \node[anchor=north] at (56mm,-1mm) {$\gamma_{\ell}$};
    \draw[thick] (-22mm,0.4mm) -- (-10mm,0.4mm);
    \draw[thick] (-22mm,-0.4mm) -- (-10mm,-0.4mm);
    \draw[thick] (-6mm,0) -- (6mm,0);
    \draw[dotted] (10mm,0) -- (22mm,0);
    \draw[thick] (26mm,0) -- (38mm,0);
    \draw[thick] (42mm,0) -- (54mm,0);
\end{tikzpicture}
\begin{tikzpicture}[scale=0.6]
     \node[anchor=south] at (-50mm,-5mm) {$C_{\ell}$};
   \draw[thick] (-24mm ,0) circle (1mm);
    \node[anchor=south] at (-24mm,1mm) {$1$};
    \node[anchor=north] at (-24mm,-1mm) {$\gamma_1$};
    \draw[thick] (-8mm,0) circle (1mm);
    \node[anchor=south] at (-8mm,1mm) {$1$};
    \node[anchor=north] at (-8mm,-1mm) {$\gamma_2$};
    \draw[thick] (8mm,0) circle (1mm);
    \node[anchor=south] at (8mm,1mm) {$1$};
    \node[anchor=north] at (8mm,-1mm) {$\gamma_3$};
    \draw[thick] (24mm,0) circle (1mm);
    \node[anchor=south] at (24mm,1mm) {$1$};
    \node[anchor=north] at (24mm,-1mm) {$\gamma_{\ell-2}$};
    \draw[thick] (40mm,0) circle (1mm);
    \node[anchor=south] at (40mm,1mm) {$1$};
    \node[anchor=north] at (40mm,-1mm) {$\gamma_{\ell-1}$};
    \draw[thick,fill=black] (56mm,0) circle (1mm);
    \node[anchor=south] at (56mm,1mm) {$2$};
    \node[anchor=north] at (56mm,-1mm) {$\gamma_{\ell}$};
    \draw[thick] (-22mm,0mm) -- (-10mm,0mm);
    \draw[thick] (-6mm,0mm) -- (6mm,0mm);
    \draw[dotted] (10mm,0mm) -- (22mm,0mm);
    \draw[thick] (26mm,0mm) -- (38mm,0mm);
    \draw[thick] (42mm,-0.4mm) -- (54mm,-0.4mm);
    \draw[thick] (42mm,0.4mm) -- (54mm,0.4mm);
\end{tikzpicture}
\begin{tikzpicture}[scale=0.6]
     \node[anchor=south] at (-50mm,-5mm) {$D_{\ell}$};
   \draw[thick,fill=black] (-24mm ,0) circle (1mm);
    \node[anchor=north] at (-24mm,-1mm) {$\gamma_1$};
    \draw[thick] (-8mm,0) circle (1mm);
    \node[anchor=north] at (-8mm,-1mm) {$\gamma_2$};
    \draw[thick] (8mm,0) circle (1mm);
    \node[anchor=north] at (8mm,-1mm) {$\gamma_3$};
    \draw[thick] (24mm,0) circle (1mm);
    \node[anchor=north] at (24mm,-1mm) {$\gamma_{\ell-3}$};
    \draw[thick] (40mm,0) circle (1mm);
    \node[anchor=north] at (40mm,-1mm) {$\gamma_{\ell-2}$};
    \draw[thick] (56mm,14mm) circle (1mm);
    \node[anchor=north] at (65mm,18mm) {$\gamma_{\ell-1}$};
    \draw[thick] (56mm,-14mm) circle (1mm);
    \node[anchor=north] at (63mm,-10mm) {$\gamma_{\ell}$};
    \draw[thick] (-22mm,0mm) -- (-10mm,0mm);
    \draw[thick] (-6mm,0mm) -- (6mm,0mm);
    \draw[dotted] (10mm,0mm) -- (22mm,0mm);
    \draw[thick] (26mm,0mm) -- (38mm,0mm);
    \draw[thick] (42mm,1mm) -- (54.5mm,12.5mm);
    \draw[thick] (42mm,-1mm) -- (54.5mm,-12.5mm);
\end{tikzpicture}
\begin{tikzpicture}[scale=0.6]
     \node[anchor=south] at (-50mm,-5mm) {};
   \draw[thick] (-24mm ,0) circle (1mm);
    \node[anchor=north] at (-24mm,-1mm) {$\gamma_1$};
    \draw[thick] (-8mm,0) circle (1mm);
    \node[anchor=north] at (-8mm,-1mm) {$\gamma_2$};
    \draw[thick] (8mm,0) circle (1mm);
    \node[anchor=north] at (8mm,-1mm) {$\gamma_3$};
    \draw[thick] (24mm,0) circle (1mm);
    \node[anchor=north] at (24mm,-1mm) {$\gamma_{\ell-3}$};
    \draw[thick] (40mm,0) circle (1mm);
    \node[anchor=north] at (40mm,-1mm) {$\gamma_{\ell-2}$};
    \draw[thick,fill=black] (56mm,14mm) circle (1mm);
    \node[anchor=north] at (65mm,18mm) {$\gamma_{\ell-1}$};
    \draw[thick] (56mm,-14mm) circle (1mm);
    \node[anchor=north] at (63mm,-10mm) {$\gamma_{\ell}$};
    \draw[thick] (-22mm,0mm) -- (-10mm,0mm);
    \draw[thick] (-6mm,0mm) -- (6mm,0mm);
    \draw[dotted] (10mm,0mm) -- (22mm,0mm);
    \draw[thick] (26mm,0mm) -- (38mm,0mm);
    \draw[thick] (42mm,1mm) -- (54.5mm,12.5mm);
    \draw[thick] (42mm,-1mm) -- (54.5mm,-12.5mm);
\end{tikzpicture}
\end{center}
Moreover,  $\varphi$ coincides up to scaling with  the fundamental dominant weight corresponding to the painted node. 
\end{thm}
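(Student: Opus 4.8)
The plan is to read off integrability from the formula for $|N|^2$ in Theorem \ref{thm::|N|^2}, to reduce the problem to a single numerical invariant of the painted node, and then to invoke Proposition \ref{prop::uniquenesstype} to identify $\varphi$. First I would establish that $\varphi$ is integrable if and only if no two non-compact positive roots sum to a root. The forward direction is immediate: every summand in Theorem \ref{thm::|N|^2} is non-negative, so $|N|^2=0$ forces $N_{\alpha,\beta}=0$ for all non-compact $\alpha,\beta\in\Delta^+$ (recall that no non-compact root is orthogonal to $\varphi$, so $\Delta_n^+\setminus\varphi^\perp=\Delta_n^+$). For the converse one checks that the mixed sum in Theorem \ref{thm::|N|^2} is controlled by the first sum: a nonzero term there needs $\alpha$ compact, $\beta$ non-compact and $\alpha-\beta\in\Delta^+$, and then $\alpha=(\alpha-\beta)+\beta$ together with $\varepsilon_{\alpha}=-\varepsilon_{\alpha-\beta}\varepsilon_{\beta}$ shows $\alpha-\beta$ is non-compact, so $N_{\alpha-\beta,\beta}\neq 0$ already makes the first sum positive. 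This is the precise form of Remark \ref{rem::crtinnonintegrability}.

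Next I would convert this into a statement about the painted node $\gamma_p$. By Corollary \ref{cor::epsilonalphaformula} a root $\alpha=\sum_i n^i\gamma_i$ is non-compact exactly when $n^p$ is odd, so the criterion becomes: no positive root is a sum of two positive roots with odd $\gamma_p$-coefficient. I claim this holds precisely when the mark $m_p$ of $\gamma_p$ (its coefficient in the highest root, equal to $\max_{\alpha\in\Delta^+}n^p(\alpha)$) equals $1$. If $m_p=1$, every positive root has $n^p\in\{0,1\}$, so a sum of two roots with $n^p=1$ would have $n^p=2$ and cannot be a root. If instead $m_p\geq 2$, choose a positive root $\delta$ of minimal height with $n^p(\delta)\geq 2$; since $\delta$ is not simple there is a simple root $\gamma_i$ with $\delta-\gamma_i\in\Delta^+$, and minimality forces $i=p$ and $n^p(\delta)=2$, so $\delta-\gamma_p\in\Delta^+$ has $n^p=1$. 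Then $\delta=(\delta-\gamma_p)+\gamma_p$ exhibits $\delta$ as a sum of the two non-compact positive roots $\delta-\gamma_p$ and $\gamma_p$, so $\varphi$ is not integrable. This minimal-height argument is uniform and avoids any case-by-case work.

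With the equivalence between integrability and the condition $m_p=1$ established, the first assertion reduces to computing the marks of the classical highest roots and listing the nodes of mark one: all $\ell$ nodes of $A_\ell$, one extreme node of each of $B_\ell$ and $C_\ell$, and (up to the order-two diagram symmetry) the nodes $\gamma_1$ and $\gamma_{\ell-1}$ of $D_\ell$. These are exactly the painted diagrams displayed in the statement. For the ``moreover'', I would use that $\varphi_p$ is special by Proposition \ref{prop::singlenodespecial}, together with the identity $(\eta,\varphi_p)=-|\gamma_p|^2\sum_{\alpha\in\Delta^+}\varepsilon_\alpha\, n^p(\alpha)$ coming from \eqref{eq::psiscalalpha}. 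When $m_p=1$ only the roots with $n^p=1$ contribute, each with $\varepsilon_\alpha=+1$, so $(\eta,\varphi_p)<0$. By Lemma \ref{lem::naturefromVogandiagram} (applied to the non-compact simple root $\gamma_p$) this forces $\lambda<0$, and by Lemma \ref{lem::lambdaintermsofvarphiandtheta} the constant associated to $\varphi_p$ is negative as well; Proposition \ref{prop::uniquenesstype} then shows that any two special elements of $C$ with negative constants are proportional, so $\varphi$ is a positive multiple of $\varphi_p$.

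The main obstacle is the second step: recognizing that the single relevant invariant is the mark $m_p$ of the painted node, and proving the clean equivalence with integrability. Once the minimal-height root $\delta$ is produced everything else is routine---the enumeration is pure bookkeeping on the classical root systems (the only genuinely classical-type input), and the identification of $\varphi$ follows formally from the sign computation and the uniqueness already established in Proposition \ref{prop::uniquenesstype}.
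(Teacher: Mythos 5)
Your proposal is correct, but it reaches the conclusion by a genuinely different route than the paper. For the list of diagrams, the paper does not touch the Nijenhuis tensor at all: it invokes the Carlson--Toledo theorem (Theorem \ref{thm: noHSnoKahler}) to force $G/K$ to be Hermitian symmetric and then simply cites Knapp's classification of the corresponding Vogan diagrams. You instead extract from Theorem \ref{thm::|N|^2} the exact criterion that $J$ is integrable if and only if no two non-compact positive roots sum to a root (your verification that the mixed sum is dominated by the first sum, via $\varepsilon_\alpha=-\varepsilon_{\alpha-\beta}\varepsilon_\beta$, correctly upgrades Remark \ref{rem::crtinnonintegrability} to an equivalence), and then show by the minimal-height argument that this holds precisely when the painted node has mark $1$ in the highest root. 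This is self-contained, purely root-theoretic, avoids the harmonic-map machinery, and in addition proves the converse implication (mark $1$ implies integrable), which the paper's citation of \cite{CarlsonToledo1989} does not by itself deliver; the price is the explicit computation of the marks, which the paper outsources to the Hermitian symmetric classification. For the ``moreover'' clause the paper argues by contradiction that a special $\varphi$ not proportional to $\varphi_p$ would make the orbit symplectic Fano (via Proposition \ref{prop::uniquenesstype} and Theorem \ref{thm::specialvCY}) and then appeals to Myers' theorem through Lemma \ref{lem: sympFanoNotKahler}; you instead compute $(\eta,\varphi_p)=-|\gamma_p|^2\sum_{\alpha}\varepsilon_\alpha n^p(\alpha)<0$ when the mark is $1$, deduce $\lambda<0$ from Lemma \ref{lem::naturefromVogandiagram}, and conclude by the negative-constant uniqueness clause of Proposition \ref{prop::uniquenesstype}. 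Both arguments are sound; yours has the advantage of not relying on the topology of compact quotients, and it makes transparent that integrability alone (without speciality) already pins down the diagram.
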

\begin{proof}
Assume that the unique painted node is the $p$-th one and the rank of the Lie algebra is $\ell$. Since $\varphi$ belongs to the dominant Weyl chamber, it is a non-negative linear combination of the fundamental dominant weights $\varphi=\sum_{i=1}^{\ell}v^i\varphi_i$ with $v^p>0$.
Suppose that $\varphi$ is not a multiple of $\varphi_p$.
Since $\varphi$ is special, Proposition \ref{prop::uniquenesstype} and Theorem \ref{thm::specialvCY} force the corresponding orbit to be symplectic Fano, contradicting the integrability condition by Lemma \ref{lem: sympFanoNotKahler}. Thus $\varphi$ is a multiple of $\varphi_p$.
Finally, consider a maximal compact subgroup $K$ of $G$.
Note that by Theorem \ref{thm: noHSnoKahler}, the integrability of the canonical almost complex structure of the orbit forces $G/K$ to be Hermitian symmetric. The thesis follows by the fact that the associated Vogan diagrams are classified \cite[Appendix C.3]{Knapp1996}.
\end{proof}


The previous theorems say which classical Vogan diagrams admit integrable special $\varphi$. In what follows, we treat the exceptional cases. 

Let $\mathfrak{g}$ be a non-compact real exceptional Lie algebra with trivial automorphism, i.e., one among 
\begin{equation*}
\mathfrak{g}_{2(2)},\ \mathfrak{f}_{4(4)},\ \mathfrak{f}_{4(-20)},\ \mathfrak{e}_{6(2)},\ \mathfrak{e}_{6(-14)},\ \mathfrak{e}_{7(7)},\ \mathfrak{e}_{7(-5)},\ \mathfrak{e}_{7(-25)},\ \mathfrak{e}_{8(8)},\  \mathfrak{e}_{8(-24)},
\end{equation*}
where we follow the notation as in \cite{Helgason1978}.
Given an element $v\in\mathfrak{g}$ having compact stabilizer $V\subset G$, define $\varphi$ by $v=ih_{\varphi}$ and suppose it is integrable. Then the canonical complex structure $J$ on $G/V$ descends to an integrable $J_{\Gamma}$ on $M=\Gamma\backslash G/V$, for $\Gamma\subset G$ a uniform lattice making $M$ a K\"ahler manifold. Thus, in force of theorem \ref{thm: noHSnoKahler}, the quotient $G/K$ with $K\subset G$ a maximal compact subgroup has to be Hermitian symmetric. In particular, the only possibilities for $\mathfrak{g}$ are $\mathfrak{g}=\mathfrak{e}_{6(-14)}$ and $\mathfrak{g}=\mathfrak{e}_{7(-25)}$ \cite[Chapter X, table V]{Helgason1978}. 
By the Borel and de Siebenthal theorem \cite[Chapter~VI]{Knapp1996}, in order to find exceptional Vogan diagrams admitting special and integrable $\varphi$, it suffices to look at the ones equivalent to the Vogan diagrams of $\mathfrak{e}_{6(-14)}$ and $\mathfrak{e}_{7(-25)}$. To do this, we look at the tables of $E_6$ and $E_7$ in Appendix \ref{sec::appendix}. 
\begin{itemize}
\item $\mathfrak{e}_{6(-14)}$: there are three Vogan diagrams equivalent to the one of $\mathfrak{e}_{6(-14)}$. First, the diagram with $\gamma_2$ and $\gamma_4$ painted is excluded form our analysis since the associated orbit is symplectic Fano and thus it cannot admit integrable complex structure by Lemma \ref{lem: sympFanoNotKahler}. For the diagram with $\gamma_1$ and $\gamma_5$ painted, observe that the roots
\begin{equation}
\alpha=\gamma_1+\gamma_2+\gamma_3+\gamma_4+\gamma_6,\qquad \beta=\gamma_5
\end{equation}
are both non compact and their sum $\sum_{i=1}^6 \gamma_i$ is a root. Thus we found two non-commuting non-compact roots and by Theorem \ref{thm::|N|^2} the canonical almost complex structure cannot be integrable. We are left with the Vogan diagram of $\mathfrak{e}_{6(-14)}$ which is symplectic general type. Uniqueness of the special vector then follows by Theorem \ref{prop::singlenodespecial} and Proposition \ref{prop::uniquenesstype}.
\item $\mathfrak{e}_{7(-25)}$: in this case there is just one diagram to consider.
Uniqueness of the special vector then follows by Theorem \ref{prop::singlenodespecial} and Proposition \ref{prop::uniquenesstype} for the Vogan diagram being symplectic general type.
\end{itemize}
Summarizing, we have proved the following

\begin{thm}
Given a Vogan diagram of exceptional type, assume that $\varphi$ belongs to the associated Weyl chamber and it is special and integrable.
Then the Vogan diagram is one of the following
\begin{center}
\begin{tikzpicture}[scale=0.7]
    \draw[thick,fill=black] (0,0) circle (1mm);
    \node[anchor=north] at (0,-1mm) {$\gamma_1$};
    \draw[thick] (12mm,0) circle (1mm);
    \node[anchor=north] at (12mm,-1mm) {$\gamma_2$};
    \draw[thick] (24mm,0) circle (1mm);
    \node[anchor=north] at (24mm,-1mm) {$\gamma_3$};
    \draw[thick] (36mm,0) circle (1mm);
    \node[anchor=north] at (36mm,-1mm) {$\gamma_4$};
    \draw[thick] (48mm,0) circle (1mm);
    \node[anchor=north] at (48mm,-1mm) {$\gamma_5$};
    \draw[thick] (24mm,12mm) circle (1mm);
    \node[anchor=west] at (25mm,12mm) {$\gamma_6$};
    \draw[thick] (2mm,0) -- (10mm,0);
    \draw[thick] (14mm,0) -- (22mm,0);
    \draw[thick] (26mm,0) -- (34mm,0);
    \draw[thick] (38mm,0) -- (46mm,0);
    \draw[thick] (24mm,2mm) -- (24mm,10mm);
\end{tikzpicture}
\begin{tikzpicture}[scale=0.7]
    \draw[thick,fill=black] (0,0) circle (1mm)
    node[anchor=north] at +(0,-1mm) {$\gamma_1$};
    \draw[thick] (12mm,0) circle (1mm)
    node[anchor=north] at +(0,-1mm) {$\gamma_2$};
    \draw[thick] (24mm,0) circle (1mm)
    node[anchor=north] at +(0,-1mm) {$\gamma_3$};
    \draw[thick] (36mm,0) circle (1mm)
    node[anchor=north] at +(0,-1mm) {$\gamma_4$};
    \draw[thick] (48mm,0) circle (1mm)
    node[anchor=north] at +(0,-1mm) {$\gamma_5$};
    \draw[thick] (60mm,0) circle (1mm)
    node[anchor=north] at +(0,-1mm) {$\gamma_6$};
    \draw[thick] (36mm,12mm) circle (1mm)
    node[anchor=west] at +(0,-1mm) {$\gamma_7$};
    \draw[thick] ++(2mm,0) -- ++(8mm,0)
    ++(4mm,0) -- ++(8mm,0)
    ++(4mm,0) -- ++(8mm,0)
    ++(2mm,2mm) -- ++(0,8mm)
    ++(2mm,-10mm) -- ++(8mm,0)
    ++(4mm,0) -- ++(8mm,0);
\end{tikzpicture}
\end{center}
Moreover,  $\varphi$ coincindes up to scaling with  the fundamental dominant weight corresponding to the painted node. 
\end{thm}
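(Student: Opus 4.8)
The plan is to reduce integrability to a topological constraint and then eliminate all but finitely many exceptional Vogan diagrams by an explicit root computation. The starting point is that if $\varphi$ is integrable, then the canonical structure $J$ descends to an integrable $J_\Gamma$ on a compact quotient $M=\Gamma\backslash G/V$, making $M$ K\"ahler; by the Carlson--Toledo obstruction recalled in Theorem \ref{thm: noHSnoKahler}, this forces the symmetric space $G/K$ of a maximal compact $K\subset G$ to be Hermitian symmetric. Running through Helgason's list of irreducible Hermitian symmetric spaces of exceptional type \cite[Chapter X]{Helgason1978}, only two of the ten real forms survive, namely $\mathfrak{e}_{6(-14)}$ and $\mathfrak{e}_{7(-25)}$, so the underlying Dynkin diagram is necessarily $E_6$ or $E_7$.

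It then remains to decide which Vogan diagrams for these two algebras can support a special and integrable $\varphi$. Since a real form is represented by a whole Borel--de Siebenthal equivalence class of Vogan diagrams \cite[Chapter VI]{Knapp1996}, I would enumerate the class of the standard diagram of $\mathfrak{e}_{6(-14)}$ and of $\mathfrak{e}_{7(-25)}$; these are exactly the entries of the $E_6$ and $E_7$ tables of Appendix \ref{sec::appendix}. Each member is tested against the sharp non-integrability criterion of Remark \ref{rem::crtinnonintegrability}: if two positive non-compact roots $\alpha,\beta$ satisfy $\alpha+\beta\in\Delta$, then $|N|^2>0$ by Theorem \ref{thm::|N|^2} and $\varphi$ cannot be integrable. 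For $\mathfrak{e}_{6(-14)}$ the diagram with $\gamma_1,\gamma_5$ painted is killed by the explicit pair $\alpha=\gamma_1+\gamma_2+\gamma_3+\gamma_4+\gamma_6$, $\beta=\gamma_5$, while the diagram with $\gamma_2,\gamma_4$ painted yields a symplectic Fano orbit and is excluded by Lemma \ref{lem: sympFanoNotKahler}; for $\mathfrak{e}_{7(-25)}$ there is only the single-node diagram to begin with. In both cases only the Vogan diagram with the single painted node $\gamma_1$ remains.

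Finally, for each surviving diagram the painted node $\gamma_1$ is unique, so Proposition \ref{prop::singlenodespecial} shows that $\varphi_1$ is special; computing $(\eta,\varphi_1)$ (or reading it off the Appendix) shows via Lemma \ref{lem::naturefromVogandiagram} that the associated constant is negative, i.e. the orbit is symplectic general type. Since $\varphi$ is itself special, the sign part of Proposition \ref{prop::uniquenesstype} gives that its constant is likewise negative, and the uniqueness part then forces $\varphi$ to be a positive multiple of $\varphi_1$. This establishes both the classification of the diagrams and the assertion that $\varphi$ is, up to scaling, the fundamental dominant weight of the painted node.

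I expect the only genuine difficulty to lie in the second step: correctly generating the Borel--de Siebenthal equivalence class of each standard diagram and, for every non-surviving member, either exhibiting an explicit pair of non-commuting non-compact roots or certifying the Fano obstruction. This is the one truly case-by-case part of the argument, since everything else follows structurally from the results already established; the $E_6$ and $E_7$ root combinatorics, for which the tables in Appendix \ref{sec::appendix} do the heavy lifting, is where care is needed.
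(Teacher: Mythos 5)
Your proposal follows essentially the same route as the paper: the Carlson--Toledo obstruction reduces to $\mathfrak{e}_{6(-14)}$ and $\mathfrak{e}_{7(-25)}$, the Borel--de Siebenthal equivalence class is scanned using the same explicit non-commuting non-compact root pair for the $\gamma_1,\gamma_5$ diagram and the same symplectic Fano exclusion for the $\gamma_2,\gamma_4$ diagram, and the final identification of $\varphi$ with $\varphi_1$ up to scaling rests on Proposition \ref{prop::singlenodespecial} and Proposition \ref{prop::uniquenesstype} exactly as in the text. The argument is correct as proposed.
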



\pagebreak

\appendix
\section{Vogan diagrams with special $\varphi$}\label{sec::appendix}

Following notation and terminology of Section \ref{sec::Vogandiagrams}, in this Appendix we classify all special $\varphi$ associated to connected Vogan diagrams of rank at most $\ell=4$ or exceptional.
This classification comes from results discussed in previous sections.
In particular, we have the following

\begin{prop}
Let $G$ be a non-compact real simple Lie group either of rank $\ell \leq 4$ or of exceptional type, and let
$(G/V,\omega,J)$ be an adjoint orbit of $G$ endowed with the canonical almost K\"ahler structure.
If $(G/V,\omega,J)$ satisfies $\rho=\lambda\omega$ then it is isomorphic up to scaling to the orbit of $v=ih_\varphi$ for some $\varphi$ contained in the following tables.
\end{prop}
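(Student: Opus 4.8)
The plan is to convert the analytic condition $\rho = \lambda\omega$ into a purely combinatorial problem on Vogan diagrams and then carry out the resulting finite enumeration. First I would invoke Lemma \ref{lem::correspVogandiagramwithv} together with Proposition \ref{prop::fromvogantoalgebrastabilizer} to set up a correspondence between an orbit $(G/V,\omega,J)$ with $G$ simple and the pair consisting of a connected Vogan diagram with trivial automorphism (encoding $\mathfrak g$ and the splitting of the simple roots into compact and non-compact ones) together with the coordinate vector $(v^1,\dots,v^\ell)$ of $\varphi = \sum_i v^i \varphi_i$ in the basis of fundamental dominant weights, subject to $v^i \geq 0$ and $v^i>0$ precisely at the painted nodes. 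Since $G$ is simple the Dynkin diagram is connected, and since rescaling $\varphi$ rescales both $\omega$ and $\lambda$ accordingly, it suffices to classify such $\varphi$ up to positive scaling.

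Next, by the reduction of Section \ref{sec::spacialitycondition}, the condition $\rho = \lambda\omega$ is equivalent to $\varphi' = \lambda\varphi$. I would then apply Theorems \ref{thm::specialvFGT} and \ref{thm::specialvCY}, which translate speciality into the existence of a subset $S \subseteq \{1,\dots,\ell\}$ containing all non-compact nodes and satisfying an explicit identity determined by $\eta$ and the sub-root-system spanned by $\{\gamma_i \mid i \in S^c\}$. For $\lambda \neq 0$ this pins down $\varphi$ up to scaling, and uniquely when $\lambda<0$ by Proposition \ref{prop::uniquenesstype}; for $\lambda = 0$ it singles out an entire open cone of solutions, so the table records the combinatorial type $S$ (equivalently, the admissible cone) rather than a single vector.

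Because there are only finitely many connected Dynkin diagrams of rank $\ell \leq 4$, namely $A_1,A_2,A_3,A_4,B_2,B_3,B_4,C_3,C_4,D_4,G_2,F_4$, and of exceptional type, $G_2,F_4,E_6,E_7,E_8$, finitely many ways to paint their nodes up to diagram automorphism, and for each painted diagram finitely many candidate subsets $S$, the entire classification is a finite verification. For each diagram I would compute $\eta$ from the explicit roots, determine the compactness of every root via Corollary \ref{cor::epsilonalphaformula}, and for each $S$ test the defining identity and the open-cone condition of the relevant theorem; surviving data are recorded in the tables, after discarding diagrams representing isomorphic real forms using the Borel--de Siebenthal equivalence of Vogan diagrams.

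The main obstacle is the sheer combinatorial volume of this case analysis, particularly for $E_7$ and $E_8$ where the root systems are large; this is precisely why the listing is delegated to a computer implementation. A secondary subtlety is that two inequivalent-looking painted diagrams may yield the same orbit up to isomorphism, so some care with Vogan-diagram equivalence is needed to avoid redundant or spurious entries, and the $\lambda=0$ case must be recorded as a family rather than as a single vector.
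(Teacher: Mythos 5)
Your proposal matches the paper's own argument: the authors likewise reduce $\rho=\lambda\omega$ to the combinatorial condition of Theorems \ref{thm::specialvFGT} and \ref{thm::specialvCY} via the Vogan-diagram correspondence of Lemma \ref{lem::correspVogandiagramwithv} and Proposition \ref{prop::fromvogantoalgebrastabilizer}, and then carry out the finite enumeration by computer over all connected Vogan diagrams of the relevant ranks. The approach and the supporting results you invoke are essentially identical to those in Sections \ref{sec::spacialitycondition} and \ref{sec::Vogandiagrams}.
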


For each simple Lie algebra type, we list the fundamental dominant weights $\varphi_1,\dots,\varphi_\ell$ in terms of the simple roots $\gamma_1,\dots,\gamma_\ell$.
For each Vogan diagram we list all special $\varphi$ expressed as a sum of dominant weights $\varphi_i$'s, and for each of them we give the following data:
\begin{itemize}
\item whether $\varphi$ is a root,
\item the symplectic type of the orbit of $v$ and the integrability of the canonical complex structure $J$ (we write sGT, sCY or sF if the orbit satisfy $\rho = \lambda \omega$ with $\lambda<0$, $\lambda=0$, $\lambda>0$ respectively, and removing the 's' when when $J$ is integrable),
\item the Hermitian scalar curvature $s$ of the canonical complex structure $J$ (note that from this one can readily calculate $\lambda$ form the identity $s = \lambda \dim G/V$),
\item the dimensions of the stabilizer $V$ of $v$ and of the orbit $G/V$,
\item the Lie algebras of $G$ and $V$.
\end{itemize}

The choice of the acronyms at the second point is motivated as follows.
Given an adjoint orbit $(G/V,\omega,J)$ satisfying $\rho=\lambda \omega$, and a uniform lattice $\Gamma \subset G$, the quotient $(\Gamma\backslash G/V,\omega_\Gamma,J_\Gamma)$ is a compact almost K\"ahler manifold satisfying $\rho_\Gamma = \lambda \omega_\Gamma$ as soon as $\Gamma\backslash G/V$ is smooth. Therefore, as discussed in Section \ref{sec::compactquotients}, $(\Gamma\backslash G/V,\omega_\Gamma)$ is symplectic General type, Calabi-Yau or Fano according to the sign of $\lambda$.
In other words, sGT, sCY, sF denote the symplectic type of any compact quotient of $(G/V,\omega)$.

In order to determine which Lie algebra is associated to a Vogan diagram with more than one painted node, we  used the rules discussed in \cite{ChuahHu2004}.

As a matter of notation, we largely follow Knapp \cite{Knapp1996}.
Just for exceptional simple Lie algebras we follow Helgason notation \cite{Helgason1978}.
For convenience of the reader, we specify the relationship between the two notations in the notes of each exceptional Lie algebra type.

\small

\newcommand{\VoganDiag}[2]
{
\begin{tikzpicture}[scale=0.7]
    \draw[thick,fill={#1}] (0 ,0) circle (1mm);
    \node[anchor=south] at (0,1mm) {$1$};
    \node[anchor=north] at (0,-1mm) {$\gamma_1$};
    \draw[thick,fill={#2}] (12mm,0) circle (1mm);
    \node[anchor=south] at (12mm,1mm) {$1$};
    \node[anchor=north] at (12mm,-1mm) {$\gamma_2$};
    \draw[thick] (2mm,0) -- (10mm,0);
\end{tikzpicture}
}
\begin{center}
\hspace*{-30mm}
\begin{tabular}{|*9{c|}}
\hline
\Huge $A_2$ & \multicolumn{5}{|c|}{
\begin{minipage}[b]{0.6\columnwidth}
\smallskip
$\dim \mathfrak g = 8$. One non-compact simple real form with trivial automorphism: $\mathfrak{su}(1,2)$.
\end{minipage}
} & \multicolumn{3}{l|}{
\begin{minipage}[b]{0.4\columnwidth}
\smallskip
\raggedright Fundamental dominant weights \\ 
$
\begin{array}{lll}
\varphi_1 &=& \frac{2}{3}\gamma_1 + \frac{1}{3}\gamma_2 \\ 
\varphi_2 &=& \frac{1}{3}\gamma_1 + \frac{2}{3}\gamma_2 \\ 
\end{array}
$
\end{minipage}} \\
\hline
\hline
Vogan diagram & $\varphi$ & $\varphi \in \Delta$ & Type & s & $\dim V$ & $\dim G/V$ & $\mathfrak g$ & $\mathfrak v$ \\
\hline
\VoganDiag{black}{white}
& $ \varphi_1 $ & no & GT & $-12$ & $4$ & $4$ & $\mathfrak{su}(1,2)$ & $\mathfrak{su}(2) \oplus \mathbf R$ \\
\hline
\VoganDiag{black}{black}
& \shortstack{$t_1 \varphi_1 + t_2 \varphi_2$ \\ \\ for all $t_1,t_2>0$}
& no & sCY & $0$ & 2 & 6 & $\mathfrak{su}(1,2)$ & $\mathbf R \oplus \mathbf R$ \\
\hline
\end{tabular}
\end{center}

\vspace{1cm}

\renewcommand{\VoganDiag}[2]
{
\begin{tikzpicture}[scale=0.7]
    \draw[thick,fill={#1}] (0,0) circle (1mm);
    \node[anchor=south] at (0,1mm) {$1$};
    \node[anchor=north] at (0,-1mm) {$\gamma_1$};
    \draw[thick,fill={#2}] (12mm,0) circle (1mm);
    \node[anchor=south] at (12mm,1mm) {$2$};
    \node[anchor=north] at (12mm,-1mm) {$\gamma_2$};
    \draw[thick] (2mm,0.4mm) -- (10mm,0.4mm);
    \draw[thick] (2mm,-0.4mm) -- (10mm,-0.4mm);
\end{tikzpicture}
}
\begin{center}
\hspace*{-30mm}
\begin{tabular}{|*9{c|}}
\hline
\Huge $B_2$ & \multicolumn{5}{|c|}{
\begin{minipage}[b]{0.5\columnwidth}%
\smallskip
$\dim \mathfrak g = 10$. Two simple non-compact real forms: $\mathfrak{so}(4,1)$, $\mathfrak{so}(2,3)$.
\end{minipage}
} & \multicolumn{3}{l|}{
\begin{minipage}[b]{0.5\columnwidth}
\smallskip
\raggedright Fundamental dominant weights \\ 
$
\begin{array}{lll}
\varphi_1 &= \gamma_1 + \frac{1}{2} \gamma_2 \\ 
\varphi_2 &= \gamma_1 + \gamma_2 \\ 
\end{array}
$
\end{minipage}} \\
\hline
\hline
Vogan diagram & $\varphi$ & $\varphi \in \Delta$ & Type & s & $\dim V$ & $\dim G/V$ & $\mathfrak g$ & $\mathfrak v$ \\
\hline
\VoganDiag{black}{white}
& $ \varphi_1 $ & no & sCY & $0$ & $4$ & $6$ & $\mathfrak{so}(4,1)$ & $\mathfrak{su}(2) \oplus \mathbf R$ \\
\hline
\VoganDiag{white}{black}
& $ \varphi_2 $ & yes & GT & $-18$ & $4$ & $6$ & $\mathfrak{so}(2,3)$ & $\mathfrak{su}(2) \oplus \mathbf R$ \\
\hline
\end{tabular}
\end{center}

\vspace{1cm}

\renewcommand{\VoganDiag}[2]
{
\begin{tikzpicture}[scale=0.7]
    \draw[thick,fill={#1}] (0 ,0) circle (1mm);
    \node[anchor=south] at (0,1mm) {$1$};
    \node[anchor=north] at (0,-1mm) {$\gamma_1$};
    \draw[thick,fill={#2}] (12mm,0) circle (1mm);
    \node[anchor=south] at (12mm,1mm) {$3$};
    \node[anchor=north] at (12mm,-1mm) {$\gamma_2$};
    \draw[thick] (2mm,0.6mm) -- (10mm,0.6mm);
    \draw[thick] (2mm,0) -- (10mm,0);
    \draw[thick] (2mm,-0.6mm) -- (10mm,-0.6mm);
\end{tikzpicture}
}
\begin{center}
\hspace*{-30mm}
\begin{tabular}{|*9{c|}}
\hline
\Huge $G_2$ & \multicolumn{5}{|c|}{
\begin{minipage}[b]{0.5\columnwidth}%
\smallskip
$\dim \mathfrak g = 14$. One non-compact simple real form denoted by $\mathfrak{g}_{2(2)}=\mathrm{G}$. 
\end{minipage}
} & \multicolumn{3}{l|}{
\begin{minipage}[b]{0.5\columnwidth}
\smallskip
\raggedright Fundamental dominant weights \\ 
$
\begin{array}{lll}
\varphi_1 &= 2 \gamma_1 + \gamma_2 \\
\varphi_2 &= 3 \gamma_1 + 2 \gamma_2 \\ 
\end{array}
$
\end{minipage}} \\
\hline
\hline
Vogan diagram & $\varphi$ & $\varphi \in \Delta$ & Type & s & $\dim V$ & $\dim G/V$ & $\mathfrak g$ & $\mathfrak v$ \\
\hline
\VoganDiag{black}{white}
& $ \varphi_1 $ & yes & sGT & $-30$ & $4$ & $10$ & $\mathfrak{g}_{2(2)}$ & $\mathfrak{su}(2) \oplus \mathbf R$ \\
\hline
\VoganDiag{white}{black}
& $ \varphi_2 $ & yes & sGT & $-10$ & $4$ & $10$ & $\mathfrak{g}_{2(2)}$ & $\mathfrak{su}(2) \oplus \mathbf R$ \\
\hline
\end{tabular}
\end{center}


\topmargin=-50pt
\textheight=620pt

\renewcommand{\VoganDiag}[3]
{
\begin{tikzpicture}[scale=0.7]
    \draw[thick,fill={#1}] (0,0) circle (1mm);
    \node[anchor=south] at (0,1mm) {$1$};
    \node[anchor=north] at (0,-1mm) {$\gamma_1$};
    \draw[thick,fill={#2}] (12mm,0) circle (1mm);
    \node[anchor=south] at (12mm,1mm) {$1$};
    \node[anchor=north] at (12mm,-1mm) {$\gamma_2$};
    \draw[thick,fill={#3}] (24mm,0) circle (1mm);
    \node[anchor=south] at (24mm,1mm) {$1$};
    \node[anchor=north] at (24mm,-1mm) {$\gamma_3$};
    \draw[thick] (2mm,0) -- (10mm,0);
    \draw[thick] (14mm,0) -- (22mm,0);
\end{tikzpicture}
}
\begin{center}
\hspace*{-30mm}
\begin{tabular}{|*9{c|}}
\hline
\Huge $A_3$ & \multicolumn{5}{|c|}{
\begin{minipage}[b]{0.5\columnwidth}%
\smallskip
$\dim \mathfrak g = 15$. Two non-compact simple real forms with trivial automorphism: $\mathfrak{su}(1,3)$, $\mathfrak{su}(2,2)$.
\end{minipage}
} & \multicolumn{3}{l|}{
\begin{minipage}[b]{0.5\columnwidth}
\smallskip
\raggedright Fundamental dominant weights \\ 
$
\begin{array}{lll}
\varphi_1 &= \frac{3}{4}\gamma_1 + \frac{1}{2}\gamma_2 + \frac{1}{4}\gamma_3 \\ 
\varphi_2 &= \frac{1}{2}\gamma_1 + \gamma_2 + \frac{1}{2}\gamma_3 \\ 
\varphi_3 &= \frac{1}{4}\gamma_1 + \frac{1}{2}\gamma_2 + \frac{3}{4}\gamma_3 \\ 
\end{array}
$
\end{minipage}} \\
\hline
\hline
Vogan diagram & $\varphi$ & $\varphi \in \Delta$ & Type & s & $\dim V$ & $\dim G/V$ & $\mathfrak g$ & $\mathfrak v$ \\
\hline
\VoganDiag{black}{white}{white}
& $\varphi_1$
& no & GT & $-24$ & $9$ & $6$ & $\mathfrak{su}(1,3)$ & $\mathfrak{su}(3) \oplus \mathbf R$ \\
\hline
\VoganDiag{white}{black}{white}
& $\varphi_2$
& no & GT & $-32$ & $7$ & $8$ & $\mathfrak{su}(2,2)$ & $\mathfrak{su}(2) \oplus \mathfrak{su}(2) \oplus \mathbf R$ \\
\hline
\VoganDiag{black}{white}{black}
& $\varphi_1+\varphi_3$
& yes & sGT & $-10$ & $5$ & $10$ & $\mathfrak{su}(2,2)$ & $\mathfrak{su}(2) \oplus \mathbf R \oplus \mathbf R$ \\
\hline
\end{tabular}
\end{center}
%

\renewcommand{\VoganDiag}[3]
{
\begin{tikzpicture}[scale=0.7]
    \draw[thick,fill={#1}] (0,0) circle (1mm);
    \node[anchor=south] at (0,1mm) {$1$};
    \node[anchor=north] at (0,-1mm) {$\gamma_1$};
    \draw[thick,fill={#2}] (12mm,0) circle (1mm);
    \node[anchor=south] at (12mm,1mm) {$2$};
    \node[anchor=north] at (12mm,-1mm) {$\gamma_2$};
    \draw[thick,fill={#3}] (24mm,0) circle (1mm);
    \node[anchor=south] at (24mm,1mm) {$2$};
    \node[anchor=north] at (24mm,-1mm) {$\gamma_3$};
    \draw[thick] (2mm,0.4mm) -- (10mm,0.4mm);
    \draw[thick] (2mm,-0.4mm) -- (10mm,-0.4mm);
    \draw[thick] (14mm,0) -- (22mm,0);
\end{tikzpicture}
}
\begin{center}
\hspace*{-30mm}
\begin{tabular}{|*9{c|}}
\hline
\Huge $B_3$ & \multicolumn{5}{|c|}{
\begin{minipage}[b]{0.5\columnwidth}%
\smallskip
$\dim \mathfrak g = 21$. Three non-compact simple real forms: $\mathfrak{so}(6,1)$, $\mathfrak{so}(4,3)$, $\mathfrak{so}(2,5)$.
\end{minipage}
} & \multicolumn{3}{l|}{
\begin{minipage}[b]{0.5\columnwidth}
\smallskip
\raggedright Fundamental dominant weights \\ 
$
\begin{array}{lll}
\varphi_1 &= \frac{3}{2}\gamma_1 + \gamma_2 + \frac{1}{2}\gamma_3 \\ 
\varphi_2 &= 2\gamma_1 + 2\gamma_2 + \gamma_3 \\ 
\varphi_3 &= \gamma_1 + \gamma_2 + \gamma_3 \\ 
\end{array}
$
\end{minipage}} \\
\hline
\hline
Vogan diagram & $\varphi$ & $\varphi \in \Delta$ & Type & s & $\dim V$ & $\dim G/V$ & $\mathfrak g$ & $\mathfrak v$ \\
\hline
\VoganDiag{black}{white}{white}
& $\varphi_1$
& no & sF & $24$ & $9$ & $12$ & $\mathfrak{so}(6,1)$ & $\mathfrak{su}(3) \oplus \mathbf R$ \\
\hline
\VoganDiag{white}{black}{white}
& $\varphi_2$
& yes & sGT & $-28$ & $7$ & $14$ & $\mathfrak{so}(4,3)$ & $\mathfrak{su}(2) \oplus \mathfrak{su}(2) \oplus \mathbf R$ \\
\hline
\VoganDiag{white}{white}{black}
& $\varphi_3$
& yes & GT & $-50$ & $11$ & $10$ & $\mathfrak{so}(2,5)$ & $\mathfrak{so}(5) \oplus \mathbf R$ \\
\hline
\end{tabular}
\end{center}
%

\renewcommand{\VoganDiag}[3]
{
\begin{tikzpicture}[scale=0.7]
    \draw[thick,fill={#1}] (0,0) circle (1mm);
    \node[anchor=south] at (0,1mm) {$1$};
    \node[anchor=north] at (0,-1mm) {$\gamma_1$};
    \draw[thick,fill={#2}] (12mm,0) circle (1mm);
    \node[anchor=south] at (12mm,1mm) {$1$};
    \node[anchor=north] at (12mm,-1mm) {$\gamma_2$};
    \draw[thick,fill={#3}] (24mm,0) circle (1mm);
    \node[anchor=south] at (24mm,1mm) {$2$};
    \node[anchor=north] at (24mm,-1mm) {$\gamma_3$};
    \draw[thick] (2mm,0) -- (10mm,0);
    \draw[thick] (14mm,0.4mm) -- (22mm,0.4mm);
    \draw[thick] (14mm,-0.4mm) -- (22mm,-0.4mm);
\end{tikzpicture}
}
\begin{center}
\hspace*{-30mm}
\begin{tabular}{|*9{c|}}
\hline
\Huge $C_3$ & \multicolumn{5}{|c|}{
\begin{minipage}[b]{0.5\columnwidth}%
\smallskip
$\dim \mathfrak g = 21$. Two non-compact simple real forms: $\mathfrak{sp}(1,2)$, $\mathfrak{sp}(3,\mathbf{R})$.
\end{minipage}
} & \multicolumn{3}{l|}{
\begin{minipage}[b]{0.5\columnwidth}
\smallskip
\raggedright Fundamental dominant weights \\ 
$
\begin{array}{lll}
\varphi_1 &= \gamma_1 + \gamma_2 + \frac{1}{2}\gamma_3 \\ 
\varphi_2 &= \gamma_1 + 2\gamma_2 + \gamma_3 \\ 
\varphi_3 &= \gamma_1 + 2\gamma_2 + \frac{3}{2}\gamma_3 \\ 
\end{array}
$
\end{minipage}} \\
\hline
\hline
Vogan diagram & $\varphi$ & $\varphi \in \Delta$ & Type & s & $\dim V$ & $\dim G/V$ & $\mathfrak g$ & $\mathfrak v$ \\
\hline
\VoganDiag{black}{white}{white}
& $\varphi_1$
& no & sGT & $-20$ & $11$ & $10$ & $\mathfrak{sp}(1,2)$ & $\mathfrak{sp}(2) \oplus \mathbf R$ \\
\hline
\VoganDiag{white}{black}{white}
& $\varphi_2$
& yes & sF & $14$ & $7$ & $14$ & $\mathfrak{sp}(1,2)$ & $\mathfrak{su}(2) \oplus \mathfrak{su}(2) \oplus \mathbf R$ \\
\hline
\VoganDiag{white}{white}{black}
& $\varphi_3$
& no & GT & $-48$ & $9$ & $12$ & $\mathfrak{sp}(3,\mathbf R)$ & $\mathfrak{su}(3) \oplus \mathbf R$ \\
\hline
\VoganDiag{black}{white}{black}
& $\varphi_1+\varphi_3$
& no & sGT & $-16$ & $5$ & $16$ & $\mathfrak{sp}(3,\mathbf{R})$ & $\mathfrak{su}(2) \oplus \mathbf R \oplus \mathbf R$ \\
\hline
\end{tabular}
\end{center}
%

\topmargin=20pt
\textheight=592pt

\renewcommand{\VoganDiag}[4]
{
\begin{tikzpicture}[scale=0.7]
    \draw[thick,fill={#1}] (0,0) circle (1mm);
    \node[anchor=south] at (0,1mm) {$1$};
    \node[anchor=north] at (0,-1mm) {$\gamma_1$};
    \draw[thick,fill={#2}] (12mm,0) circle (1mm);
    \node[anchor=south] at (12mm,1mm) {$1$};
    \node[anchor=north] at (12mm,-1mm) {$\gamma_2$};
    \draw[thick,fill={#3}] (24mm,0) circle (1mm);
    \node[anchor=south] at (24mm,1mm) {$1$};
    \node[anchor=north] at (24mm,-1mm) {$\gamma_3$};
    \draw[thick,fill={#4}] (36mm,0) circle (1mm);
    \node[anchor=south] at (36mm,1mm) {$1$};
    \node[anchor=north] at (36mm,-1mm) {$\gamma_4$};
    \draw[thick] (2mm,0) -- (10mm,0);
    \draw[thick] (14mm,0) -- (22mm,0);
    \draw[thick] (26mm,0) -- (34mm,0);
\end{tikzpicture}
}
\begin{center}
\hspace*{-39mm}
\begin{tabular}{|*9{c|}}
\hline
\Huge $A_4$ & \multicolumn{5}{|c|}{
\begin{minipage}[b]{0.5\columnwidth}%
\smallskip
$\dim \mathfrak g = 24$. Two non-compact simple real forms with trivial automorphism: $\mathfrak{su}(1,4)$, $\mathfrak{su}(2,3)$.
\end{minipage}
} & \multicolumn{3}{l|}{
\begin{minipage}[b]{0.5\columnwidth}
\smallskip
\raggedright Fundamental dominant weights \\ 
$
\begin{array}{lll}
\varphi_1 &= \frac{4}{5}\gamma_1 + \frac{3}{5}\gamma_2 + \frac{2}{5}\gamma_3 + \frac{1}{5}\gamma_4 \\ \vspace{-4mm} \\
\varphi_2 &= \frac{3}{5}\gamma_1 + \frac{6}{5}\gamma_2 + \frac{4}{5}\gamma_3 + \frac{2}{5}\gamma_4 \\ \vspace{-4mm} \\
\varphi_3 &= \frac{2}{5}\gamma_1 + \frac{4}{5}\gamma_2 + \frac{6}{5}\gamma_3 + \frac{3}{5}\gamma_4 \\ \vspace{-4mm} \\
\varphi_4 &= \frac{1}{5}\gamma_1 + \frac{2}{5}\gamma_2 + \frac{3}{5}\gamma_3 + \frac{4}{5}\gamma_4 \\ \vspace{-4mm} 
\end{array}
$
\end{minipage}} \\
\hline
\hline
Vogan diagram & $\varphi$ & $\varphi \in \Delta$ & Type & s & $\dim V$ & $\dim G/V$ & $\mathfrak g$ & $\mathfrak v$ \\
\hline
\VoganDiag{black}{white}{white}{white}
& $\varphi_1$
& no & GT & $-40$ & $16$ & $8$ & $\mathfrak{su}(1,4)$ & $\mathfrak{su}(4) \oplus \mathbf R$ \\
\hline
\VoganDiag{white}{black}{white}{white}
& $\varphi_2$
& no & GT & $-60$ & $12$ & $12$ & $\mathfrak{su}(2,3)$ & $\mathfrak{su}(3) \oplus \mathfrak{su}(2) \oplus \mathbf R$ \\
\hline
\VoganDiag{black}{white}{white}{black}
&$\varphi_1+\varphi_4$
& yes & sGT & $-28$ & $10$ & $14$ & $\mathfrak{su}(2,3)$ & $\mathfrak{su}(3) \oplus \mathbf R \oplus \mathbf R$ \\
\hline
\VoganDiag{white}{black}{black}{white}
& $\varphi_2+\varphi_3$
& no & sF & $16$ & $8$ & $16$ & $\mathfrak{su}(1,4)$ & $\mathfrak{su}(2) \oplus \mathfrak{su}(2) \oplus \mathbf R \oplus \mathbf R$ \\
\hline
\VoganDiag{black}{black}{black}{black}
&  \shortstack{$\sum_{i=1}^4t_i \varphi_i$ \\ for all $t_i>0$}
& no & sCY & $0$ & $4$ & $20$ & $\mathfrak{su}(2,3)$ & $\mathbf R \oplus \mathbf R \oplus \mathbf R \oplus \mathbf R$ \\
\hline
\end{tabular}
\end{center}
%
\vspace{1cm}

\renewcommand{\VoganDiag}[4]
{
\begin{tikzpicture}[scale=0.7]
    \draw[thick,fill={#1}] (0,0) circle (1mm);
    \node[anchor=south] at (0,1mm) {$1$};
    \node[anchor=north] at (0,-1mm) {$\gamma_1$};
    \draw[thick,fill={#2}] (12mm,0) circle (1mm);
    \node[anchor=south] at (12mm,1mm) {$2$};
    \node[anchor=north] at (12mm,-1mm) {$\gamma_2$};
    \draw[thick,fill={#3}] (24mm,0) circle (1mm);
    \node[anchor=south] at (24mm,1mm) {$2$};
    \node[anchor=north] at (24mm,-1mm) {$\gamma_3$};
    \draw[thick,fill={#4}] (36mm,0) circle (1mm);
    \node[anchor=south] at (36mm,1mm) {$2$};
    \node[anchor=north] at (36mm,-1mm) {$\gamma_4$};
    \draw[thick] (2mm,0.4mm) -- (10mm,0.4mm);
    \draw[thick] (2mm,-0.4mm) -- (10mm,-0.4mm);
    \draw[thick] (14mm,0) -- (22mm,0);
    \draw[thick] (26mm,0) -- (34mm,0);
\end{tikzpicture}
}
\begin{center}
\hspace*{-33mm}
\begin{tabular}{|*9{c|}}
\hline
\Huge $B_4$ & \multicolumn{5}{|c|}{
\begin{minipage}[b]{0.5\columnwidth}%
\smallskip
$\dim \mathfrak g = 36$. Four non-compact simple real forms: $\mathfrak{so}(8,1)$, $\mathfrak{so}(6,3)$, $\mathfrak{so}(4,5)$, $\mathfrak{so}(2,7)$.
\end{minipage}
} & \multicolumn{3}{l|}{
\begin{minipage}[b]{0.5\columnwidth}
\smallskip
\raggedright Fundamental dominant weights \\ 
$
\begin{array}{lll}
\varphi_1 &= 2\gamma_1 + \frac{3}{2}\gamma_2 + \gamma_3 + \frac{1}{2}\gamma_4 \\ 
\varphi_2 &= 3\gamma_1 + 3\gamma_2 + 2\gamma_3 + \gamma_4 \\ 
\varphi_3 &= 2\gamma_1 + 2\gamma_2 + 2\gamma_3 + \gamma_4 \\ 
\varphi_4 &= \gamma_1 + \gamma_2 + \gamma_3 + \gamma_4 \\ 
\end{array}
$
\end{minipage}} \\
\hline
\hline
Vogan diagram & $\varphi$ & $\varphi \in \Delta$ & Type & s & $\dim V$ & $\dim G/V$ & $\mathfrak g$ & $\mathfrak v$ \\
\hline
\VoganDiag{black}{white}{white}{white}
& $\varphi_1$
& no & sF & $80$ & $16$ & $20$ & $\mathfrak{so}(8,1)$ & $\mathfrak{su}(4) \oplus \mathbf R$ \\
& & & & & & & &\\
& $\varphi_1+2\varphi_4$
& no & sF & $52$ & $10$ & $26$ & $\mathfrak{so}(8,1)$ & $\mathfrak{su}(3) \oplus \mathbf R \oplus \mathbf R$ \\
\hline
\VoganDiag{white}{black}{white}{white}
& $\varphi_2$
& no & sGT & $-24$ & $12$ & $24$ & $\mathfrak{so}(6,3)$ & $\mathfrak{su}(3) \oplus \mathfrak{su}(2) \oplus \mathbf R$ \\
\hline
\VoganDiag{white}{white}{black}{white}
& $\varphi_3$
& yes & sGT & $-88$ & $14$ & $22$ & $\mathfrak{so}(4,5)$ & $\mathfrak{so}(5) \oplus \mathfrak{su}(2) \oplus \mathbf R$ \\
\hline
\VoganDiag{white}{white}{white}{black}
& $\varphi_4$
& yes & GT & $-98$ & $22$ & $14$ & $\mathfrak{so}(2,7)$ & $\mathfrak{so}(7) \oplus \mathbf R$ \\
\hline
\end{tabular}
\end{center}
%
%
\renewcommand{\VoganDiag}[4]
{
\begin{tikzpicture}[scale=0.7]
    \draw[thick,fill={#1}] (0,0) circle (1mm);
    \node[anchor=south] at (0,1mm) {$1$};
    \node[anchor=north] at (0,-1mm) {$\gamma_1$};
    \draw[thick,fill={#2}] (12mm,0) circle (1mm);
    \node[anchor=south] at (12mm,1mm) {$1$};
    \node[anchor=north] at (12mm,-1mm) {$\gamma_2$};
    \draw[thick,fill={#3}] (24mm,0) circle (1mm);
    \node[anchor=south] at (24mm,1mm) {$1$};
    \node[anchor=north] at (24mm,-1mm) {$\gamma_3$};
    \draw[thick,fill={#4}] (36mm,0) circle (1mm);
    \node[anchor=south] at (36mm,1mm) {$2$};
    \node[anchor=north] at (36mm,-1mm) {$\gamma_4$};
    \draw[thick] (2mm,0) -- (10mm,0);
    \draw[thick] (14mm,0) -- (22mm,0);
    \draw[thick] (26mm,0.4mm) -- (34mm,0.4mm);
    \draw[thick] (26mm,-0.4mm) -- (34mm,-0.4mm);
\end{tikzpicture}
}
\begin{center}
\hspace*{-38mm}
\begin{tabular}{|*9{c|}}
\hline
\Huge $C_4$ & \multicolumn{5}{|c|}{
\begin{minipage}[b]{0.5\columnwidth}%
\smallskip
$\dim \mathfrak g = 36$. Three non-compact simple real forms: $\mathfrak{sp}(1,3)$, $\mathfrak{sp}(2,2)$, $\mathfrak{sp}(4,\mathbf{R})$.
\end{minipage}
} & \multicolumn{3}{l|}{
\begin{minipage}[b]{0.5\columnwidth}
\smallskip
\raggedright Fundamental dominant weights \\ 
$
\begin{array}{lll}
\varphi_1 &= \gamma_1 + \gamma_2 + \gamma_3 + \frac{1}{2}\gamma_4 \\ 
\varphi_2 &= \gamma_1 + 2\gamma_2 + 2\gamma_3 + \gamma_4 \\ 
\varphi_3 &= \gamma_1 + 2\gamma_2 + 3\gamma_3 + \frac{3}{2}\gamma_4 \\ 
\varphi_4 &= \gamma_1 + 2\gamma_2 + 3\gamma_3 + 2\gamma_4 \\ 
\end{array}
$
\end{minipage}} \\

\hline
\hline
Vogan diagram & $\varphi$ & $\varphi \in \Delta$ & Type & s & $\dim V$ & $\dim G/V$ & $\mathfrak g$ & $\mathfrak v$ \\
\hline
\VoganDiag{black}{white}{white}{white}
& $\varphi_1$
& no & sGT & $-56$ & $22$ & $14$ & $\mathfrak{sp}(1,3)$ & $\mathfrak{sp}(3) \oplus \mathbf R$ \\
\hline
\VoganDiag{white}{black}{white}{white}
& $\varphi_2$
& yes & sGT & $-22$ & $14$ & $22$ & $\mathfrak{sp}(2,2)$ & $\mathfrak{su}(2) \oplus \mathfrak{sp}(2) \oplus \mathbf R$ \\
\hline
\VoganDiag{white}{white}{black}{white}
& $\varphi_3$
& no & sF & $48$ & $12$ & $24$ & $\mathfrak{sp}(1,3)$ & $\mathfrak{su}(3) \oplus \mathfrak{su}(2) \oplus \mathbf R$ \\
& & & & & & & &\\
& $3\varphi_1+\varphi_3$
& no & sF & $28$ & $8$ & $28$ & $\mathfrak{sp}(1,3)$ & $\mathfrak{su}(2) \oplus \mathfrak{su}(2) \oplus \mathbf R \oplus \mathbf R$ \\
\hline
\VoganDiag{white}{white}{white}{black}
& $\varphi_4$
& no & GT & $-100$ & $16$ & $20$ & $\mathfrak{sp}(4,\mathbf R)$ & $\mathfrak{su}(4) \oplus \mathbf R$\\
\hline
\end{tabular}
\end{center}
%

\vspace{1cm}

\renewcommand{\VoganDiag}[4]
{
\begin{tikzpicture}[scale=0.7]
    \draw[thick,fill={#1}] (-12mm ,0) circle (1mm);
    \node[anchor=north] at (-12mm,-1mm) {$\gamma_1$};
    \draw[thick,fill={#2}] (0,0) circle (1mm);
    \node[anchor=north] at (0,-1mm) {$\gamma_2$};
    \draw[thick,fill={#3}] (30:12mm) circle (1mm);
    \node[anchor=west] at (30:12mm) {$\gamma_3$};
    \draw[thick,fill={#4}] (-30:12mm) circle (1mm);
    \node[anchor=west] at (-30:12mm) {$\gamma_4$};
    \draw[thick] (30:2mm) -- (30:10mm);
    \draw[thick] (-10mm,0) -- (-2mm,0);
    \draw[thick] (-30:2mm) -- (-30:10mm);
\end{tikzpicture}
}
\begin{center}
\hspace*{-41mm}
\begin{tabular}{|*9{c|}}
\hline
\Huge $D_4$ & \multicolumn{5}{|c|}{
\begin{minipage}[b]{0.5\columnwidth}%
\smallskip
$\dim \mathfrak g = 28$. Two non-compact simple real forms with trivial automorphism: $\mathfrak{so}(2,6)$, $\mathfrak{so}(4,4)$.
\end{minipage}
} & \multicolumn{3}{l|}{
\begin{minipage}[b]{0.5\columnwidth}
\smallskip
\raggedright Fundamental dominant weights \\ \vspace{0.11cm}
$
\begin{array}{lll}
\varphi_1 &= \gamma_1 + \gamma_2 + \frac{1}{2}\gamma_3 + \frac{1}{2}\gamma_4 \\ 
\varphi_2 &= \gamma_1 + 2\gamma_2 + \gamma_3 + \gamma_4 \\ 
\varphi_3 &= \frac{1}{2}\gamma_1 + \gamma_2 + \gamma_3 + \frac{1}{2}\gamma_4 \\ 
\varphi_4 &= \frac{1}{2}\gamma_1 + \gamma_2 + \frac{1}{2}\gamma_3 + \gamma_4 \\ 
\end{array}
$
\end{minipage}} \\
\hline
\hline
Vogan diagram & $\varphi$ & $\varphi \in \Delta$ & Type & s & $\dim V$ & $\dim G/V$ & $\mathfrak g$ & $\mathfrak v$ \\
\hline
\VoganDiag{black}{white}{white}{white}
& $\varphi_1$
& no & GT & $-72$ & $16$ & $12$ & $\mathfrak{so}(2,6)$ & $\mathfrak{su}(4) \oplus \mathbf R$\\
\hline
\VoganDiag{white}{black}{white}{white}
& $\varphi_2$
& yes & sGT & $-54$ & $10$ & $18$ & $\mathfrak{so}(4,4)$ & $\mathfrak{su}(2) \oplus \mathfrak{su}(2) \oplus \mathfrak{su}(2) \oplus \mathbf R$\\
\hline
\VoganDiag{black}{white}{black}{white}
& \shortstack{$t_1 \varphi_1 + t_2 \varphi_3$ 
\\ for all $t_1,t_2>0$}
& no & sCY & $0$ & $10$ & $18$ & $\mathfrak{so}(2,6)$ & $\mathfrak{su}(3) \oplus \mathbf R \oplus \mathbf R$\\
\hline
\VoganDiag{black}{white}{black}{black}
& $\varphi_1+\varphi_3+\varphi_4$
& no & sGT & $-22$ & $6$ & $22$ & $\mathfrak{so}(4,4)$ & $\mathfrak{su}(2) \oplus \mathbf R \oplus \mathbf R \oplus \mathbf R$\\
\hline
\end{tabular}
\end{center}
%
\bigskip
%
%
\renewcommand{\VoganDiag}[4]
{
\begin{tikzpicture}[scale=0.7]
    \draw[thick,fill={#1}] (0,0) circle (1mm);
    \node[anchor=south] at (0,1mm) {$1$};
    \node[anchor=north] at (0,-1mm) {$\gamma_1$};
    \draw[thick,fill={#2}] (12mm,0) circle (1mm);
    \node[anchor=south] at (12mm,1mm) {$1$};
    \node[anchor=north] at (12mm,-1mm) {$\gamma_2$};
    \draw[thick,fill={#3}] (24mm,0) circle (1mm);
    \node[anchor=south] at (24mm,1mm) {$2$};
    \node[anchor=north] at (24mm,-1mm) {$\gamma_3$};
    \draw[thick,fill={#4}] (36mm,0) circle (1mm);
    \node[anchor=south] at (36mm,1mm) {$2$};
    \node[anchor=north] at (36mm,-1mm) {$\gamma_4$};
    \draw[thick] (2mm,0) -- (10mm,0);
    \draw[thick] (14mm,0.4mm) -- (22mm,0.4mm);
    \draw[thick] (14mm,-0.4mm) -- (22mm,-0.4mm);
    \draw[thick] (26mm,0) -- (34mm,0);
\end{tikzpicture}
}
\begin{center}
\hspace*{-36mm}
\begin{tabular}{|*9{c|}}
\hline
\Huge $F_4$ & \multicolumn{5}{|c|}{
\begin{minipage}[b]{0.5\columnwidth}%
\smallskip
$\dim \mathfrak g =  52$.
Two non-compact simple real forms denoted by 
$\mathfrak f_{4(4)} = \mathrm{F\,I}$, $\mathfrak f_{4(-20)} = \mathrm{F\,II}$.
\smallskip
\end{minipage}
} & \multicolumn{3}{l|}{
\begin{minipage}[b]{0.5\columnwidth}
\smallskip
\raggedright Fundamental dominant weights \\ 
$
\begin{array}{lll}
\varphi_1 &=& 2\gamma_1 + 3\gamma_2 + 2\gamma_3 + \gamma_4 \\
\varphi_2 &=& 3\gamma_1 + 6\gamma_2 + 4\gamma_3 + 2\gamma_4 \\
\varphi_3 &=& 4\gamma_1 + 8\gamma_2 + 6\gamma_3 + 3\gamma_4 \\
\varphi_4 &=& 2\gamma_1 + 4\gamma_2 + 3\gamma_3 + 2\gamma_4 \\ 
\end{array}
$
\end{minipage}
} \\
\hline
\hline
Vogan diagram & $\varphi$ & $\varphi \in \Delta$ & Type & s & $\dim V$ & $\dim G/V$ & $\mathfrak g$ & $\mathfrak v$ \\
\hline
\VoganDiag{black}{white}{white}{white}
& $\varphi_1$
& yes & sF & $90$ & $22$ & $30$ & $\mathfrak f_{4(-20)}$ & $\mathfrak{so}(7) \oplus \mathbf R$ \\
\hline
\VoganDiag{white}{black}{white}{white}
& $\varphi_2$
& no & sF & $120$ & $12$ & $40$ & $\mathfrak f_{4(-20)}$ & $\mathfrak{su}(3) \oplus \mathfrak{su}(2) \oplus \mathbf R$ \\
& & & & & & & &\\
& $\varphi_1+\varphi_2$
& no & sF & $84$ & $10$ & $42$ & $\mathfrak f_{4(-20)}$ & $\mathfrak{su}(3) \oplus \mathbf R \oplus \mathbf R$ \\
& & & & & & & &\\
& $\varphi_2+3\varphi_4$
& no & sF & $44$ & $8$ & $44$ & $\mathfrak f_{4(-20)}$ & $\mathfrak{su}(2) \oplus \mathfrak{su}(2) \oplus \mathbf R \oplus \mathbf R$ \\
\hline
\VoganDiag{white}{white}{black}{white}
& $\varphi_3$
& no & sGT & $-40$ & $12$ & $40$ & $\mathfrak f_{4(4)}$ & $\mathfrak{su}(3) \oplus \mathfrak{su}(2) \oplus \mathbf R$ \\
\hline
\VoganDiag{white}{white}{white}{black}
& $\varphi_4$
& yes & sGT & $-180$ & $22$ & $30$ & $\mathfrak f_{4(4)}$ & $\mathfrak{sp}(3) \oplus \mathbf R$ \\
\hline
\VoganDiag{black}{black}{white}{white}
& $\varphi_1+\varphi_2$
& no & sF & $84$ & $10$ & $42$ & $\mathfrak f_{4(-20)}$ & $\mathfrak{su}(3) \oplus \mathbf R \oplus \mathbf R$ \\
\hline
\VoganDiag{black}{white}{white}{black}
& $2\varphi_1+\varphi_4$
& no & sGT & $-40$ & $12$ & $40$ & $\mathfrak f_{4(4)}$ & $\mathfrak{so}(5) \oplus \mathbf R \oplus \mathbf R$ \\
\hline
\end{tabular}
\end{center}

\scriptsize

\renewcommand{\VoganDiag}[6]
{
\begin{tikzpicture}[scale=0.6]
    \draw[thick,fill={#1}] (0,0) circle (1mm);
    \node[anchor=north] at (0,-1mm) {$\gamma_1$};
    \draw[thick,fill={#2}] (12mm,0) circle (1mm);
    \node[anchor=north] at (12mm,-1mm) {$\gamma_2$};
    \draw[thick,fill={#3}] (24mm,0) circle (1mm);
    \node[anchor=north] at (24mm,-1mm) {$\gamma_3$};
    \draw[thick,fill={#4}] (36mm,0) circle (1mm);
    \node[anchor=north] at (36mm,-1mm) {$\gamma_4$};
    \draw[thick,fill={#5}] (48mm,0) circle (1mm);
    \node[anchor=north] at (48mm,-1mm) {$\gamma_5$};
    \draw[thick,fill={#6}] (24mm,12mm) circle (1mm);
    \node[anchor=west] at (25mm,12mm) {$\gamma_6$};
    \draw[thick] (2mm,0) -- (10mm,0);
    \draw[thick] (14mm,0) -- (22mm,0);
    \draw[thick] (26mm,0) -- (34mm,0);
    \draw[thick] (38mm,0) -- (46mm,0);
    \draw[thick] (24mm,2mm) -- (24mm,10mm);
\end{tikzpicture}
}
\begin{center}
\hspace*{-35mm}
\begin{tabular}{|*9{c|}}
\hline
\Huge $E_6$ & \multicolumn{5}{|c|}{
\begin{minipage}[b]{0.5\columnwidth}%
\smallskip
$\dim \mathfrak g = 78$.
Two non-compact simple real forms with trivial automorphism denoted by 
$\mathfrak e_{6(2)}=\mathrm{E\,II}$, $\mathfrak e_{6(-14)} = \mathrm{E\,III}$.
\smallskip
\end{minipage}
} & \multicolumn{3}{l|}{
\begin{minipage}[b]{0.5\columnwidth}
\smallskip
\raggedright Fundamental dominant weights \\ 
$
\begin{array}{lll}
\varphi_1 =& \frac{4}{3}\gamma_1 + \frac{5}{3}\gamma_2 + 2\gamma_3 + \frac{4}{3}\gamma_4 + \frac{2}{3}\gamma_5 + \gamma_6 \\ \vspace{-3mm} \\
\varphi_2 =& \frac{5}{3}\gamma_1 + \frac{10}{3}\gamma_2 + 4\gamma_3 + \frac{8}{3}\gamma_4 + \frac{4}{3}\gamma_5 + 2\gamma_6 \\ \vspace{-3mm} \\
\varphi_3 =& 2\gamma_1 + 4\gamma_2 + 6\gamma_3 + 4\gamma_4 + 2\gamma_5 + 3\gamma_6 \\ \vspace{-3mm} \\
\varphi_4 =& \frac{4}{3}\gamma_1 + \frac{8}{3}\gamma_2 + 4\gamma_3 + \frac{10}{3}\gamma_4 + \frac{5}{3}\gamma_5 + 2\gamma_6 \\ \vspace{-3mm} \\
\varphi_5 =& \frac{2}{3}\gamma_1 + \frac{4}{3}\gamma_2 + 2\gamma_3 + \frac{5}{3}\gamma_4 + \frac{4}{3}\gamma_5 + \gamma_6 \\ \vspace{-3mm} \\
\varphi_6 =& \gamma_1 + 2\gamma_2 + 3\gamma_3 + 2\gamma_4 + \gamma_5 + 2\gamma_6 \\ 
\end{array}
$
\end{minipage}
} \\
\hline
\hline
Vogan diagram & $\varphi$ & $\varphi \in \Delta$ & Type & s & $\dim V$ & $\dim G/V$ & $\mathfrak g$ & $\mathfrak v$ \\
\hline
\VoganDiag{black}{white}{white}{white}{white}{white}
& $\varphi_1$ 
& no & GT & $-384$ & $46$ & $32$ & $\mathfrak e_{6(-14)}$ & $\mathfrak{so}(10)\oplus \mathbf R$ \\
\hline
\VoganDiag{white}{black}{white}{white}{white}{white}
& $\varphi_2$
& no & sGT & $-150$ & $28$ & $50$ & $\mathfrak e_{6(2)}$ & $\mathfrak{su}(5)\oplus \mathfrak{su}(2)\oplus \mathbf R$ \\
\hline
\VoganDiag{white}{white}{black}{white}{white}{white}
& $\varphi_3$
& no & sGT & $-58$ & $20$ & $58$ & $\mathfrak e_{6(2)}$ & $\mathfrak{su}(3)\oplus \mathfrak{su}(3)\oplus \mathfrak{su}(2)\oplus \mathbf R$ \\
\hline
\VoganDiag{white}{white}{white}{white}{white}{black}
& $\varphi_6$
& yes & sGT & $-378$ & $36$ & $42$ & $\mathfrak e_{6(2)}$ & $\mathfrak{su}(6)\oplus \mathbf R$ \\
\hline
\VoganDiag{white}{black}{white}{black}{white}{white}
& $\varphi_2+\varphi_4$
& no & sF & $62$ & $16$ & $62$ & $\mathfrak e_{6(-14)}$ &  $\mathfrak{su}(3)\oplus \mathfrak{su}(2)\oplus \mathfrak{su}(2) \oplus \mathbf R \oplus \mathbf R$ \\
\hline
\VoganDiag{black}{white}{white}{white}{black}{white}
& \shortstack{$t_1\varphi_1 + t_2 \varphi_5$ \\ 
s.t. $t_1,t_2>0$}
& no & sCY & $0$ & $30$ & $48$ & $\mathfrak e_{6(-14)}$ & $\mathfrak{so}(8) \oplus \mathbf R \oplus \mathbf R$ \\
\hline
\end{tabular}
\end{center}

\bigskip
\renewcommand{\VoganDiag}[7]
{
\begin{tikzpicture}[scale=0.6]
    \draw[thick,fill={#1}] (0,0) circle (1mm)
    node[anchor=north] at +(0,-1mm) {$\gamma_1$};
    \draw[thick,fill={#2}] (12mm,0) circle (1mm)
    node[anchor=north] at +(0,-1mm) {$\gamma_2$};
    \draw[thick,fill={#3}] (24mm,0) circle (1mm)
    node[anchor=north] at +(0,-1mm) {$\gamma_3$};
    \draw[thick,fill={#4}] (36mm,0) circle (1mm)
    node[anchor=north] at +(0,-1mm) {$\gamma_4$};
    \draw[thick,fill={#5}] (48mm,0) circle (1mm)
    node[anchor=north] at +(0,-1mm) {$\gamma_5$};
    \draw[thick,fill={#6}] (60mm,0) circle (1mm)
    node[anchor=north] at +(0,-1mm) {$\gamma_6$};
    \draw[thick,fill={#7}] (36mm,12mm) circle (1mm)
    node[anchor=west] at +(0,-1mm) {$\gamma_7$};
    \draw[thick] ++(2mm,0) -- ++(8mm,0)
    ++(4mm,0) -- ++(8mm,0)
    ++(4mm,0) -- ++(8mm,0)
    ++(2mm,2mm) -- ++(0,8mm)
    ++(2mm,-10mm) -- ++(8mm,0)
    ++(4mm,0) -- ++(8mm,0);
\end{tikzpicture}
}

\begin{center}
\hspace*{-38mm}
\begin{tabular}{|*9{c|}}
\hline
\Huge $E_7$ & \multicolumn{5}{|c|}{
\begin{minipage}[b]{0.5\columnwidth}%
$\dim \mathfrak g = 133$.
Three non-compact simple real forms denoted by 
$\mathfrak e_{7(7)}=\mathrm{E\,V}$, $\mathfrak e_{7(-5)} = \mathrm{E\,VI}$, \\ $\mathfrak e_{7(-25)}=\mathrm{E\,VII}$.
\end{minipage}
} & \multicolumn{3}{l|}{
\begin{minipage}[b]{0.5\columnwidth}
\smallskip
\raggedright Fundamental dominant weights \\ 
$
\begin{array}{lll}
\varphi_1 &= \frac{3}{2} \gamma_1 + 2 \gamma_2 + \frac{5}{2} \gamma_3 + 3 \gamma_4 + 2 \gamma_5 + \gamma_6 + \frac{3}{2} \gamma_7 \\ \vspace{-0.2cm} \\
\varphi_2 &= 2 \gamma_1 + 4 \gamma_2 + 5 \gamma_3 + 6 \gamma_4 + 4 \gamma_5 + 2 \gamma_6 + 3 \gamma_7 \\ \vspace{-0.2cm} \\
\varphi_3 &= \frac{5}{2} \gamma_1 + 5 \gamma_2 + \frac{15}{2} \gamma_3 + 9 \gamma_4 + 6 \gamma_5 + 3 \gamma_6 + \frac{9}{2} \gamma_7 \\ \vspace{-0.2cm} \\
\varphi_4 &= 3 \gamma_1 + 6 \gamma_2 + 9 \gamma_3 + 12 \gamma_4 + 8 \gamma_5 + 4 \gamma_6 + 6 \gamma_7 \\ \vspace{-0.2cm} \\
\varphi_5 &= 2 \gamma_1 + 4 \gamma_2 + 6 \gamma_3 + 8 \gamma_4 + 6 \gamma_5 + 3 \gamma_6 + 4 \gamma_7 \\ \vspace{-0.2cm} \\
\varphi_6 &= \gamma_1 + 2 \gamma_2 + 3 \gamma_3 + 4 \gamma_4 + 3 \gamma_5 + 2 \gamma_6 + 2 \gamma_7 \\ \vspace{-0.2cm} \\
\varphi_7 &= \frac{3}{2} \gamma_1 + 3 \gamma_2 + \frac{9}{2} \gamma_3 + 6 \gamma_4 + 4 \gamma_5 + 2 \gamma_6 + \frac{7}{2} \gamma_7 \\ \vspace{-0.23cm} \\
\end{array}
$
\end{minipage}
} \\
%
\hline
\hline
Vogan diagram & $\varphi$ & $\varphi \in \Delta$ & Type & s & $\dim V$ & $\dim G/V$ & $\mathfrak g$ & $\mathfrak v$  \\
\hline
\VoganDiag{black}{white}{white}{white}{white}{white}{white}
& $ \varphi_1 $ & no & GT & $-972$ & $79$ & $54$ & $\mathfrak e_{7(-25)}$ & $\mathfrak e_6 \oplus \mathbf R$\\
\hline
\VoganDiag{white}{black}{white}{white}{white}{white}{white}
& $ \varphi_2 $ & no & sGT & $-252$ & $49$ & $84$ & $\mathfrak e_{7(-5)}$ & $\mathfrak{so}(10) \oplus \mathfrak{su}(2) \oplus \mathbf R$ \\
\hline
\VoganDiag{white}{white}{black}{white}{white}{white}{white}
& $ \varphi_3 $ & no & sGT & $-200$ & $33$ & $100$ & $\mathfrak e_{7(7)}$ & $\mathfrak{su}(5) \oplus \mathfrak{su}(3) \oplus \mathbf R$ \\
\hline
\VoganDiag{white}{white}{white}{black}{white}{white}{white}
& $ \varphi_4 $ & no & sCY & $0$ & $27$ & $106$ & $\mathfrak e_{7(-5)}$ & $\mathfrak{su}(4) \oplus \mathfrak{su}(3) \oplus \mathfrak{su}(2) \oplus \mathbf R$ \\
\hline
\VoganDiag{white}{white}{white}{white}{black}{white}{white}
& $ \varphi_5 $ & no & sGT & $-94$ & $39$ & $94$ & $\mathfrak e_{7(-5)}$ & $\mathfrak{su}(6) \oplus \mathfrak{su}(2) \oplus \mathbf R$\\
\hline
\VoganDiag{white}{white}{white}{white}{white}{black}{white}
& $ \varphi_6 $ & yes & sGT & $-990$ & $67$ & $66$ & $\mathfrak e_{7(-5)}$ & $\mathfrak{so}(12) \oplus \mathbf R$\\
\hline
\VoganDiag{white}{white}{white}{white}{white}{white}{black}
& $ \varphi_7 $ & no & sGT & $-504$ & $49$ & $84$ & $\mathfrak e_{7(7)}$ & $\mathfrak{su}(7) \oplus \mathbf R$\\
\hline
\end{tabular}
\end{center}

\bigskip
\renewcommand{\VoganDiag}[8]
{
\begin{tikzpicture}[scale=0.6]
    \draw[thick,fill={#1}] (0,0) circle (1mm)
    node[anchor=north] at +(0,-1mm) {$\gamma_1$};
    \draw[thick,fill={#2}] (10mm,0) circle (1mm)
    node[anchor=north] at +(0,-1mm) {$\gamma_2$};
    \draw[thick,fill={#3}] (20mm,0) circle (1mm)
    node[anchor=north] at +(0,-1mm) {$\gamma_3$};
    \draw[thick,fill={#4}] (30mm,0) circle (1mm)
    node[anchor=north] at +(0,-1mm) {$\gamma_4$};
    \draw[thick,fill={#5}] (40mm,0) circle (1mm)
    node[anchor=north] at +(0,-1mm) {$\gamma_5$};
    \draw[thick,fill={#6}] (50mm,0) circle (1mm)
    node[anchor=north] at +(0,-1mm) {$\gamma_6$};
    \draw[thick,fill={#7}] (60mm,0) circle (1mm)
    node[anchor=north] at +(0,-1mm) {$\gamma_7$};
    \draw[thick,fill={#8}] (40mm,10mm) circle (1mm)
    node[anchor=west] at +(0,-1mm) {$\gamma_8$};
    \draw[thick] ++(2mm,0) -- ++(6mm,0)
    ++(4mm,0) -- ++(6mm,0)
    ++(4mm,0) -- ++(6mm,0)
    ++(4mm,0) -- ++(6mm,0)
    ++(2mm,2mm) -- ++(0,6mm)
    ++(2mm,-8mm) -- ++(6mm,0)
    ++(4mm,0) -- ++(6mm,0);
\end{tikzpicture}
}
%
\begin{center}
\hspace*{-40mm}
\begin{tabular}{|*9{c|}}
\hline
\Huge $E_8$ & \multicolumn{5}{|c|}{
\begin{minipage}[b]{0.4\columnwidth}%
$\dim \mathfrak g =  248$.
Two non-compact simple real forms denoted by 
$\mathfrak e_{8(8)}=\mathrm{E\,VIII}$, $\mathfrak e_{8(-24)} = \mathrm{E\,IX}$.
\end{minipage}
} & \multicolumn{3}{l|}{
\begin{minipage}[b]{0.6\columnwidth}
\smallskip
\raggedright Fundamental dominant weights \\ 
$
\begin{array}{lll}
\varphi_1 &= 2 \gamma_1 + 3 \gamma_2 + 4 \gamma_3 + 5 \gamma_4 + 6 \gamma_5 + 4 \gamma_6 + 2 \gamma_7 + 3 \gamma_8 \\ \vspace{-3mm} \\
\varphi_2 &= 3 \gamma_1 + 6 \gamma_2 + 8 \gamma_3 + 10 \gamma_4 + 12 \gamma_5 + 8 \gamma_6 + 4 \gamma_7 + 6 \gamma_8 \\ \vspace{-3mm} \\
\varphi_3 &= 4 \gamma_1 + 8 \gamma_2 + 12 \gamma_3 + 15 \gamma_4 + 18 \gamma_5 + 12 \gamma_6 + 6 \gamma_7 + 9 \gamma_8 \\ \vspace{-3mm} \\
\varphi_4 &= 5 \gamma_1 + 10 \gamma_2 + 15 \gamma_3 + 20 \gamma_4 + 24 \gamma_5 + 16 \gamma_6 + 8 \gamma_7 + 12 \gamma_8 \\ \vspace{-3mm} \\
\varphi_5 &= 6 \gamma_1 + 12 \gamma_2 + 18 \gamma_3 + 24 \gamma_4 + 30 \gamma_5 + 20 \gamma_6 + 10 \gamma_7 + 15 \gamma_8 \\ \vspace{-3mm} \\
\varphi_6 &= 4 \gamma_1 + 8 \gamma_2 + 12 \gamma_3 + 16 \gamma_4 + 20 \gamma_5 + 14 \gamma_6 + 7 \gamma_7 + 10 \gamma_8 \\ \vspace{-3mm} \\
\varphi_7 &= 2 \gamma_1 + 4 \gamma_2 + 6 \gamma_3 + 8 \gamma_4 + 10 \gamma_5 + 7 \gamma_6 + 4 \gamma_7 + 5 \gamma_8 \\ \vspace{-3mm} \\
\varphi_8 &= 3 \gamma_1 + 6 \gamma_2 + 9 \gamma_3 + 12 \gamma_4 + 15 \gamma_5 + 10 \gamma_6 + 5 \gamma_7 + 8 \gamma_8 
\end{array}
$

\end{minipage}
} \\


\hline
\hline
Vogan diagram & $\varphi$ & $\varphi \in \Delta$ & Type & s & $\dim V$ & $\dim G/V$ & $\mathfrak g$ & $\mathfrak v$ \\
\hline
\VoganDiag{black}{white}{white}{white}{white}{white}{white}{white}
& $ \varphi_1 $ & yes & sGT & $-3078$ & $134$ & $114$ & $\mathfrak e_{8(-24)}$ & $e_7 \oplus \mathbf R$ \\
\hline
\VoganDiag{white}{black}{white}{white}{white}{white}{white}{white}
& $ \varphi_2 $ & no & sGT & $-166$ & $82$ & $166$ & $\mathfrak e_{8(-24)}$ & $e_6 \oplus \mathfrak{su}(2) \oplus \mathbf R$ \\
\hline
\VoganDiag{white}{white}{black}{white}{white}{white}{white}{white}
& $ \varphi_3 $ & no & sGT & $-388$ & $54$ & $194$ & $\mathfrak e_{8(8)}$ & $\mathfrak{so}(10) \oplus \mathfrak{su}(3) \oplus \mathbf R$ \\
\hline
\VoganDiag{white}{white}{white}{black}{white}{white}{white}{white}
& $ \varphi_4 $ & no & sGT & $-208$ & $40$ & $208$ & $\mathfrak e_{8(8)}$ & $\mathfrak{su}(5) \oplus \mathfrak{su}(4) \oplus \mathbf R$\\
\hline
\VoganDiag{white}{white}{white}{white}{black}{white}{white}{white}
& $ \varphi_5 $ & no & sF & $212$ & $36$ & $212$ & $\mathfrak e_{8(-24)}$ & $\mathfrak{su}(5) \oplus \mathfrak{su}(3) \oplus \mathfrak{su}(2) \oplus \mathbf R$\\
\hline
\VoganDiag{white}{white}{white}{white}{white}{black}{white}{white}
& $ \varphi_6 $ & no & sF & $196$ & $52$ & $196$ & $\mathfrak e_{8(-24)}$ & $\mathfrak{su}(7) \oplus \mathfrak{su}(2) \oplus \mathbf R$\\
\hline
\VoganDiag{black}{white}{white}{white}{white}{black}{white}{white}
& $ \varphi_1  + \varphi_6 $ & no & sGT & $-208$ & $40$ & $208$ & $\mathfrak e_{8(8)}$ & $\mathfrak{su}(6) \oplus \mathfrak{su}(2) \oplus \mathbf R \oplus \mathbf R$\\
\hline
\VoganDiag{white}{white}{white}{white}{white}{white}{black}{white}
& $ \varphi_7 $ & no & sGT & $-1404$ & $92$ & $156$ & $\mathfrak e_{8(8)}$ & $\mathfrak{so}(14) \oplus \mathbf R$\\
\hline
\VoganDiag{white}{white}{white}{white}{white}{white}{white}{black}
& $ \varphi_8 $ & no & sGT & $-552$ & $64$ & $184$ & $\mathfrak e_{8(8)}$ & $\mathfrak{su}(8) \oplus \mathbf R$\\
\hline
\end{tabular}
\end{center}

\pagebreak

\normalsize

\end{document}